\numberwithin{equation}{section}
\newtheorem{theorem}{Theorem}[section]
\newtheorem{corollary}[theorem]{Corollary}
\newtheorem{lemma}[theorem]{Lemma}
\newtheorem{prop}[theorem]{Proposition}
\theoremstyle{definition}
\newtheorem{remark}[theorem]{Remark}
\theoremstyle{definition}
\theoremstyle{definition}
\def\dashint{\operatorname%
{\,\,\text{\bf-}\kern-.98em\DOTSI\intop\ilimits@\!\!}}
\def\\det{\text{det}}
\def\.5{\frac{1}{2}}
\newcommand{\RN}[1]{%
  \textup{\uppercase\expandafter{\romannumeral#1}}%
}
\renewcommand{\epsilon}{\varepsilon}
\newcounter{marnote}
\begin{document}
\title[Boundary Estimates for Lam\'{e} systems ]{Boundary blow-up analysis of gradient estimates for Lam\'{e} systems in the presence of $M$-convex hard inclusions}

\author[H.G. Li]{Haigang Li}
\address[H.G. Li]{School of Mathematical Sciences, Beijing Normal University, Beijing 100875, China. }
\email{hgli@bnu.edu.cn}

\author[Z.W. Zhao]{Zhiwen Zhao}

\address[Z.W. Zhao]{1. School of Mathematical Sciences, Beijing Normal University, Beijing 100875, China. }
\address{2. Bernoulli Institute for Mathematics, Computer Science and Artificial Intelligence, University of Groningen, PO Box 407, 9700 AK Groningen, The Netherlands.}
\email{zwzhao@mail.bnu.edu.cn, Corresponding author.}

\date{\today} 



\begin{abstract}
In high-contrast elastic composites, it is vitally important to investigate the stress concentration from an engineering point of view. The purpose of this paper is to show that the blowup rate of the stress depends not only on the shape of the inclusions, but also on the given boundary data, when hard inclusions are close to matrix boundary. First, when the boundary of inclusion is partially relatively parallel to that of matrix, we establish the gradient estimates for Lam\'{e} systems with partially infinite coefficients and find that they are bounded for some boundary data $\varphi$ while some $\varphi$ will increase the blow-up rate. In order to identify such novel blowup phenomenon, we further consider the general $m$-convex inclusion cases and uncover the dependence of blow-up rate on the inclusion's convexity $m$ and the boundary data's order of growth $k$ in all dimensions. In particular, the sharpness of these blow-up rates is also presented for some prescribed boundary data.
\end{abstract}

\maketitle



\section{Introduction and main results}

In high-contrast fiber-reinforced composites the densely packed fibers usually cause various of physical field concentration during use, such as the electric field and the stress field. Here we consider the stress concentration in the context of linear elasticity. The closeness of fibers complicates the problem much more. Even numerical computation of the concentrated field is difficult, since the numerical difficulties come from singularities in the derivative. So, understanding and controlling such singularity phenomenon become a significant challenge in theoretical and numerical analysis. In the past two decades, especially since Babu\u{s}ka et al's famous work \cite{BASL1999}, it has attracted a lot of interest in the engineering and mathematical literature. It is well known that  the high concentration of extreme mechanical loads or extreme electric field may appear not only in the gaps between inclusions but also in between inclusion and the matrix boundary.

In the context of electrostatics (or anti-plane elasticity), the field is the gradient of a solution to the Laplace equation and the singularities of the gradient were captured accurately. In order to investigate the important role of the closeness in such blow-up analysis, we assume the distance between two inclusions is $\varepsilon$. It is verified that when the conductivity of the inclusions degenerates to $\infty$, the blow-up rate of the gradient is $\varepsilon^{-1/2}$ in two dimensions \cite{BC1984,BLY2009,AKL2005,Y2007,Y2009}, and it is $|\varepsilon\ln\varepsilon|^{-1}$ in three dimensions \cite{BLY2009,LY2009}. While these works are related to the estimate of the blow-up rate of the gradient, there is another direction of research to characterize the singular behavior of the gradient \cite{KLY2013,ACKLY2013,KLY2014,KLY2015,LWX2019,LY2015} by introducing an explicit singularity function. For more related literature, see \cite{BLY2010,ABTV2015,BT2012,BT2013,BV2000,BCN2013,DL2019,DZ2016,K1993,LL2017,AKLLZ2009,G2015,L2012,LLBY2014,M1996}.

Even though much progress has been made for the Laplace equation of the anti-plane elasticity as mentioned above, not much is known about the gradient blow-up in the context of the full elasticity, for example, the Lam\'e system. Because the maximum principle does not hold for the system, the known methods for scalar equations can not be directly applied to Lam\'e system. Recently, the first author \cite{BLL2015,BLL2017}, collaborated with Bao and Li, obtains a pointwise upper bound of $\nabla u$ in the narrow region, by making use of an iteration technique with respect to the energy to overcome this difficulty. They prove that for $2$-convex inclusions, the optimal blow-up rate of $|\nabla u|$ is $\varepsilon^{-1/2}$ in dimension two, $(\varepsilon|\ln\varepsilon|)^{-1}$ and $\varepsilon^{-1}$ in dimensions $n=3$ and $n\geq4$, respectively, by a lower bound estimate established in \cite{Li2018}. We here would like to remark that Kang and Yu \cite{KY2019} introduce singular functions constructed by nuclei of strain, which characterize precisely the singularities of the stress in the narrow region between two hard inclusions in dimension two.

These results mentioned above can be regarded as interior estimates on the interaction between two close inclusions. In this paper, we establish the boundary gradient estimates of solutions to Lam\'e when hard inclusions approach the matrix exterior boundary. Our results give the optimal blow-up rates of the stress in all dimensions for inclusions of arbitrary shape, including the $m$-convex inclusions, the precise definition given below, and the ``relatively flat" inclusions. This is a continuation of \cite{BJL2017,HL}, where in \cite{HL} the boundedness of the interior gradient for two nearly touching inclusions with the ``flat" boundaries is derived while in \cite{BJL2017} the boundary estimates for $2$-convexity inclusions close to boundary are studied.

Besides the free boundary value feature of this problem, due to the interaction from given boundary data on the exterior boundary, solutions will become more irregular near the boundary. In fact, even the boundary and boundary value are both smooth, it may cause a definite increase of the blow-up rate of the gradient comparing with the known interior estimates \cite{HL}. The objectives of this paper are to investigate the influence on the blow-up rate from two kinds of parameters: $(a)$ $m$, the order of domains convexity, namely, the order of the relative convexity between inclusions and matrix's exterior boundary; $(b)$ $k$, the order of growth of boundary data $\varphi$. These estimates allow us to understand completely the underlying mechanism of such stress concentration phenomenon in high-contrast composite materials.

To formulate our main results precisely, we first describe our domain and notations. Let $D\subset\mathbb{R}^{n}\,(n\geq2)$ be a bounded open set with $C^{2,\alpha}~(0<\alpha<1)$ boundary, having a subdomain $D_{1}^{\ast}\subseteq D$ with $C^{2,\alpha}$ boundary as well. We assume that $D$ and $D_{1}^{\ast}$ have a part of common boundary $\Sigma'$, that is, by a translation and rotation of the coordinates, if necessary,
\begin{align*}
\partial D_{1}^{\ast}\cap\partial D=\Sigma'\subset\mathbb{R}^{n-1}.
\end{align*}
Here $\Sigma'$ is a bounded convex domain with the origin as its mass center or $\Sigma'=\{0'\}$. Throughout the paper, we use superscript prime to denote ($n-1$)-dimensional domains and variables, such as $\Sigma'$ and $x'$. After a translation, we set
\begin{align*}
D_{1}^{\varepsilon}:=D_{1}^{\ast}+(0',\varepsilon).
\end{align*}
For the sake of simplicity, denote
\begin{align*}
D_{1}:=D_{1}^{\varepsilon},\quad\mathrm{and}\quad\Omega:=D\setminus\overline{D}_{1}.
\end{align*}

Assume that $\Omega$ and $D_{1}$ are occupied, respectively, by two different isotropic and homogeneous materials with different Lam$\mathrm{\acute{e}}$ constants $(\lambda,\mu)$ and $(\lambda_{1},\mu_{1})$. Then the elasticity tensors for the background and the inclusion, $\mathbb{C}^0$ and $\mathbb{C}^1$, respectively are
$$C_{ijkl}^0=\lambda\delta_{ij}\delta_{kl} +\mu(\delta_{ik}\delta_{jl}+\delta_{il}\delta_{jk}),$$
and
$$C_{ijkl}^1=\lambda_1\delta_{ij}\delta_{kl} +\mu_1(\delta_{ik}\delta_{jl}+\delta_{il}\delta_{jk}),$$
where $i, j, k, l=1,2,\cdots,n$ and $\delta_{ij}$ is the kronecker symbol: $\delta_{ij}=0$ for $i\neq j$, $\delta_{ij}=1$ for $i=j$.

Let $u=(u^{1},u^{2},\cdots,u^{n})^{T}:D\rightarrow\mathbb{R}^{n}$ be the displacement field, $\chi_{\Omega}$ be the characteristic function of $\Omega\subset\mathbb{R}^{n}$. Given a vector-valued function $\varphi=(\varphi^{1},\varphi^{2},\cdots,\varphi^{n})^{T}$, we consider the Dirichlet problem for the Lam$\mathrm{\acute{e}}$ system
\begin{align}\label{La.001}
\begin{cases}
\nabla\cdot \left((\chi_{\Omega}\mathbb{C}^0+\chi_{D_{1}}\mathbb{C}^1)e(u)\right)=0,&\hbox{in}~~D,\\
u=\varphi, &\hbox{on}~~\partial{D},
\end{cases}
\end{align}
where
$$e(u)=\frac{1}{2}\left(\nabla u+(\nabla u)^{T}\right)$$
is the strain tensor.
Under the assumption of the standard ellipticity condition for (\ref{La.001}), that is,
\begin{align*}
\mu>0,\quad n\lambda+2\mu>0,\quad \mu_1>0,\quad n\lambda_1+2\mu_1>0,
\end{align*}
for a given $\varphi\in H^{1}(D;\mathbb{R}^{n})$, it is well know that there is a unique solution $u\in H^{1}(D;\mathbb{R}^{n})$ for the Dirichlet problem (\ref{La.001}), which minimizes the following energy functional
$$J_1[u]:=\frac{1}{2}\int_\Omega \left((\chi_{\Omega}\mathbb{C}^0+\chi_{D_{1}}\mathbb{C}^1)e(u), e(u)\right)dx$$
in the admissible function space $H^1_\varphi(D; \mathbb{R}^{n}):=\left\{u\in  H^1(D; \mathbb{R}^{n})~\big|~ u-\varphi\in  H^1_0(D; \mathbb{R}^{n})\right\}.$

Let
$$\Psi:=\{\psi\in C^1(\mathbb{R}^{n}; \mathbb{R}^{n})\ |\ \nabla\psi+(\nabla\psi)^T=0\}$$
be the linear space of rigid displacement in $\mathbb{R}^{n}$. We know that
$$\left\{\;e_{i},\;x_{k}e_{j}-x_{j}e_{k}\;\big|\;1\leq\,i\leq\,n,\;1\leq\,j<k\leq\,n\;\right\}$$
forms a basis of $\Psi$, where $\{e_{1},\cdots,e_{n}\}$ is the standard basis of $\mathbb{R}^{n}$. We number this basis as $\left\{\psi_{\alpha}\big|\,\alpha=1,2,\cdots,\frac{n(n+1)}{2}\right\}$.

For fixed $\lambda$ and $\mu$, we denote by $u_{\lambda_{1},\mu_{1}}$ the solution of (\ref{La.001}). Similar to the appendix in \cite{BLL2015}, it follows that
\begin{align*}
u_{\lambda_1,\mu_1}\rightarrow u\quad\hbox{in}\ H^1(D; \mathbb{R}^{n}),\quad \hbox{as}\ \min\{\mu_1, n\lambda_1+2\mu_1\}\rightarrow\infty,
\end{align*}
where $u\in H^1(D; \mathbb{R}^{n})$ verifies the following free boundary value problem
\begin{align}\label{La.002}
\begin{cases}
\mathcal{L}_{\lambda, \mu}u:=\nabla\cdot(\mathbb{C}^0e(u))=0,\quad&\hbox{in}\ \Omega,\\
u=C^{\alpha}\psi_{\alpha},&\hbox{on}\ \partial{D}_{1},\\
\int_{\partial{D}_{1}}\frac{\partial u}{\partial \nu_0}\Big|_{+}\cdot\psi_{\alpha}=0,&\alpha=1,2,\cdots,\frac{n(n+1)}{2},\\
u=\varphi,&\hbox{on}\ \partial{D},
\end{cases}
\end{align}
where the free constants $C^{\alpha}$ are determined later by the third line and
\begin{align*}
\frac{\partial u}{\partial \nu_0}\Big|_{+}&:=(\mathbb{C}^0e(u))\nu=\lambda(\nabla\cdot u)\nu+\mu(\nabla u+(\nabla u)^T)\nu,
\end{align*}
and $\nu$ is the unit outer normal of $D_{1}$. Here and throughout this paper the subscript $\pm$ shows the limit from outside and inside the domain, respectively. There has established the existence, uniqueness and regularity of weak solutions to (\ref{La.002}) in \cite{BLL2015}. The $H^{1}$ weak solution of (\ref{La.002}) is proved in $C^1(\overline{\Omega};\mathbb{R}^{n})\cap C^1(\overline{D}_{1};\mathbb{R}^{n})$ and $u$ is also the unique function minimizing the energy functional as follows:
$$I_\infty[u]:=\min_{v\in\mathcal{A}}\frac{1}{2}\int_{\Omega}(\mathbb{C}^0e(v), e(v))\,dx,$$
where
\begin{equation*}\label{def_A}
\mathcal{A}:=\left\{v\in H^1_\varphi(D;\mathbb{R}^{n}) ~\Big|~ e(v)=0\ \ \hbox{in}~~D_{1}\right\}.
\end{equation*}
Now we assume that there exists a small constant $R>0$, independent of $\varepsilon$, such that $\Sigma'\subset B_{R}'$ and the top and bottom boundaries corresponding to the narrow region between $D_{1}$ and $D$ can be described as follows:
\begin{align*}
x_{n}=\varepsilon+h_{1}(x')\quad\mathrm{and}\quad x_{n}=h(x'),\quad\quad x'\in\;B_{2R}'.
\end{align*}
Let $d(x'):=\mathrm{dist}(x',\Sigma')$, and $h_{1}$ and $h$ satisfy
\begin{enumerate}
{\it\item[(\bf{\em H1})]
$h_{1}(x')=h(x')\equiv0,\;\mbox{if}\;\,x'\in\Sigma',$
\item[(\bf{\em H2})]
$\kappa_{1}d^{m}(x')\leq h_{1}(x')-h(x')\leq\kappa_{2}d^{m}(x'),\;\mbox{if}\;\,x'\in B'_{2R}\setminus\overline{\Sigma'},$
\item[(\bf{\em H3})]
$|\nabla_{x'}h_{1}(x')|,\,|\nabla_{x'}h(x')|\leq \kappa_{3}d^{m-1}(x'),\;\mbox{if}\;\,x'\in B_{2R}',$
\item[(\bf{\em H4})]
$\|h_{1}\|_{C^{2,\alpha}(B'_{2R})}+\|h\|_{C^{2,\alpha}(B'_{2R})}\leq \kappa_{4},$}
\end{enumerate}
where $\kappa_{i},i=1,2,3,4$, are four positive constants independent of $\varepsilon$. When $\Sigma'$ is a bounded convex region, we consider $m>2$; when $\Sigma'=\{0'\}$, we consider $m\geq2$.

For $z'\in B'_{R},\,0<t\leq2R$, let
\begin{align*}
\Omega_{t}(z'):=&\left\{x\in \mathbb{R}^{n}~\big|~h(x')<x_{n}<\varepsilon+h_{1}(x'),~|x'-z'|<{t}\right\}.
\end{align*}
We will use the abbreviated notation $\Omega_{t}$ for the domain $\Omega_{t}(0')$. For the simplicity of notations, for $i=0,2$, and $i=k,k+1$, $k$ is a positive integer, we denote
\begin{align*}
\rho_{i}(n,m;\varepsilon)=&
\begin{cases}
\varepsilon^{\frac{n+i-1}{m}-1},&m>n+i-1,\\
|\ln\varepsilon|,&m=n+i-1,\\
1,&m<n+i-1.
\end{cases}
\end{align*}

Before stating the main results, we introduce a family of linear functionals of $\varphi$,
$$Q_{\beta}[\varphi]:=\int_{\partial D_{1}}\frac{\partial u_{0}}{\partial\nu_{0}}\Big|_{+}\cdot\psi_{\beta},\quad\quad\beta=1,2,\cdots,\frac{n(n+1)}{2},$$
where $u_{0}\in C^{1}(\overline{\Omega};\mathbb{R}^{n})\cap C^{2}(\Omega;\mathbb{R}^{n})$ solves the following problem:
\begin{align}\label{Lek2.012}
\begin{cases}
\mathcal{L}_{\lambda,\mu}u_{0}=0,\quad\quad\;\,&\mathrm{in}\;\,\Omega,\\
u_{0}=0,\quad\quad\;\,&\mathrm{on}\;\,\partial D_{1},\\
u_{0}=\varphi(x),\quad\;\,&\mathrm{on}\;\,\partial D,
\end{cases}
\end{align}
where $\varphi\in C^{2}(\partial D;\mathbb{R}^{n})$. Let
\begin{align}\label{NCE1}
Q_{\beta;\mathrm{I}}[\varphi]=\max\limits_{1\leq\beta\leq n}|Q_{\beta}[\varphi]|,\quad Q_{\beta;\mathrm{II}}[\varphi]=\max\limits_{n+1\leq\beta\leq\frac{n(n+1)}{2}}|Q_{\beta}[\varphi]|.
\end{align}
In addition, assume that for some $\delta_{0}>0$,
\begin{align*}
\delta_{0}\leq\mu,n\lambda+2\mu\leq\frac{1}{\delta_{0}}.
\end{align*}
Unless otherwise stated, in what following $C$ denotes a constant, whose values may vary from line to line, depending only on $\kappa_{1},\kappa_{2},\kappa_{3},\kappa_{4},\delta_{0},R$ and an upper bound of the $C^{2,\alpha}$ norms of $\partial D_{1}$ and $\partial D$, but not on $\varepsilon$. We also call a constant having such dependence a $universal$ $constant$. Without loss of generality, we let $\varphi(0)=0$. Otherwise, we replace $u$ by $u-\varphi(0)$ throughout this paper. For simplicity of discussion, we assume that convexity index $m$ and growth order index $k$ are all positive integers in the following.

\begin{theorem}\label{Lthm066}
Assume that $D_{1}\subset D\subseteq\mathbb{R}^{n}\,(n\geq2)$ are defined as above, conditions ({\bf{\em H1}})--({\bf{\em H4}}) hold with $\Sigma'=B'_{r}(0')$ for some $0<r<R$, and $\varphi\in C^{2}(\partial D;\mathbb{R}^{n})$. Let $u\in H^{1}(D;\mathbb{R}^{n})\cap C^{1}(\overline{\Omega},\mathbb{R}^{n})$ be the solution of (\ref{La.002}). Then for a sufficiently small $\varepsilon>0$, for $x\in\Omega_{R}$,
\begin{align}\label{LGT316}
|\nabla u(x)|\leq C\Bigg(&\frac{Q_{\beta;\mathrm{I}}[\varphi]}{\varepsilon+d^{m}(x')}\frac{\varepsilon}{|\Sigma'|+\varepsilon\rho_{0}(n,m;\varepsilon)}+\frac{|\varphi(x',h(x'))|}{\varepsilon+d^{m}(x')}\notag\\
&+\frac{Q_{\beta;\mathrm{II}}[\varphi]}{\varepsilon+d^{m}(x')}\frac{(\varepsilon+|x'|)\varepsilon}{|\Sigma'|^{\frac{n+1}{n-1}}+\varepsilon\rho_{2}(n,m;\varepsilon)}+\|\varphi\|_{C^{2}(\partial D)}\Bigg),
\end{align}
and
\begin{align*}
\|\nabla u\|_{L^{\infty}(\Omega\setminus\Omega_{R})}\leq C\|\varphi\|_{C^{2}(\partial D)},
\end{align*}
where $d(x'):=\mathrm{dist}(x',\Sigma')$.

\end{theorem}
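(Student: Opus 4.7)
My plan is to follow the decomposition/linearization strategy pioneered in \cite{BLL2015,BLL2017,BJL2017} and adapted to the present boundary-blowup setting. For $\alpha=1,\dots,n(n+1)/2$, introduce the auxiliary vector fields $v_{\alpha}\in C^{2}(\overline{\Omega};\mathbb{R}^{n})$ solving $\mathcal{L}_{\lambda,\mu}v_{\alpha}=0$ in $\Omega$, with $v_{\alpha}=\psi_{\alpha}$ on $\partial D_{1}$ and $v_{\alpha}=0$ on $\partial D$, and let $u_{0}$ be the solution of \eqref{Lek2.012}. Then uniqueness for (\ref{La.002}) gives the decomposition
\begin{equation*}
u(x)=\sum_{\alpha=1}^{n(n+1)/2}C^{\alpha}v_{\alpha}(x)+u_{0}(x),\qquad x\in\overline{\Omega},
\end{equation*}
so that $|\nabla u|\le\sum_{\alpha}|C^{\alpha}||\nabla v_{\alpha}|+|\nabla u_{0}|$. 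The task therefore splits cleanly into (i) pointwise bounds on $\nabla v_{\alpha}$ in $\Omega_{R}$, (ii) an asymptotic evaluation of the ``capacitance'' matrix $a_{\alpha\beta}:=\int_{\partial D_{1}}\partial_{\nu_{0}}v_{\alpha}|_{+}\cdot\psi_{\beta}$ determining the $C^{\alpha}$, and (iii) a pointwise bound on $\nabla u_{0}$ in the narrow region.

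\textbf{Pointwise bounds for $v_{\alpha}$.} For each $\alpha$, construct an explicit extension $\bar v_{\alpha}$ interpolating linearly in $x_{n}$ between $\psi_{\alpha}$ on the top boundary $x_{n}=\varepsilon+h_{1}(x')$ and $0$ on the bottom boundary $x_{n}=h(x')$; direct computation then gives $|\nabla\bar v_{\alpha}(x)|\le C(\varepsilon+d^{m}(x'))^{-1}$ for $\alpha=1,\dots,n$ and $|\nabla\bar v_{\alpha}(x)|\le C(\varepsilon+|x'|)(\varepsilon+d^{m}(x'))^{-1}$ for $\alpha=n+1,\dots,n(n+1)/2$, together with $|\mathcal{L}_{\lambda,\mu}\bar v_{\alpha}|$ under control. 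Writing $w_{\alpha}:=v_{\alpha}-\bar v_{\alpha}$ (which vanishes on the top/bottom of $\Omega_{R}$) and carrying out the De~Giorgi--Moser style iteration on the energy $\int_{\Omega_{t}(z')}|\nabla w_{\alpha}|^{2}$, exactly as in \cite[Sect.~4]{BLL2015} but with the $m$-convex geometry forcing $\rho_{0},\rho_{2}$ into the normalizations, upgrades the mean estimate to the pointwise bound $|\nabla w_{\alpha}|\le|\nabla\bar v_{\alpha}|+(\text{lower order})$. This delivers
\begin{equation*}
|\nabla v_{\alpha}(x)|\le\frac{C}{\varepsilon+d^{m}(x')},\ \ \alpha\le n,\qquad |\nabla v_{\alpha}(x)|\le\frac{C(\varepsilon+|x'|)}{\varepsilon+d^{m}(x')},\ \ \alpha>n.
\end{equation*}

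\textbf{Capacitance asymptotics and solving for $C^{\alpha}$.} Using $\psi_{\alpha}$ itself as a test function, $a_{\alpha\beta}=\int_{\Omega}(\mathbb{C}^{0}e(v_{\alpha}),e(v_{\beta}))\,dx$, so $a_{\alpha\beta}$ is symmetric and positive definite. Splitting the integral into $\Omega_{R}$ plus the far region, substituting the explicit form $\bar v_{\alpha}$, and integrating in $x_{n}$ first on $\Sigma'$ (where the height is exactly $\varepsilon$) and on $B_{2R}'\setminus\Sigma'$ (where the height is $\varepsilon+O(d^{m})$), I expect the leading behaviour $a_{\alpha\alpha}\asymp\varepsilon^{-1}(|\Sigma'|+\varepsilon\rho_{0}(n,m;\varepsilon))$ for $\alpha\le n$ and $a_{\alpha\alpha}\asymp\varepsilon^{-1}(|\Sigma'|^{(n+1)/(n-1)}+\varepsilon\rho_{2}(n,m;\varepsilon))$ for $\alpha>n$, while off-diagonal entries mixing the two blocks are of strictly lower order (by parity in $x'$ over the mass-centered $\Sigma'$). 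The orthogonality relations in (\ref{La.002}) read $\sum_{\alpha}a_{\alpha\beta}C^{\alpha}=-Q_{\beta}[\varphi]$; inverting the (asymptotically block-diagonal) matrix produces
\begin{equation*}
|C^{\alpha}|\le\frac{C\,\varepsilon\,Q_{\beta;\mathrm{I}}[\varphi]}{|\Sigma'|+\varepsilon\rho_{0}(n,m;\varepsilon)}\ (\alpha\le n),\qquad |C^{\alpha}|\le\frac{C\,\varepsilon\,Q_{\beta;\mathrm{II}}[\varphi]}{|\Sigma'|^{(n+1)/(n-1)}+\varepsilon\rho_{2}(n,m;\varepsilon)}\ (\alpha>n).
\end{equation*}

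\textbf{The inhomogeneous piece $u_{0}$ and conclusion.} For $u_{0}$, construct the linear interpolant $\bar u_{0}(x)=\varphi(x',h(x'))\,(\varepsilon+h_{1}(x')-x_{n})/(\varepsilon+h_{1}(x')-h(x'))$, which matches the boundary data on $\partial D\cap\partial\Omega_{R}$ and vanishes on $\partial D_{1}\cap\partial\Omega_{R}$, and yields $|\nabla\bar u_{0}(x)|\le C|\varphi(x',h(x'))|/(\varepsilon+d^{m}(x'))+C\|\varphi\|_{C^{1}(\partial D)}$. Another energy iteration on $u_{0}-\bar u_{0}$ transfers this bound to $\nabla u_{0}$ itself, while standard interior/boundary elliptic regularity gives $\|\nabla u_{0}\|_{L^{\infty}(\Omega\setminus\Omega_{R})}\le C\|\varphi\|_{C^{2}(\partial D)}$; the latter also controls the far-field term $\|\nabla u\|_{L^{\infty}(\Omega\setminus\Omega_{R})}$ via the $C^{\alpha}$ bounds and the smoothness of $v_{\alpha}$ away from $\Sigma'$. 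Summing the three contributions from the decomposition produces exactly (\ref{LGT316}).

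\textbf{Main obstacle.} The delicate step is the asymptotic analysis of the capacitance matrix $(a_{\alpha\beta})$ in the regime where $\Sigma'$ has positive $(n-1)$-measure together with an $m$-convex transition region: one must simultaneously track the flat contribution $|\Sigma'|/\varepsilon$, the curved tail contribution $\rho_{0}$ or $\rho_{2}$, and show that the off-diagonal/translation-rotation couplings are genuinely lower order so that inversion produces the two distinct normalizations appearing in (\ref{LGT316}). Getting the correct power $(n+1)/(n-1)$ in the rotational normalization—which comes from the $|x'|^{2}$ weight of rotations integrated over $\Sigma'$—is where the calculation has to be done with care.
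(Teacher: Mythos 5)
Your proposal follows essentially the same route as the paper's own proof: the same decomposition $u=\sum_{\alpha}C^{\alpha}u_{\alpha}+u_{0}$, the same construction of explicit auxiliary interpolants and iteration-based pointwise gradient bounds for $u_{\alpha}$ and $u_{0}$, the same energy formula for the capacitance matrix $a_{\alpha\beta}$ with asymptotically block-diagonal structure, and the same inversion to bound the $C^{\alpha}$. One small correction: the paper takes $a_{\alpha\beta}:=-\int_{\partial D_{1}}\partial_{\nu_{0}}u_{\alpha}\big|_{+}\cdot\psi_{\beta}$ (note the minus sign) so that the energy identity yields positivity and the linear system reads $\sum_{\alpha}C^{\alpha}a_{\alpha\beta}=Q_{\beta}[\varphi]$ without the minus sign you wrote; also, the off-diagonal smallness is not obtained by parity alone but by combining the tangential gradient bound $|\nabla_{x'}u_{\alpha}|\leq C(\varepsilon+d^{m}(x'))^{-1/m}$ with $|\nu_{i}|\leq Cd^{m-1}(x')$ and the height bound $|x_{n}|\leq C(\varepsilon+d^{m}(x'))$, yielding $|a_{\alpha\beta}|\leq C\rho_{n}(\varepsilon,|\Sigma'|)$ as in the paper's Lemma on capacitance asymptotics.
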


\begin{remark}
If $\varphi=C$ on $\partial D$, then $u\equiv C$. So Theorem \ref{Lthm066} is trivial. Thus, we only consider the case that $\varphi\not\equiv C$ in the following. Denote the bottom boundary of $\Omega_{r}$ by
\begin{align*}
\Gamma^{-}_{r}=\left\{x\in\mathbb{R}^{n}\big|\,x_{n}=h(x'),\;|x'|<r\right\},\quad\;\,0<r\leq2R.
\end{align*}
In Theorem \ref{Lthm066}, if we assume additionally
\begin{align*}
|\varphi(x)|\leq \eta\, d^{k}(x'),\quad\;\,\mathrm{on}\;\Gamma^{-}_{R},\;\mathrm{for}\;k>2,\eta\geq0,
\end{align*}
then by using the proof of Proposition \ref{lekm323} below with minor modification, in particular, we can obtain that if $m<k+1$,
\begin{align*}
|Q_{\beta}[\varphi]|\leq&C\Big(\eta(1+|\Sigma'|^{\frac{n-2}{n-1}})+\|\varphi\|_{C^{2}(\partial D)}\Big),\quad\;\,\mathrm{for}\;\beta=1,2,\cdots,n,
\end{align*}
and
\begin{align*}
|Q_{\beta}[\varphi]|\leq&C\Big(\eta(1+|\Sigma'|)+\|\varphi\|_{C^{2}(\partial D)}\Big),\quad\;\,\mathrm{for}\;\beta=n+1,\cdots,\frac{n(n+1)}{2}.
\end{align*}
Consequently, the upper bound in (\ref{LGT316}) is bounded provided $|\Sigma'|>0$,
\begin{align*}
\|\nabla u\|_{L^{\infty}(\Omega)}\leq&C\left(\frac{\eta(1+|\Sigma'|^{\frac{n-2}{n-1}})+\|\varphi\|_{C^{2}(\partial D)}}{|\Sigma'|+\varepsilon\rho_{0}(n,m;\varepsilon)}+\frac{\eta(1+|\Sigma'|)+\|\varphi\|_{C^{2}(\partial D)}}{|\Sigma'|^{\frac{n+1}{n-1}}+\varepsilon\rho_{2}(n,m;\varepsilon)}\right),
\end{align*}
which means that there is no blow-up for $|\nabla u|$.

\end{remark}
\begin{remark}
We would like to point out that here $Q_{\beta}[\varphi]$ play more roles than in \cite{HL} for interior estimates. As shown in \cite{HL}, there $Q_{\beta}[\varphi]$ is bounded and it is only a blow-up factor to determine whether blow-up occurs or not. While, in our boundary estimate case, $Q_{\beta}[\varphi]$ will increase singularity for some $\varphi$ and make the blowup rate larger than before. We can see this from the following simple example. For $i=1,2,\cdots,n$, let
\begin{align*}
\varphi^{i}(x)=&\eta|x'|^{k}\quad\;\,\mathrm{on}\;\Gamma^{-}_{R},\;\mathrm{for}\;k\geq2,\,\eta>0,
\end{align*}
and
\begin{align*}
h_{1}(x')-h(x')=&d^{m}(x')\quad\;\,\mathrm{in}\;B'_{R}.
\end{align*}
Then, it follows from the proof of Lemma \ref{KM323} below with minor modification that for $\beta=1,2,\cdots,n$,
\begin{align*}
\frac{\eta(|\Sigma'|^{\frac{n+k-1}{n-1}}+\varepsilon\rho_{k}(n,m;\varepsilon))}{C\varepsilon}\leq|Q_{\beta}[\varphi]|\leq&\frac{C\eta(|\Sigma'|^{\frac{n+k-1}{n-1}}+\varepsilon\rho_{k}(n,m;\varepsilon))}{\varepsilon}.
\end{align*}
In the following we explore the singularities of $Q_{\beta}[\varphi]$ arising from different classes of the boundary data $\varphi$. For simplicity, we consider the case that $\Sigma'=\{0'\}$.
\end{remark}

In order to classify the effect of $\varphi=(\varphi^{1},\varphi^{2},\cdots,\varphi^{n})$ on the singularities of the stress, we further assume that
\begin{itemize}
{\it
\item[(\bf{\em H5})] $h_{1}(x')-h(x')$ is even with respect to each $x_{j}$, $j=1,\cdots,n-1$, for $|x'|<R$.}
\end{itemize}
We now classify the given boundary data $\varphi$ according to its parity as follows. Unless otherwise stated, in the following we let $\varphi^{i}(x)\neq0$ on $\Gamma^{-}_{R}$, $i=1,2,\cdots,n$. Assume that for $x\in\Gamma^{-}_{R}$,
\begin{itemize}
{\it
\item[(\bf{\em A1})] for $i=1,2,\cdots,n$, $j=1,\cdots,n-1$, $\varphi^{i}(x)$ is an even function of each $x_{j}$;
\item[(\bf{\em A2})] if $n=2$, for $i=1,2$, $\varphi^{i}(x)$ is odd with respect to $x_{1}$; if $n\geq3$, for $i=1,\cdots,n-1$, $\varphi^{i}(x)$ is odd with respect to some $x_{j_{i}}$, $j_{i}\in\{1,\cdots,n-1\}$, and $\varphi^{n}(x)$ is odd with respect to $x_{1}$ and even with respect to each $x_{j}$, $j=2,\cdots,n-1$;
\item[(\bf{\em A3})] if $n=2$, $\varphi^{1}(x)$ is odd with respect to $x_{1}$, and $\varphi^{2}(x)=0$; if $n\geq3$, for $i=1,\cdots,n-1$, $\varphi^{i}(x)$ is odd with respect to $x_{i}$, and $\varphi^{n}(x)$ is odd with respect to $x_{1}$ and $x_{2}$, respectively.}
\end{itemize}

Then, we have
\begin{prop}\label{lekm323}
Assume that $D_{1}\subset D\subseteq\mathbb{R}^{n}\,(n\geq2)$ are defined as above and conditions ({\bf{\em H1}})--({\bf{\em H5}}) hold with $\Sigma'=\{0'\}$. Let $u_{0}\in C^{1}(\overline{\Omega};\mathbb{R}^{n})\cap C^{2}(\Omega;\mathbb{R}^{n})$ is the solution to (\ref{Lek2.012}). Assume that ({\bf{\em A1}}), ({\bf{\em A2}}) or ({\bf{\em A3}}) holds. If $\varphi\in C^{2}(\partial D;\mathbb{R}^{n})$ satisfies the $k$-order growth condition,
\begin{align}
|\varphi(x)|\leq \eta\,|x|^{k},\quad\;\,\mathrm{on}\;\Gamma^{-}_{R},\label{zw11}
\end{align}
for some integer $k>0$ and a positive constant $\eta$. Then, for a sufficiently small $\varepsilon>0$, we obtain that
\begin{align}\label{HND1112}
|Q_{\beta}[\varphi]|\leq&C(\eta\rho_{0}(n,m;\varepsilon)\rho_{A}(\varepsilon)+\|\varphi\|_{C^{2}(\partial D)}),\quad\;\,\beta=1,2,\cdots,n,
\end{align}
and
\begin{align}\label{HND1116}
|Q_{\beta}[\varphi]|\leq&C(\eta\rho_{2}(n,m;\varepsilon)\rho_{B}(\varepsilon)+\|\varphi\|_{C^{2}(\partial D)}),\quad\;\,\beta=n+1,\cdots,\frac{n(n+1)}{2},
\end{align}
where
\begin{align}
\rho_{A}(\varepsilon)=&
\begin{cases}
\rho_{k}(n,m;\varepsilon)/\rho_{0}(n,m;\varepsilon),&\text{for\;case\;({\bf{\em A1}})},\\
1/\rho_{0}(n,m;\varepsilon),&\text{otherwise},
\end{cases}\label{MNM86}\\
\rho_{B}(\varepsilon)=&\begin{cases}
\rho_{k+1}(n,m;\varepsilon)/\rho_{2}(n,m;\varepsilon),&\text{for\;case\;({\bf{\em A2}})},\\
1/\rho_{2}(n,m;\varepsilon),&\text{otherwise}.
\end{cases}\label{MNM87}
\end{align}

\end{prop}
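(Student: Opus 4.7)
The plan is to reduce $Q_\beta[\varphi]$ to an explicit boundary integral against a ``dual'' auxiliary function and then exploit the symmetry of the narrow region (assumption (H5)) together with the parity assumptions on $\varphi$ to produce the required cancellations. \textbf{Step 1 (Duality).} For each $\beta$, introduce the dual function $v_\beta\in C^{1}(\overline\Omega;\mathbb{R}^n)$ solving $\mathcal{L}_{\lambda,\mu}v_\beta=0$ in $\Omega$, $v_\beta=\psi_\beta$ on $\partial D_1$, and $v_\beta=0$ on $\partial D$. Applying Green's second identity to the pair $(u_0,v_\beta)$, in view of the boundary data of both functions, yields
\begin{align*}
Q_\beta[\varphi] = -\int_{\partial D}\frac{\partial v_\beta}{\partial\nu_0}\cdot\varphi\,dS.
\end{align*}
This transfers the problem to the family $\{v_\beta\}$, whose singular behavior in the narrow region has been quantified in the authors' companion work \cite{HL} (building on \cite{BLL2015,BLL2017}).

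\textbf{Step 2 (Splitting and direct bound).} Split $\partial D$ into the narrow-region bottom $\Gamma^-_R$ and its complement. Away from $\Sigma'$, $v_\beta$ is uniformly $C^{1,\alpha}$, so that piece of the integral contributes at most $C\|\varphi\|_{C^2(\partial D)}$. On $\Gamma^-_R$, invoke the pointwise estimates
\begin{align*}
\left|\frac{\partial v_\beta}{\partial\nu_0}\right|\le \frac{C}{\varepsilon+|x'|^m}\ (\beta\le n),\qquad \left|\frac{\partial v_\beta}{\partial\nu_0}\right|\le \frac{C|x'|}{\varepsilon+|x'|^m}\ (\beta\ge n+1),
\end{align*}
and combine them with (\ref{zw11}); a polar-coordinate computation gives
\begin{align*}
\int_{B'_R}\frac{|x'|^k}{\varepsilon+|x'|^m}\,dx'\le C\rho_k(n,m;\varepsilon),
\end{align*}
with $|x'|^{k+1}$ replacing $|x'|^k$ in the rotation case and producing $\rho_{k+1}(n,m;\varepsilon)$. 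Under (A1) no helpful parity cancellation is available for $\beta\le n$, and this direct bound already yields (\ref{HND1112}) with $\rho_A=\rho_k/\rho_0$; similarly, under (A2) the same direct bound gives (\ref{HND1116}) with $\rho_B=\rho_{k+1}/\rho_2$ for $\beta\ge n+1$.

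\textbf{Step 3 (Parity cancellation).} For the remaining combinations --- (A2) or (A3) when $\beta\le n$, and (A1) or (A3) when $\beta\ge n+1$ --- decompose $v_\beta=\bar v_\beta+w_\beta$, where $\bar v_\beta$ is a Keller-type explicit profile obtained by interpolating $\psi_\beta$ linearly across the narrow region. By (H5), each component of $\bar v_\beta$ has exact parity in every $x_j$, $j=1,\ldots,n-1$, determined by whether $\psi_\beta$ is a translation $e_i$ or a rotation $x_ke_j-x_je_k$. The parity conditions imposed on $\varphi^i$ in (A1)--(A3) are arranged precisely so that $\frac{\partial\bar v_\beta}{\partial\nu_0}\cdot\varphi$ becomes an odd function of at least one $x_j$ on the symmetric disk $B'_R$, hence
\begin{align*}
\int_{\Gamma^-_R}\frac{\partial\bar v_\beta}{\partial\nu_0}\cdot\varphi\,dS' = 0.
\end{align*}
Only the remainder $w_\beta$ then contributes, and the energy iteration from \cite{HL} provides an improved pointwise estimate for $|\partial_{\nu_0}w_\beta|$ whose integral against $|x'|^k$ (resp.\ $|x'|^{k+1}$) is bounded uniformly in $\varepsilon$, matching the ``good'' factors $\rho_A=1/\rho_0$ in (\ref{MNM86}) and $\rho_B=1/\rho_2$ in (\ref{MNM87}).

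The main obstacle is the bookkeeping in Step 3: the parities of each of the $n$ components of $\bar v_\beta$ must be worked out individually and matched against the patterns (A1)--(A3) component by component, and one must verify that the remainder $w_\beta$ --- after the leading singular profile is subtracted --- really satisfies a pointwise bound strong enough to make its contribution uniformly $O(\eta)$. The translation-versus-rotation dichotomy behind (\ref{MNM86}) and (\ref{MNM87}) --- namely that (A2) kills only the translation singularity while (A1) kills only the rotation one, and (A3) kills both --- should then emerge naturally from this componentwise parity analysis.
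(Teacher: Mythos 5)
Your proposal is essentially a dual rewriting of the paper's argument, and it is correct in substance, but with one imprecision you should repair. The paper evaluates $Q_\beta[\varphi]=\int_{\partial D_1}\frac{\partial u_0}{\partial\nu_0}\big|_+\cdot\psi_\beta$ directly on $\partial D_1$: it splits $u_0=\sum_l u_{0l}$, subtracts the Keller profile $\tilde u_{0l}$ (whose normal derivative carries $\varphi^l(x',h(x'))/\delta$), bounds the remainder via Lemma \ref{lem89}, and kills the surviving singular integral $\int_{B'_R}\varphi^l(x',h(x'))\,\psi_\beta^l/\delta\,dx'$ by parity. You instead use the dual identity $Q_\beta[\varphi]=\pm\int_{\partial D}\frac{\partial u_\beta}{\partial\nu_0}\big|_+\cdot\varphi$ — which the paper also uses, but only in Lemma \ref{lemmaLw.01} — and subtract the profile $\tilde u_\beta$ of $u_\beta$ rather than that of $u_0$, invoking Corollary \ref{co.001} for the remainder. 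After unwinding, your principal singular integral is the same $\int_{B'_R}\varphi^l\,\psi_\beta^l/\delta\,dx'$ and the parity mechanism is identical, so the two routes buy the same thing; yours is not simpler or more general, just the mirror image.

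The imprecision is in Step~3, where you claim that the whole integrand $\frac{\partial\bar v_\beta}{\partial\nu_0}\cdot\varphi$ is odd on $\Gamma^-_R$ and hence that $\int_{\Gamma^-_R}\frac{\partial\bar v_\beta}{\partial\nu_0}\cdot\varphi\,dS'=0$. This is too strong: ({\bf{\em H5}}) only makes $h_1-h$ even, not $h$ or $\nabla_{x'}h$ separately, so the tangential normal components $\nu_i$ ($i<n$) and the surface measure $\sqrt{1+|\nabla_{x'}h|^2}$ carry no definite parity, and neither do the conormal terms that involve $\nabla_{x'}\bar v$. What the parity assumption does kill is precisely the singular piece $\mu\,\partial_n\bar v^i_\beta\,\nu_n\,\varphi^i$, after cancelling $\nu_n\,dS'$ against $-dx'$ so that only the even factor $\delta(x')$ remains in the denominator. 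The remaining conormal contributions must then be bounded \emph{without} parity; this works because ({\bf{\em H3}}) gives $|\nu_i|\le Cd^{m-1}(x')$ and $|\nabla_{x'}\bar v|\le Cd^{m-1}/\delta$, so those pieces integrate against $|x'|^k$ to $O(\eta+\|\varphi\|_{C^2(\partial D)})$ uniformly in $\varepsilon$. With this split made explicit, your Step~3 matches Steps~2--4 of the paper's proof.
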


\begin{remark}
If we only assume condition (\ref{zw11}) holds with respect to $\varphi$, we can obtain
\begin{align*}
|Q_{\beta}[\varphi]|\leq&C(\eta\rho_{k}(n,m;\varepsilon)+\|\varphi\|_{C^{2}(\partial D)}),\quad\;\,\beta=1,2,\cdots,n,
\end{align*}
and
\begin{align*}
|Q_{\beta}[\varphi]|\leq&C(\eta\rho_{k+1}(n,m;\varepsilon)+\|\varphi\|_{C^{2}(\partial D)}),\quad\;\,\beta=n+1,\cdots,\frac{n(n+1)}{2}.
\end{align*}
\end{remark}

In light of decomposition (\ref{Le2.015}) for $|\nabla u|$, it follows from Proposition \ref{lekm323}, Corollary \ref{co.001}, Lemma \ref{lem89} and Proposition \ref{proposition123} below that
\begin{corollary}\label{CORO987}
Assume as in Proposition \ref{lekm323}. Let $u\in H^{1}(D;\mathbb{R}^{n})\cap C^{1}(\overline{\Omega},\mathbb{R}^{n})$ be the solution of (\ref{La.002}). Then for a sufficiently small $\varepsilon>0$, we obtain that
\begin{align}\label{DALN666}
|\nabla u(x)|\leq&\frac{C}{\varepsilon+|x'|^{m}}\left[\eta\rho_{A}(\varepsilon)+\frac{\|\varphi\|_{C^{2}(\partial D)}}{\rho_{0}(n,m;\varepsilon)}+|x'|\left(\eta\rho_{B}(\varepsilon)+\frac{\|\varphi\|_{C^{2}(\partial D)}}{\rho_{2}(n,m;\varepsilon)}\right)\right]\notag\\
&+\frac{\eta|x'|^{k}}{\varepsilon+|x'|^{m}}+C\|\varphi\|_{C^{2}(\partial D)},\quad\;\,x\in\Omega_{R}.
\end{align}
where $\rho_{A}(\varepsilon)$ and $\rho_{B}(\varepsilon)$ are defined by (\ref{MNM86}) and (\ref{MNM87}), respectively.
\end{corollary}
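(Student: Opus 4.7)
The plan is to directly assemble the corollary from the four ingredients mentioned in the hypothesis, once the geometric specialization $\Sigma'=\{0'\}$ (so $|\Sigma'|=0$ and $d(x')=|x'|$) is applied. Starting from the decomposition (\ref{Le2.015}), I would write
\[
u(x)=\sum_{\alpha=1}^{n(n+1)/2} C^{\alpha}\,v_{\alpha}(x)+u_{0}(x),
\]
where $v_{\alpha}$ are the auxiliary functions attached to the rigid-motion basis $\{\psi_{\alpha}\}$, $u_{0}$ solves (\ref{Lek2.012}), and the constants $C^{\alpha}$ are fixed by the orthogonality conditions in (\ref{La.002}). Taking $|\nabla u|\leq\sum_{\alpha}|C^{\alpha}|\,|\nabla v_{\alpha}|+|\nabla u_{0}|$ reduces the proof to three separate bounds.

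First, Lemma \ref{lem89} supplies the pointwise gradient estimates for the $v_{\alpha}$ in $\Omega_{R}$. In the specialized setting $\Sigma'=\{0'\}$ these estimates collapse to
\[
|\nabla v_{\alpha}(x)|\leq \frac{C}{(\varepsilon+|x'|^{m})\,\rho_{0}(n,m;\varepsilon)}\quad(1\leq\alpha\leq n)
\]
and
\[
|\nabla v_{\alpha}(x)|\leq \frac{C(\varepsilon+|x'|)}{(\varepsilon+|x'|^{m})\,\rho_{2}(n,m;\varepsilon)}\quad(n+1\leq\alpha\leq n(n+1)/2),
\]
after canceling one factor of $\varepsilon$ between the numerator of the $v_{\alpha}$-estimates from \cite{BLL2015,HL} and the denominator $|\Sigma'|+\varepsilon\rho_{i}=\varepsilon\rho_{i}$. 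Second, Corollary \ref{co.001} controls the scalar multipliers through $|C^{\alpha}|\leq C\bigl(|Q_{\beta}[\varphi]|+\|\varphi\|_{C^{2}(\partial D)}\bigr)$, and inserting the bounds (\ref{HND1112})--(\ref{HND1116}) of Proposition \ref{lekm323} gives
\[
|C^{\alpha}|\leq C\bigl(\eta\,\rho_{0}(n,m;\varepsilon)\rho_{A}(\varepsilon)+\|\varphi\|_{C^{2}(\partial D)}\bigr)\quad\text{for }\alpha\leq n,
\]
and an analogous bound with $\rho_{2}\rho_{B}$ in the rotational range. Multiplying these against the $|\nabla v_{\alpha}|$ bounds yields precisely the first bracket in (\ref{DALN666}) from the translational indices and the second bracket (with the additional $|x'|$ factor) from the rotational indices.

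Third, I would invoke Proposition \ref{proposition123} on $u_{0}$. Since $h(0')=0$ and $h\in C^{2,\alpha}$, the point $(x',h(x'))\in\Gamma_{R}^{-}$ satisfies $|(x',h(x'))|\leq C|x'|$, so the $k$-growth hypothesis (\ref{zw11}) becomes $|\varphi(x',h(x'))|\leq C\eta|x'|^{k}$ on $\Gamma_{R}^{-}$. Proposition \ref{proposition123} then converts this into the pointwise estimate $|\nabla u_{0}(x)|\leq C\bigl(\eta|x'|^{k}/(\varepsilon+|x'|^{m})+\|\varphi\|_{C^{2}(\partial D)}\bigr)$, which supplies the last two summands in (\ref{DALN666}). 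Summing the three contributions and absorbing the various $\|\varphi\|_{C^{2}(\partial D)}$ pieces into a single $C\|\varphi\|_{C^{2}(\partial D)}$ tail completes the proof.

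The only non-mechanical step is the bookkeeping in the second paragraph: one must verify that the $\varepsilon$ factors in the decay rates of $|\nabla v_{\alpha}|$ cancel cleanly against the $\varepsilon$ inside $|\Sigma'|+\varepsilon\rho_{i}$ in the degenerate limit $\Sigma'=\{0'\}$, so that the surviving weights are $1/\rho_{0}$ and $1/\rho_{2}$ rather than stray $\varepsilon^{-1}$'s. Because the formulas are homogeneous in $\rho_{i}(n,m;\varepsilon)$, the three scaling regimes $m>n+i-1$, $m=n+i-1$, $m<n+i-1$ need not be treated separately; a single chain of estimates handles all cases simultaneously. No new analytic ingredient is required beyond Proposition \ref{lekm323} itself, so this corollary is genuinely a consequence of assembling the cited results.
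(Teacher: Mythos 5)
Your overall strategy — decompose $\nabla u$ via (\ref{Le2.015}), bound each $|C^{\alpha}|\,|\nabla u_{\alpha}|$ and $|\nabla u_{0}|$ separately in the special case $\Sigma'=\{0'\}$, then sum — is exactly what the paper does, and your final product does reproduce (\ref{DALN666}). But the intermediate estimates you write down are not the ones the cited results actually supply, and the attributions are cyclically permuted.

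Concretely: (i) The gradient estimate for the auxiliary functions $u_{\alpha}$ ($=v_{\alpha}$ in your notation) comes from Corollary \ref{co.001}, not Lemma \ref{lem89}; Lemma \ref{lem89} bounds $\nabla u_{0l}$. Likewise the bound on $|C^{\alpha}|$ comes from Proposition \ref{proposition123}, not Corollary \ref{co.001}, and Proposition \ref{proposition123} does not say anything about $\nabla u_{0}$. (ii) More importantly, your stated bound
\[
|\nabla v_{\alpha}(x)|\leq\frac{C}{(\varepsilon+|x'|^{m})\,\rho_{0}(n,m;\varepsilon)},\qquad 1\leq\alpha\leq n,
\]
is false: Corollary \ref{co.001}, estimate (\ref{Le2.029}), gives the two-sided bound $\tfrac{1}{C(\varepsilon+d^{m}(x'))}\leq|\nabla u_{\alpha}|\leq\tfrac{C}{\varepsilon+d^{m}(x')}$ with no $\rho_{0}$ factor; since $\rho_{0}\to\infty$ (for $m\geq n-1$), inserting $1/\rho_{0}$ contradicts the lower bound. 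The factor $1/\rho_{0}$ lives instead in the estimate for $|C^{\alpha}|$: Proposition \ref{proposition123} gives $|C^{\alpha}|\leq\tfrac{C\varepsilon}{|\Sigma'|+\varepsilon\rho_{0}}Q_{\beta;\mathrm{I}}[\varphi]$, which at $\Sigma'=\{0'\}$ becomes $|C^{\alpha}|\leq\tfrac{C}{\rho_{0}}Q_{\beta;\mathrm{I}}[\varphi]$, whereas you wrote $|C^{\alpha}|\leq C(|Q_{\beta}[\varphi]|+\|\varphi\|_{C^{2}(\partial D)})$, which is missing that $1/\rho_{0}$. The two misplacements cancel in the product, so the displayed corollary still emerges, but the chain of inequalities as written does not follow from the paper's lemmas; a referee would send this back. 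The same remark applies to the rotational range $n+1\le\alpha\le n(n+1)/2$ with $\rho_{2}$ in place of $\rho_{0}$. Redistribute the $1/\rho_{0}$ (resp. $1/\rho_{2}$) into the $|C^{\alpha}|$ estimate where it belongs, and match the labels to Corollary \ref{co.001} (for $\nabla u_{\alpha}$), Proposition \ref{proposition123} (for $C^{\alpha}$), and Lemma \ref{lem89} (for $\nabla u_{0}$), and the argument is correct.
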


\begin{remark}
Under assumption ({\bf{\em A1}}), the upper bound in (\ref{DALN666}) implies that

$(1)$ if $m<n$ for any $k>0$ or if $m=n$ for $k=1$, then it achieves its maximum only at the shortest line $\{x'=0'\}\cap\Omega$;

$(2)$ if $m=n$ for $k>1$ or if $m\geq n+1$ for $k<m-n+1$, then the maximum of the upper bound achieves at $\{x'=0'\}\cap\Omega$ and $\{|x'|=\varepsilon^{\frac{1}{m}}\}\cap\Omega$ simultaneously;

$(3)$ if $m\geq n+1$ for $k\geq m-n+1$, then the maximum attains at $\{|x'|=\varepsilon^{\frac{1}{m}}\}\cap\Omega$.
\end{remark}

\begin{remark}
Under assumption ({\bf{\em A2}}) or ({\bf{\em A3}}), the upper bound in (\ref{DALN666}) shows that for any $k>0$,

$(1)$ if $m<n$, then its maximum achieves only at $\{x'=0'\}\cap\Omega$;

$(2)$ if $m=n$, then the maximum achieves at both $\{x'=0'\}\cap\Omega$ and $\{|x'|=\varepsilon^{\frac{1}{m}}\}\cap\Omega$;

$(3)$ if $m>n$, then the maximum attains at $\{|x'|=\varepsilon^{\frac{1}{m}}\}\cap\Omega$.
\end{remark}

\begin{remark}
When $k\geq m-n$ and $m>n+1$, (\ref{DALN666}) becomes
\begin{align}\label{jklnm189}
|\nabla u(x)|\leq&C\left(\frac{\eta+\|\varphi\|_{C^{2}(\partial D)}}{\varepsilon^{n/m}}+\frac{|x'|^{k}}{\varepsilon+|x'|^{m}}\right),\quad\;\,\mathrm{for}\; x\in\Omega_{R}.
\end{align}
Letting $m,k\rightarrow\infty$ simultaneously,  condition (\ref{zw11}) implies $\varphi(x)\equiv\varphi(0)$ on $\Gamma^{-}_{R}$ and estimate (\ref{jklnm189}) becomes $|\nabla u(x)|\leq C$ in $\Omega_{R}$.
\end{remark}

Finally, we establish the lower bounds of $|\nabla u|$ to show that the blow-up rates obtained in Corollary \ref{CORO987} are optimal, for some explicit $\varphi$. Denote $\Omega^{\ast}:=D\setminus\overline{D^{\ast}_{1}}$. Similarly as before, we introduce a family of linear functionals with respect to $\varphi$,
$$Q^{\ast}_{\beta}[\varphi]:=\int_{\partial D_{1}}\frac{\partial u_{0}^{\ast}}{\partial\nu_{0}}\Big|_{+}\cdot\psi_{\beta},\quad\quad\beta=1,2,\cdots,\frac{n(n+1)}{2},$$
where $u_{0}^{\ast}$ is a solution of the following problem:
\begin{align}\label{LLee.008}
\begin{cases}
\mathcal{L}_{\lambda,\mu}u_{0}^{\ast}=0,\quad\quad\;\,&\mathrm{in}\;\,\Omega,\\
u_{0}^{\ast}=0,\quad\quad\;\,&\mathrm{on}\;\,\partial D_{1}^{\ast},\\
u_{0}^{\ast}=\varphi(x),\quad\;\,&\mathrm{on}\;\,\partial D.
\end{cases}
\end{align}

Under the following assumptions:
\begin{enumerate}
{\it
\item[$(\mathbf{ \Phi1})$]
for $m\geq n+1$, $2\leq k<m-n+1$, if
\begin{align}\label{KHMDC1}
h_{1}(x')-h(x')=|x'|^{m},\quad\mbox{in}\;B'_{R},
\end{align}
and
\begin{align}
\varphi^{i}(x)=\eta|x'|^{k},\quad\mbox{on}\;\Gamma^{-}_{R},\quad\mbox{for}~ i=1,2,\cdots,n;
\end{align}
\item[$(\mathbf{\Phi2})$]
for $m\geq n$, $k=1$, if
\begin{align*}
h_{1}(x')-h(x')=(1+x_{1})|x'|^{m},\quad\mbox{in}\;B'_{R},
\end{align*}
and
\begin{align*}
\varphi^{i}(x)=x_{1},\quad\mbox{on}\;\Gamma^{-}_{R},\quad\mbox{for}~i=1,2,\cdots,n;
\end{align*}
\item[$(\mathbf{\Phi3})$]
for $m=n$ and $k>1$, if (\ref{zw11}) holds and there exists some integer $1\leq k_{0}\leq n$ such that $Q^{\ast}_{k_{0}}[\varphi]\neq0$;
\item[$(\mathbf{\Phi4})$]
for $n-1\leq m<n$, there exists some integer $1\leq k_{0}\leq n$ such that $Q^{\ast}_{k_{0}}[\varphi]\neq0$;
\item[$(\mathbf{\Phi5})$]
for $m<n-1$, there exists some integer $1\leq k_{0}\leq n$ such that $Q^{\ast}_{k_{0}}[\varphi]\neq0$ and $Q^{\ast}_{\beta}[\varphi]=0$ for all $\beta\neq k_{0}$,}
\end{enumerate}
we obtain the lower bound of $|\nabla u|$ at the shortest line $\{x'=0'\}\cap\Omega$ between $D$ and $D_{1}$ as follows.
\begin{theorem}\label{MMM896}
Assume that $D_{1}\subset D\subseteq\mathbb{R}^{n}\,(n\geq2)$ are defined as above and conditions ({\bf{\em H1}})--({\bf{\em H5}}) hold with $\Sigma'=\{0'\}$. Let $u\in H^{1}(D;\mathbb{R}^{n})\cap C^{1}(\overline{\Omega};\mathbb{R}^{n})$ be the solution of (\ref{La.002}). Among $(\mathbf{\Phi1})$--$(\mathbf{\Phi5})$, if one of them holds, then for a sufficiently small $\varepsilon>0$,
\begin{align}\label{PPN1}
|\nabla u(x)|\geq\frac{\eta\rho_{k;0}(n,m;\varepsilon)}{C\varepsilon},\quad\quad\mathrm{for}\;\,x\in\{x'=0'\}\cap\Omega,
\end{align}
where $\rho_{k;0}(n,m;\varepsilon)=\rho_{k}(n,m;\varepsilon)/\rho_{0}(n,m;\varepsilon)$.

\end{theorem}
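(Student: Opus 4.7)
The plan is to exploit the linear decomposition $u = \sum_{\alpha=1}^{n(n+1)/2} C^\alpha v_\alpha + u_0$, where $v_\alpha$ is the Lam\'e solution in $\Omega$ with $v_\alpha = \psi_\alpha$ on $\partial D_1$ and $v_\alpha = 0$ on $\partial D$, and $u_0$ solves (\ref{Lek2.012}). The free constants are determined by $\sum_\alpha a_{\alpha\beta} C^\alpha = -Q_\beta[\varphi]$ with $a_{\alpha\beta} := \int_{\partial D_1} \tfrac{\partial v_\alpha}{\partial \nu_0}\big|_+ \cdot \psi_\beta$. From the energy analysis developed earlier in the paper (following \cite{BLL2015,BLL2017}), the matrix $(a_{\alpha\beta})$ is nearly block-diagonal: the translation block ($\alpha,\beta \le n$) is of order $\rho_0(n,m;\varepsilon)$ and the rotation block ($\alpha,\beta \ge n+1$) of order $\rho_2(n,m;\varepsilon)$, with off-diagonal entries of strictly lower order. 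Inverting the system therefore gives $|C^{k_0}| \gtrsim |Q_{k_0}[\varphi]|/\rho_0(n,m;\varepsilon)$ for the translation index $k_0$ singled out by each hypothesis.

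The first main step is to ensure $|Q_{k_0}[\varphi]| \gtrsim \eta\,\rho_k(n,m;\varepsilon)$. Since $D_1 = D_1^* + (0',\varepsilon) \to D_1^*$ and the Lam\'e system depends continuously on the domain, $u_0 \to u_0^*$ in $C^1$ on compact subsets away from the contact point, so $Q_\beta[\varphi] = Q_\beta^*[\varphi] + o(1)$ as $\varepsilon \to 0$. Under $(\mathbf{\Phi3})$--$(\mathbf{\Phi5})$ this yields the required lower bound immediately from the hypothesis $Q_{k_0}^*[\varphi]\neq 0$. In cases $(\mathbf{\Phi1})$ and $(\mathbf{\Phi2})$ I would instead compute $Q_{k_0}^*[\varphi]$ directly by rewriting it as a boundary integral on $\partial D$, substituting the explicit forms of $\varphi^i$ and $h_1-h$, and expanding near $\{x' = 0'\}$; the symmetry $(\mathbf{H5})$ combined with the prescribed parity of $\varphi$ guarantees that the natural leading power $\eta\,\rho_k(n,m;\varepsilon)$ survives without cancellation. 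Combined with the inversion of $A$, this gives $|C^{k_0}| \gtrsim \eta\,\rho_{k;0}(n,m;\varepsilon)$.

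The second step is to combine this with the pointwise profile $|\nabla v_{k_0}(0',x_n)| \sim 1/\varepsilon$ on the shortest segment, which follows from the explicit leading form $v_{k_0} \approx \psi_{k_0}\bigl(1 - (x_n-h)/(\varepsilon + h_1 - h)\bigr)$ in the narrow region together with the remainder bounds from the earlier sections. From the decomposition,
\begin{align*}
|\nabla u(0',x_n)| \ge |C^{k_0}|\,|\nabla v_{k_0}(0',x_n)| - \sum_{\beta \neq k_0} |C^\beta|\,|\nabla v_\beta(0',x_n)| - |\nabla u_0(0',x_n)|,
\end{align*}
and the first term contributes $\gtrsim \eta\,\rho_{k;0}(n,m;\varepsilon)/\varepsilon$. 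The remaining terms must be shown to be strictly lower order: rotation-mode gradients $|\nabla v_\beta(0',x_n)|$ for $\beta \ge n+1$ carry an explicit $|x'|$ factor that vanishes on the axis; the other translation-mode contributions are either absent by the parity of $\varphi$ or absorbed into the off-diagonal remainder of $A^{-1}$; and $|\nabla u_0(0',x_n)|$ is bounded by interior $C^1$ estimates together with (\ref{zw11}) as $O(\eta|x|^{k-1} + \|\varphi\|_{C^2(\partial D)})$, which is dominated for small $\varepsilon$.

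The hard part will be cases $(\mathbf{\Phi4})$ and $(\mathbf{\Phi5})$ with $m < n$, in which $\rho_0(n,m;\varepsilon) \in \{1, |\ln\varepsilon|\}$ and the translation block of $A$ loses its strong diagonal dominance. This is precisely why $(\mathbf{\Phi5})$ imposes the stronger requirement $Q^*_\beta[\varphi] = 0$ for all $\beta \neq k_0$, eliminating the possibility of cancellation against other translation modes, while $(\mathbf{\Phi4})$ exploits the intermediate regime $n-1 \le m < n$ where the off-diagonal entries of $A$ remain strictly subordinate. A secondary subtlety is the explicit $Q^*$-computation in case $(\mathbf{\Phi1})$, where one must verify that the symmetry of $h_1 - h = |x'|^m$ combined with $\varphi^i = \eta|x'|^k$ produces a non-vanishing coefficient of $\eta\,\rho_k(n,m;\varepsilon)$, rather than a cancellation that promotes the true order to the next term of the expansion.
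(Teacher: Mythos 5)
Your overall strategy — extract $|C^{k_0}| \gtrsim |Q_{k_0}[\varphi]|/\rho_0(n,m;\varepsilon)$ from the near-block-diagonal linear system, pair it with the $1/\varepsilon$ profile of $\partial_n u_{k_0}^{k_0}$ on the shortest segment, and dominate the other modes via the $\varepsilon^{1-1/m}$ suppression of the off-axis components and the $|x'|$-factor of the rotation modes — is essentially the paper's argument, including the observation that $(\mathbf{\Phi5})$'s extra vanishing condition $Q^*_\beta=0$ for $\beta\neq k_0$ is what rescues invertibility when $m<n-1$.

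There is, however, a concrete gap in your handling of cases $(\mathbf{\Phi1})$ and $(\mathbf{\Phi2})$. You propose to verify the required lower bound on $Q_{k_0}[\varphi]$ by computing the \emph{limiting} functional $Q^*_{k_0}[\varphi]$ via an explicit boundary integral and showing a ``coefficient of $\eta\,\rho_k(n,m;\varepsilon)$'' survives. This cannot work: $Q^*_{k_0}[\varphi]$ is by definition an $\varepsilon$-independent number, so it can never produce the $\varepsilon$-dependent growth $\rho_k(n,m;\varepsilon)$. In fact under $(\mathbf{\Phi1})$ (where $m\ge n+1$ and $2\le k<m-n+1$) and under $(\mathbf{\Phi2})$ with $m>n$, the integral that would define $Q^*_{k_0}[\varphi]$, whose dominant piece is $\int_{|x'|<R}|x'|^{k-m}\,dx'$, is actually divergent, so the object does not even exist. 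The approximation $Q_\beta\to Q^*_\beta$ is only used by the paper in the regimes $m\le n$ (Lemma \ref{lemmaLw.01}), precisely because there the limit converges and is finite. In $(\mathbf{\Phi1})$ and $(\mathbf{\Phi2})$ you must instead estimate $Q_{k_0}[\varphi]$ itself, keeping the $\varepsilon$ in the denominator: the dominant contribution is
\begin{align*}
\mu\int_{|x'|<R}\frac{\eta\,|x'|^{k}}{\varepsilon+|x'|^{m}}\,dx',
\end{align*}
which, after splitting off the lower-order remainder terms (as in (\ref{YTQ1})--(\ref{YTQ5}) in the paper), gives both the upper and lower bounds $|Q_{k_0}[\varphi]|\sim\eta\,\rho_k(n,m;\varepsilon)$ directly. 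The $(\mathbf{H5})$-parity point you raise is relevant, but its role is to kill the $\beta\ge n+1$ functionals and the $j\ne k_0$ contributions, not to produce a non-vanishing leading term in $Q^*$; the leading term is forced by the explicit sign pattern of $\varphi^{k_0}$ against $\varepsilon+h_1-h$.
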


For the lower bound of $|\nabla u|$ on the cylinder surface $\{|x'|=\sqrt[m]{\varepsilon}\}\cap\Omega$, see Theorem \ref{AAA666666} below.

Our paper is organized as follows. Section $2$ provides a decomposition of the solution $u$ of (\ref{La.002}) as a linear combination of $u_{\alpha}$, $\alpha=0,1,\cdots,\frac{n(n+1)}{2}$, defined by (\ref{Lek2.012}) and (\ref{P2.005}), which reduces the proof of Theorem \ref{Lthm066} to estimates of $|\nabla u_{\alpha}|$ and $C^{\alpha}$, $\alpha=1,2,\cdots,\frac{n(n+1)}{2}$. Section $3$ presents the proof of Theorem \ref{Lthm066}. In section $4$ we give a classification for the given boundary data $\varphi$ according to its parity and then identify the singularities of blowup factors $Q_{\beta}[\varphi]$, $\beta=1,2,\cdots,\frac{n(n+1)}{2}$, as shown in Proposition \ref{lekm323}. The lower bound of $|\nabla u|$ at the shortest line $\{x'=0'\}\cap\Omega$ between $D$ and $D_{1}$ in Theorem \ref{MMM896} is proved in Section $5$. In Section $6$ we also show the lower bound of $|\nabla u|$ on the cylinder surface $\{|x'|=\sqrt[m]{\varepsilon}\}\cap\Omega$ in Theorem \ref{AAA666666}.

\section{Preliminary}
\subsection{Properties of the tensor $\mathbb{C}$}
For the isotropic elastic material, set
$$\mathbb{C}:=(C_{ijkl})=(\lambda\delta_{ij}\delta_{kl}+\mu(\delta_{ik}\delta_{jl}+\delta_{il}\delta_{jk})),\quad \mu>0,\quad n\lambda+2\mu>0.$$
The components $C_{ijkl}$ possess symmetry property:
\begin{align}\label{symm}
C_{ijkl}=C_{klij}=C_{klji},\quad i,j,k,l=1,2,...,n.
\end{align}
For $n\times n$ matrices $A=(A_{ij})$, $B=(B_{ij})$, denote
\begin{align*}
(\mathbb{C}A)_{ij}=\sum_{k,l=1}^{n}C_{ijkl}A_{kl},\quad\hbox{and}\quad(A,B)\equiv A:B=\sum_{i,j=1}^{n}A_{ij}B_{ij},
\end{align*}
Thus,
$$(\mathbb{C}A, B)=(A, \mathbb{C}B).$$
For an $n\times n$ real symmetric matrix $\xi=(\xi_{ij})$, it follows from (\ref{symm}) that
\begin{align}\label{ellip}
\min\{2\mu, n\lambda+2\mu\}|\xi|^2\leq(\mathbb{C}\xi, \xi)\leq\max\{2\mu, n\lambda+2\mu\}|\xi|^2,
\end{align}
where $|\xi|^2=\sum\limits_{ij}\xi_{ij}^2.$ That is, $\mathbb{C}$ satisfies the ellipticity condition. In particular,
\begin{align}\label{Le2.012}
\min\{2\mu, n\lambda+2\mu\}|A+A^T|^2\leq(\mathbb{C}(A+A^T), (A+A^T)).
\end{align}
Besides, it is known that for any open set $O$ and $u, v\in C^2(O;\mathbb{R}^{n})$,
\begin{align}\label{Le2.01222}
\int_O(\mathbb{C}^0e(u), e(v))\,dx=-\int_O\left(\mathcal{L}_{\lambda, \mu}u\right)\cdot v+\int_{\partial O}\frac{\partial u}{\partial \nu_0}\Big|_{+}\cdot v.
\end{align}
\subsection{Solution split}
As in \cite{BJL2017}, the solution $u$ of (\ref{La.002}) can be split as follows
\begin{align*}
u=\sum^{\frac{n(n+1)}{2}}_{\alpha=n}C^{\alpha}u_{\alpha}+u_{0},\quad\;\,\mathrm{in}\;\,\Omega,
\end{align*}
where $C^{\alpha},\,\alpha=1,2,\cdots,\frac{n(n+1)}{2},$ are some free constants to be determined later by the forth line in (\ref{La.002}), and $u_{\alpha}\in C^{1}(\overline{\Omega};\mathbb{R}^{n})\cap C^{2}(\Omega;\mathbb{R}^{n}),\,\alpha=1,2,\cdots,\frac{n(n+1)}{2},$ respectively, verifies
\begin{equation}\label{P2.005}
\begin{cases}
\mathcal{L}_{\lambda,\mu}u_{\alpha}=0,\quad\;\,&\mathrm{in}\;\,\Omega,\\
u_{\alpha}=\psi_{\alpha},\quad\;\,&\mathrm{on}\;\,\partial D_{1},\\
u_{\alpha}=0,\quad\;\,&\mathrm{on}\;\,\partial D,
\end{cases}
\end{equation}
and $u_{0}$ is defined by (\ref{Lek2.012}).
Therefore
\begin{align}\label{Le2.015}
\nabla u=\sum^{\frac{n(n+1)}{2}}_{\alpha=1}C^{\alpha}\nabla u_{\alpha}+\nabla u_{0},\quad\;\,\mathrm{in}\;\,\Omega.
\end{align}
Thus, in order to estimate $|\nabla u|$, it suffices to establish the following two aspects of estimates:

(i) estimates of $|\nabla u_{\alpha}|$, $\alpha=0,1,\cdots,\frac{n(n+1)}{2}$;

(ii) estimates of $C^{\alpha},$ $\alpha=1,2,\cdots,\frac{n(n+1)}{2}$.

\subsection{A general boundary value problem}

To estimate $|\nabla u_{\alpha}|$, $\alpha=1,2,\cdots,\frac{n(n+1)}{2}$, we begin with considering the following general boundary value problem:
\begin{equation}\label{P2.008}
\begin{cases}
\mathcal{L}_{\lambda,\mu}v:=\nabla\cdot(\mathbb{C}^{0}e(v))=0,\quad\;\,&\mathrm{in}\;\,\Omega,\\
v=\psi(x),&\mathrm{on}\;\,\partial D_{1},\\
v=0,&\mathrm{on}\;\,\partial D,
\end{cases}
\end{equation}
where $\psi(x)=(\psi^{1}(x),\psi^{2}(x),\cdots,\psi^{n}(x))\in C^{2}(\partial D_{1};\mathbb{R}^{n})$ is a given vector-valued function.

Note that the solution of (\ref{P2.008}) can be split into
$$v=\sum^{n}_{i=1}v_{i},$$
where $v_{i}=(v_{i}^{1},v_{i}^{2},\cdots,v_{i}^{n})^{T}$, $i=1,2,\cdots,n$, with $v_{i}^{j}=0$ for $j\neq i$, and $v_{i}$ solves the following boundary value problem
\begin{align}\label{P2.010}
\begin{cases}
  \mathcal{L}_{\lambda,\mu}v_{i}:=\nabla\cdot(\mathbb{C}^0e(v_{i}))=0,\quad&
\hbox{in}\  \Omega,  \\
v_{i}=( 0,\cdots,0,\psi^{i}, 0,\cdots,0)^{T},\ &\hbox{on}\ \partial{D}_{1},\\
v_{i}=0,&\hbox{on} \ \partial{D}.
\end{cases}
\end{align}
Then
\begin{align}\label{Le2.018}
\nabla v=\sum^{n}_{i=1}\nabla v_{i}.
\end{align}

We next estimate $|\nabla v_{i}|$ one by one. For this purpose, we define a scalar auxiliary function $\bar{v}\in C^{2}(\mathbb{R}^{n})$ satisfying $\bar{v}=1$ on $\partial D_{1}$, $\bar{v}=0$ on $\partial D$ and
\begin{align}\label{Le2.019}
\bar{v}(x)=\frac{x_{n}-h(x')}{\varepsilon+h_{1}(x')-h(x')},\quad\;\,\mathrm{in}\;\,\Omega_{2R},
\end{align}
and
\begin{align}\label{Le2.020}
\|\bar{v}\|_{C^{2}(\Omega\setminus\Omega_{R})}\leq C.
\end{align}
Extend $\psi\in C^{2}(\partial D_{1};\mathbb{R}^{n})$ to $\psi\in C^{2}(\overline{\Omega};\mathbb{R}^{n})$ such that $\|\psi^{i}\|_{C^{2}(\overline{\Omega\setminus\Omega_{R}})}\leq C\|\psi^{i}\|_{C^{2}(\partial D_{1})},$ for $i=1,2,\cdots,n$. Construct a cutoff function $\rho\in C^{2}(\overline{\Omega})$ satisfying $0\leq\rho\leq1$, $|\nabla\rho|\leq C$ on $\overline{\Omega}$, and
\begin{align}\label{Le2.021}
\rho=1\;\,\mathrm{on}\;\,\Omega_{\frac{3}{2}R},\quad\rho=0\;\,\mathrm{on}\;\,\overline{\Omega}\setminus\Omega_{2R}.
\end{align}
For $x\in\Omega$, we define
\begin{align*}
\tilde{v}_{i}(x)=\left(0,\cdots,0,[\rho(x)\psi^{i}(x',\varepsilon+h_{1}(x'))+(1-\rho(x))\psi^{i}(x)]\bar{v}(x),0,\cdots,0\right)^{T}.
\end{align*}
In particular,
\begin{align*}
\tilde{v}_{i}(x)=(0,\cdots,0,\psi^{i}(x',\varepsilon+h_{1}(x'))\bar{v}(x),0,\cdots,0)^{T}\quad\;\,\mathrm{in}\;\,\Omega_{R}
\end{align*}
and in light of (\ref{Le2.020}),
\begin{align*}
\|\tilde{v}_{i}\|_{C^{2}(\Omega\setminus\Omega_{R})}\leq C\|\psi^{i}\|_{C^{2}(\partial D_{1})}.
\end{align*}

For $(x',x_{n})\in\Omega_{2R}$, denote
\begin{align*}
\delta(x'):=\varepsilon+h_{1}(x')-h(x').
\end{align*}
A direct calculation yields that for $x\in\Omega_{R}$,
\begin{align}\label{Le2.0233}
\frac{|\psi^{i}(x',\varepsilon+h_{1}(x'))|}{C(\varepsilon+d^{m}(x'))}\leq|\nabla\tilde{v}_{i}|\leq\frac{C|\psi^{i}(x',\varepsilon+h_{1}(x'))|}{\varepsilon+d^{m}(x')}+C\|\nabla\psi^{i}\|_{L^{\infty}(\partial D_{1})}.
\end{align}
Similarly as in \cite{HL,BJL2017}, we can obtain pointwise gradient estimates for problem (\ref{P2.010}).
\begin{theorem}\label{thm86}
Assume as above. Let $v\in H^{1}(\Omega;\mathbb{R}^{n})$ be a weak solution of (\ref{P2.008}). Then for a sufficiently small $\varepsilon>0$,
\begin{align}\label{Le2.023}
|\nabla(v_{i}-\tilde{v}_{i})(x)|\leq\frac{C|\psi^{i}(x',\varepsilon+h_{1}(x'))|}{\sqrt[m]{\varepsilon+d^{m}(x')}}+C\|\psi^{i}\|_{C^{2}(\partial D_{1})},\quad\mathrm{in}\;\,\Omega_{R}.
\end{align}
Consequently, (\ref{Le2.023}), together with (\ref{Le2.0233}) yields that for $x\in\Omega_{R}$,
\begin{align}\label{Le2.025}
\frac{|\psi^{i}(x',\varepsilon+h_{1}(x'))|}{C(\varepsilon+d^{m}(x'))}\leq|\nabla v_{i}|\leq\frac{C|\psi^{i}(x',\varepsilon+h_{1}(x'))|}{\varepsilon+d^{m}(x')}+C\|\psi^{i}\|_{C^{2}(\partial D_{1})}.
\end{align}
Finally,
\begin{align*}
|\nabla v|\leq\frac{C|\psi(x',\varepsilon+h_{1}(x'))|}{\varepsilon+d^{m}(x')}+C\|\psi\|_{C^{2}(\partial D_{1})},\quad\;\,\mathrm{in}\;\,\Omega_{R},
\end{align*}
and
$$\|\nabla v\|_{L^{\infty}(\Omega\setminus\Omega_{R})}\leq C\|\psi\|_{C^{2}(\partial D_{1})}.$$
\end{theorem}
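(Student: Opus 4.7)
The plan is to adapt the energy iteration technique of Bao--Li--Li \cite{BLL2015,BLL2017}, as carried out in \cite{HL,BJL2017}, to the perturbed unknown $w := v_i - \tilde{v}_i$, which solves the inhomogeneous Lam\'e system $\mathcal{L}_{\lambda,\mu} w = -\mathcal{L}_{\lambda,\mu}\tilde{v}_i$ in $\Omega$ with $w=0$ on $\partial\Omega$. Because $\bar v$ is linear in $x_n$ with denominator $\delta(x') = \varepsilon + h_1(x') - h(x')$, a direct computation using (\ref{Le2.019}) shows that in $\Omega_R$ the forcing satisfies the pointwise bound
$$|\mathcal{L}_{\lambda,\mu}\tilde v_i(x)| \leq C\Big(\tfrac{|\psi^i(x',\varepsilon+h_1(x'))|}{\delta(x')}+\|\psi^i\|_{C^2(\partial D_1)}\Big).$$

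First I would establish a local energy iteration on narrow sub-cylinders $\Omega_s(z')$ for $z'\in B'_R$. Testing the equation for $w$ against $w\eta^2$ for a cutoff $\eta$ supported in $\Omega_t(z')$, applying the ellipticity (\ref{Le2.012}) together with Korn's inequality, I would obtain a Caccioppoli-type estimate
$$\int_{\Omega_s(z')}|\nabla w|^2\,dx \leq \frac{C}{(t-s)^2}\int_{\Omega_t(z')}|w|^2\,dx+C\int_{\Omega_t(z')}|\mathcal{L}_{\lambda,\mu}\tilde v_i|\,|w|\,dx.$$
Since $w$ has Dirichlet data on \emph{both} the top $\{x_n=\varepsilon+h_1(x')\}$ and bottom $\{x_n=h(x')\}$ of the channel, Poincar\'e's inequality in the $x_n$-direction gives $\|w\|_{L^2}^2 \leq C\delta^2(z')\|\partial_{x_n}w\|_{L^2}^2$, which absorbs the $(t-s)^{-2}$ term once $t-s$ is chosen comparable to the natural horizontal scale $\delta^{1/m}(z')$. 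Iterating on dyadic radii $s_k=\delta^{1/m}(z')(1+2^{-k})$ down to $s\sim\delta^{1/m}(z')$ yields an energy bound of the form
$$\int_{\Omega_{\delta^{1/m}(z')}(z')}|\nabla w|^2\,dx \leq C\bigl(|\psi^i(z',\varepsilon+h_1(z'))|^2+\delta^2(z')\|\psi^i\|_{C^2(\partial D_1)}^2\bigr)\cdot\delta^{\frac{n-1}{m}}(z').$$

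Next I would upgrade this $L^2$ estimate to the pointwise bound (\ref{Le2.023}) by a standard rescaling argument: the change of variables $y'=(x'-z')/\delta^{1/m}(z')$, $y_n=(x_n-h(x'))/\delta(z')$ maps the narrow cylinder onto a domain of order-one size on which the rescaled Lam\'e system is uniformly elliptic with bounded $C^{\alpha}$ coefficients; $W^{1,\infty}$ Schauder estimates on the rescaled problem then convert the energy bound into $|\nabla w(x)|\leq C|\psi^i(\cdot)|/\delta^{1/m}(x')+C\|\psi^i\|_{C^2(\partial D_1)}$. Combining this with (\ref{Le2.0233}) and observing that $\delta^{-1/m}\ll\delta^{-1}$ as $\delta\to 0$, the triangle inequality delivers both sides of (\ref{Le2.025}); summing over $i$ via (\ref{Le2.018}) produces the bound for $|\nabla v|$ in $\Omega_R$. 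On the non-degenerate region $\Omega\setminus\Omega_R$ the cross-sectional width is bounded below independently of $\varepsilon$, so standard interior and boundary Schauder theory for the Lam\'e system delivers $\|\nabla v\|_{L^\infty(\Omega\setminus\Omega_R)}\leq C\|\psi\|_{C^2(\partial D_1)}$ directly.

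The principal obstacle is the energy iteration in the second step: the absence of a maximum principle for the Lam\'e system forces the detour through Korn's inequality to recover full $\nabla w$ control from the symmetrized gradient $e(w)$, and the precise matching between the horizontal radius and the vertical width $\delta(z')\sim d^m(z')$ that determines the crossover scale $\delta^{1/m}$ must be tracked carefully. The double-Dirichlet structure of $w$ on the top and bottom of the channel is essential, since it is exactly what allows Poincar\'e's inequality to close the iteration with the correct width-dependent constant.
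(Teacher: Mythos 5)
Your high-level strategy (perturb by the auxiliary function, Caccioppoli estimate plus Poincar\'e in the $x_n$-variable, then rescale) is the right family of ideas, but the specific way you set up the local energy iteration and the rescaling has two genuine gaps, and they are precisely the points on which the stated estimate \eqref{Le2.023} improves on \cite{HL}.

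First, your local energy estimate is taken over cylinders of horizontal radius $\delta^{1/m}(z')$, and the bound you claim on $\int_{\Omega_{\delta^{1/m}(z')}(z')}|\nabla w|^2$ would, after the rescaling step, yield $|\nabla w|\lesssim |\psi^i|/\sqrt{\delta(z')}$, i.e.\ the denominator $\sqrt{\varepsilon+d^m(x')}$ of \cite{HL} rather than the denominator $\sqrt[m]{\varepsilon+d^m(x')}$ asserted in \eqref{Le2.023}. To obtain the $1/m$ exponent one must localize to the much smaller cylinder $\Omega_{\delta(z')}(z')$ of horizontal radius comparable to the \emph{vertical} width $\delta(z')\sim\varepsilon+d^m(z')$; the paper's STEP~2 gives $\int_{\Omega_{\delta}(z')}|\nabla w|^2\le C\,\delta^{\,n-2/m}(|\psi^i|^2+\delta^{2/m}\|\psi^i\|_{C^2}^2)$, and only this finer localization produces the claimed pointwise exponent. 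Moreover, your iteration on dyadic radii $s_k=\delta^{1/m}(1+2^{-k})$ never leaves the scale $\delta^{1/m}$, so it has no ``absorption endpoint'': to close the Caccioppoli iteration one must march outward with $O(\varepsilon)$ (resp.\ $O(d^m(z'))$) increments all the way to the macroscopic scale $\sim\sqrt[m]{\varepsilon}$ (resp.\ $\sim d(z')$) where the STEP~1 global bound $\|\nabla w\|_{L^2(\Omega)}\le C\|\psi^i\|_{C^2}$ supplies the input for the geometric decay; with increments shrinking like $2^{-k}$ the factor $C\delta^2/(t-s)^2$ deteriorates geometrically with $k$ and the iteration does not converge.

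Second, the anisotropic change of variables $y'=(x'-z')/\delta^{1/m}(z')$, $y_n=(x_n-h(x'))/\delta(z')$ does \emph{not} produce a uniformly elliptic Lam\'e system: horizontal and vertical derivatives are rescaled by $\delta^{-1/m}$ and $\delta^{-1}$ respectively, so the rescaled coefficients separate by a factor $\delta^{2-2/m}\to 0$ and the ellipticity constants degenerate. The $W^{1,\infty}$ (or $W^{2,p}$ plus Sobolev) estimate quoted in STEP~3 of the paper is applied after an \emph{isotropic} rescaling by $\delta(z')$ on the cube $\Omega_{\delta(z')}(z')$, for which the rescaled system has uniformly bounded coefficients. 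This is why the paper's local energy estimate has to be proved on a cylinder of radius $\delta(z')$ rather than $\delta^{1/m}(z')$: that is the only radius for which an isotropic, uniformly elliptic rescaling is available.
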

We remark that \eqref{Le2.023} is an improvement of that in \cite{HL}, where the denominator $\sqrt{\varepsilon+d^{m}(x')}$ is improved to $\sqrt[m]{\varepsilon+d^{m}(x')}$. The sketched proof of Theorem \ref{thm86} is given in the appendix.

\section{Proof of Theorem \ref{Lthm066}}

\subsection{Estimates of $|\nabla u_{\alpha}|$}
For $\alpha=1,2,\cdots,\frac{n(n+1)}{2}$, set
$$\psi=\psi_{\alpha},\;\tilde{u}_{\alpha}:=\psi_{\alpha}\bar{v}.$$
Applying Theorem \ref{thm86} to $u_{\alpha}$, we obtain
\begin{corollary}\label{co.001}
Assume as in Theorem \ref{Lthm066}. Let $u_{\alpha},\,\alpha=1,2,\cdots,\frac{n(n+1)}{2}$, be the weak solutions of (\ref{P2.005}), respectively. Then for a sufficiently small $\varepsilon>0$, $x\in\Omega_{R}$,
\begin{align}
|\nabla(u_{\alpha}-\tilde{u}_{\alpha})(x)|&\leq\frac{C}{\sqrt[m]{\varepsilon+d^{m}(x')}},\quad\;\,\alpha=1,2,\cdots,n,\label{Le2.0028}\\
|\nabla(u_{\alpha}-\tilde{u}_{\alpha})(x)|&\leq\frac{C|x'|}{\sqrt[m]{\varepsilon+d^{m}(x')}},\quad\;\,\alpha=n+1,\cdots,\frac{n(n+1)}{2}.\label{Lea2.0028}
\end{align}
Thus, combining with the definition of $\tilde{u}_{\alpha}$, we have
\begin{align}
\frac{1}{C(\varepsilon+d^{m}(x'))}\leq|\nabla u_{\alpha}|\leq&\frac{C}{\varepsilon+d^{m}(x')},\quad\alpha=1,2,\cdots,n,\label{Le2.029}\\
|\nabla u_{\alpha}|\leq&\frac{C(\varepsilon+|x'|)}{\varepsilon+d^{m}(x')},\quad\alpha=n+1,\cdots,\frac{n(n+1)}{2},\label{Le2.0291}
\end{align}
and
\begin{align}
|\nabla_{x'}u_{\alpha}|\leq&\frac{C}{\sqrt[m]{\varepsilon+d^{m}(x')}},\quad\alpha=1,2,\cdots,n,\label{QWE0.01}\\
|\nabla_{x'}u_{\alpha}|\leq&\frac{C|x'|}{\sqrt[m]{\varepsilon+d^{m}(x')}},\quad\alpha=n+1,\cdots,\frac{n(n+1)}{2},\label{QWE0.02}
\end{align}
and for $\alpha=1,2,\cdots,\frac{n(n+1)}{2}$,
\begin{align}\label{Le2.0292}
\|\nabla u_{\alpha}\|_{L^{\infty}(\Omega\setminus\Omega_{R})}\leq C.
\end{align}

\end{corollary}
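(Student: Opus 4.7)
The plan is to apply Theorem \ref{thm86} to each $u_\alpha$ with boundary data $\psi=\psi_\alpha$, and then exploit the explicit expression $\tilde u_\alpha=\psi_\alpha\bar v$ to pass from bounds on $|\nabla(u_\alpha-\tilde u_\alpha)|$ to bounds on $|\nabla u_\alpha|$ and $|\nabla_{x'}u_\alpha|$.

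First I would split according to the block structure of the basis $\{\psi_\alpha\}$ of $\Psi$. For $\alpha=1,\dots,n$ the element $\psi_\alpha=e_\alpha$ is constant of unit length, so $|\psi_\alpha(x',\varepsilon+h_1(x'))|=1$ and Theorem \ref{thm86} immediately delivers (\ref{Le2.0028}); the additive $C\|\psi_\alpha\|_{C^2}$ remainder is absorbed because $\sqrt[m]{\varepsilon+d^m(x')}\leq C$ on $\Omega_R$. For $\alpha=n+1,\dots,n(n+1)/2$ each element has the form $\psi_\alpha=x_ke_j-x_je_k$ with $1\leq j<k\leq n$; on the upper boundary of the channel, (H2) and (H4) give $|x_n|=|\varepsilon+h_1(x')|\leq\varepsilon+C|x'|^m\leq C(\varepsilon+|x'|)$ for $|x'|\leq R$, so that $|\psi_\alpha(x',\varepsilon+h_1(x'))|\leq C(\varepsilon+|x'|)$. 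Plugging this into Theorem \ref{thm86} produces (\ref{Lea2.0028}).

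Next I would compute $\nabla\tilde u_\alpha$ directly from (\ref{Le2.019}). With $\delta(x')=\varepsilon+h_1(x')-h(x')$, one has $\partial_{x_n}\bar v=1/\delta$ and, using (H2)--(H3), $|\nabla_{x'}\bar v|\leq Cd^{m-1}/\delta$. The elementary inequality $d^{m-1}=(d^m)^{(m-1)/m}\leq\delta^{(m-1)/m}$ then upgrades this to $|\nabla_{x'}\bar v|\leq C/\delta^{1/m}$, which is the crucial sharpening. Multiplying by $\psi_\alpha$ and including the lower-order $\bar v\nabla_{x'}\psi_\alpha$ term (bounded by $C$), I would get $|\partial_{x_n}\tilde u_\alpha|\leq C|\psi_\alpha|/\delta$ and $|\nabla_{x'}\tilde u_\alpha|\leq C|\psi_\alpha|/\delta^{1/m}+C$. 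Combining these with (\ref{Le2.0028})--(\ref{Lea2.0028}) via the triangle inequality and inserting the two size estimates for $|\psi_\alpha|$ established in the previous paragraph yields the upper bounds in (\ref{Le2.029}), (\ref{Le2.0291}), (\ref{QWE0.01}) and (\ref{QWE0.02}).

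For the lower bound in (\ref{Le2.029}) I would invoke the lower half of (\ref{Le2.025}) in Theorem \ref{thm86}: when $\alpha\in\{1,\dots,n\}$ only the $\alpha$-th component of $\psi_\alpha$ is nonzero and equal to $1$, hence $|\nabla u_\alpha|=|\nabla v_\alpha|\geq 1/(C\delta(x'))$, which rewrites as the lower bound stated. Finally, (\ref{Le2.0292}) is just the $L^\infty$ assertion in the last line of Theorem \ref{thm86}. The only delicate step is the tangential estimate: without the inequality $d^{m-1}/\delta\leq\delta^{-1/m}$ one would recover only the trivial rate $C/\delta$ and the improved $m$-th-root exponent in (\ref{QWE0.01})--(\ref{QWE0.02}) would be lost. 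All remaining manipulations are routine.
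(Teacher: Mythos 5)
Your proposal takes exactly the route the paper intends: apply Theorem~\ref{thm86} componentwise with $\psi=\psi_\alpha$, read off $|\psi_\alpha|\leq 1$ on $\Gamma^+_R$ for $\alpha\leq n$ and $|\psi_\alpha(x',\varepsilon+h_1(x'))|\leq C(\varepsilon+|x'|)$ for $\alpha\geq n+1$, compute $\nabla\tilde u_\alpha=\bar v\,\nabla\psi_\alpha+\psi_\alpha\nabla\bar v$ explicitly, and use the sharpening $d^{m-1}(x')/\delta(x')\leq C\,\delta(x')^{-1/m}$ that ({\bf{\em H2}}) supplies. The paper's own proof is the single line ``Applying Theorem~\ref{thm86} to $u_\alpha$, we obtain,'' so you have supplied precisely the missing bookkeeping; the lower bound and the $L^\infty$ statement on $\Omega\setminus\Omega_R$ are likewise read off Theorem~\ref{thm86} as you say.

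One step is slurred, and it is slurred in the paper's statement too. Theorem~\ref{thm86} always returns the additive remainder $C\|\psi_\alpha\|_{C^2(\partial D_1)}$. For $\alpha\in\{1,\dots,n\}$ you correctly absorb it, since $1/\sqrt[m]{\varepsilon+d^m(x')}\geq 1/C$ on $\Omega_R$. For $\alpha\geq n+1$ you cannot: the displayed main term $|x'|/\sqrt[m]{\varepsilon+d^m(x')}$ vanishes as $x'\to 0'$ (and so does the $\varepsilon/\sqrt[m]{\varepsilon+d^m(x')}$ piece coming from $|\psi_\alpha|\leq C(\varepsilon+|x'|)$), so the honest output of your chain of inequalities is
$|\nabla(u_\alpha-\tilde u_\alpha)|\leq C|x'|/\sqrt[m]{\varepsilon+d^m(x')}+C$, and similarly $|\nabla_{x'}u_\alpha|\leq C|x'|/\sqrt[m]{\varepsilon+d^m(x')}+C$, not (\ref{Lea2.0028}) and (\ref{QWE0.02}) literally as printed. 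This is harmless downstream, because every later use of these two bounds either integrates them over $\Gamma^+_R$ (where the extra $C$ contributes only the $O(1)$ term already present in $\rho_n(\varepsilon,|\Sigma'|)$) or evaluates them at $|x'|\sim\varepsilon^{1/m}$ (where the main term is itself of order one), so you should simply state the additive constant explicitly rather than claim that Theorem~\ref{thm86} ``produces'' (\ref{Lea2.0028}) outright.
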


For problem (\ref{Lek2.012}), the solution $u_{0}$ can be split as
$$u_{0}=\sum_{l=1}^{n}u_{0l},$$
where $u_{0l},~l=1,2,\cdots,n$, satisfy
\begin{align}\label{RTP101}
\begin{cases}
\mathcal{L}_{\lambda,\mu}u_{0l}=0,\quad&
\hbox{in}\  \Omega,  \\
u_{0l}=0&\hbox{on}\ \partial{D}_{1},\\
u_{0l}=(0,\cdots,0,\varphi^{l}(x),0,\cdots,0)^{T},&\hbox{on} \ \partial{D}.
\end{cases}
\end{align}
For $x\in\Omega$, define
\begin{align*}
\tilde{u}_{0l}(x):=(0,\cdots,0,[\rho(x)\varphi^{l}(x',h(x'))+(1-\rho(x))\varphi^{l}(x)](1-\bar{v}(x)),0,\cdots,0)^{T},
\end{align*}
where $\rho\in C^{2}(\overline{\Omega})$ is defined by (\ref{Le2.021}). Then,
\begin{align*}
\tilde{u}_{0l}(x):=(0,\cdots,0,\varphi^{l}(x',h(x'))(1-\bar{v}(x)),0,\cdots,0)^{T},\quad\;\,\mathrm{in}\;\,\Omega_{R}.
\end{align*}

Likewise, applying Theorem \ref{thm86}, we derive the following estimates of $|\nabla u_{0k}|$ for the problem (\ref{RTP101}).
\begin{lemma}\label{lem89}
Assume as in Theorem \ref{Lthm066}. Let $u_{0l},~l=1,2,\cdots,n$ be the weak solution of (\ref{RTP101}). Assume that $\|\varphi\|_{C^{2}(\partial D)}>0$. Then for a sufficiently small $\varepsilon>0$, $l=1,2,\cdots,n$,
\begin{align}\label{RTP1311}
|\nabla(u_{0l}-\tilde{u}_{0l})(x)|\leq\frac{C|\varphi^{l}(x',h(x'))|}{\sqrt[m]{\varepsilon+d^{m}(x')}}+C\|\varphi^{l}\|_{C^{2}(\partial D)},\quad\;\,\mathrm{in}\;\,\Omega_{R}.
\end{align}
Consequently, for $x\in\Omega_{R}$,
\begin{align}\label{HT1231}
|\nabla_{x'}u_{0l}|\leq\frac{C|\varphi^{l}(x',h(x'))|}{\sqrt[m]{\varepsilon+d^{m}(x')}},
\end{align}
and
\begin{align}\label{Le2.026}
\frac{|\varphi^{l}(x',h(x'))|}{C(\varepsilon+d^{m}(x'))}\leq|\partial_{n}u_{0l}|\leq&\frac{C|\varphi^{l}(x',h(x'))|}{\varepsilon+d^{m}(x')}+C\|\varphi^{l}\|_{C^{2}(\partial D)},
\end{align}
and
\begin{align}\label{Le2.027}
\|\nabla u_{0l}\|_{L^{\infty}(\Omega\setminus\Omega_{R})}\leq C\|\varphi^{l}\|_{C^{2}(\partial D)}.
\end{align}

\end{lemma}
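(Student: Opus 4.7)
The plan is to mirror the proof of Theorem \ref{thm86}, noting that problem (\ref{RTP101}) is structurally the mirror image of (\ref{P2.010}) with the roles of $\partial D_{1}$ and $\partial D$ interchanged. Accordingly, the auxiliary function $\tilde{u}_{0l}$ is built from the factor $1-\bar{v}(x)$ (which vanishes on $\partial D_{1}$ and equals $1$ on $\partial D$) in place of the $\bar{v}$ used for $\tilde{v}_{i}$, so that $\tilde{u}_{0l}$ matches the prescribed Dirichlet data of $u_{0l}$ on both boundary components. The cutoff $\rho$ in the definition guarantees the same $C^{2}$ smoothness away from $\Omega_{R}$ that was exploited for $\tilde{v}_{i}$.

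First I would compute $\nabla\tilde{u}_{0l}$ explicitly on $\Omega_{R}$. Using $\bar{v}(x)=(x_{n}-h(x'))/\delta(x')$ and hypotheses (\textbf{H2})--(\textbf{H4}), the only singular direction is $\partial_{n}$, so $|\partial_{n}\tilde{u}_{0l}|$ is two-sided comparable to $|\varphi^{l}(x',h(x'))|/(\varepsilon+d^{m}(x'))$ up to a $\|\varphi^{l}\|_{C^{1}}$ error. The tangential derivatives, by contrast, pick up an extra factor $|\nabla_{x'}h|\leq Cd^{m-1}(x')$ through the chain rule, giving
\begin{align*}
|\nabla_{x'}\tilde{u}_{0l}(x)|\leq\frac{C|\varphi^{l}(x',h(x'))|\,d^{m-1}(x')}{\delta(x')}+C\|\varphi^{l}\|_{C^{1}(\partial D)}.
\end{align*}
This already produces the two-sided estimate on $|\partial_{n}\tilde{u}_{0l}|$ asserted in (\ref{Le2.026}) and the correct tangential size for the auxiliary function.

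Next, set $w:=u_{0l}-\tilde{u}_{0l}\in H^{1}_{0}(\Omega;\mathbb{R}^{n})$, which solves $\mathcal{L}_{\lambda,\mu}w=-\mathcal{L}_{\lambda,\mu}\tilde{u}_{0l}$ in $\Omega$. I would then rerun the same iteration-on-energy scheme used for Theorem \ref{thm86} --- following \cite{BLL2015,BJL2017} --- on the local energy functionals $\int_{\Omega_{t}(z')}|\nabla w|^{2}\,dx$, together with Korn's and Poincar\'{e}'s inequalities in the narrow slab. Because $\mathcal{L}_{\lambda,\mu}\tilde{u}_{0l}$ has exactly the same structural form as the corresponding right-hand side for $\tilde{v}_{i}$, only with the prefactor $\psi^{i}$ replaced by $\varphi^{l}(x',h(x'))$, the output of the iteration is precisely (\ref{RTP1311}). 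Combining (\ref{RTP1311}) with the explicit bounds on $\nabla\tilde{u}_{0l}$ and the elementary inequality $d^{m-1}(x')/\delta(x')\leq(\varepsilon+d^{m}(x'))^{-1/m}$ then yields (\ref{HT1231}) and the upper half of (\ref{Le2.026}); the matching lower bound in (\ref{Le2.026}) follows because the error $|\partial_{n}w|$ lives at the smaller scale $(\varepsilon+d^{m}(x'))^{-1/m}$, which is negligible compared with the leading $(\varepsilon+d^{m}(x'))^{-1}$ coming from $\partial_{n}\tilde{u}_{0l}$. Finally, (\ref{Le2.027}) follows from standard interior Schauder estimates applied to $\mathcal{L}_{\lambda,\mu}$ on the uniformly smooth region $\Omega\setminus\Omega_{R}$.

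The main obstacle is obtaining the sharp exponent $1/m$ in the denominator of (\ref{RTP1311}) rather than the weaker $1/2$ that the original argument of \cite{HL} would yield; this is exactly the improvement flagged in the remark following Theorem \ref{thm86}. Closing this gap requires using, inside the Caccioppoli-type iteration over thin cylindrical annuli $\Omega_{t}(z')\setminus\Omega_{s}(z')$, the additional vanishing $|\nabla_{x'}h|,|\nabla_{x'}h_{1}|\leq Cd^{m-1}$ to absorb the tangential part of the error term, so that the subsolution for the increment $\int_{\Omega_{t}(z')}|\nabla w|^{2}$ has scaling consistent with the $m$-th power geometry of the gap rather than the generic parabolic $1/2$-scaling. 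This is the one place where the argument is genuinely more delicate than a symmetry exchange of the boundaries.
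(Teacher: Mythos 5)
Your proposal matches the paper's own route: the paper dispatches Lemma \ref{lem89} with ``The proof is similar to previous one and thus omitted,'' i.e.\ it applies Theorem \ref{thm86} to problem (\ref{RTP101}) with the roles of $\partial D_1$ and $\partial D$ interchanged and $\bar v$ replaced by $1-\bar v$, exactly as you set up. Your computations of $\nabla\tilde u_{0l}$, the observation that $d^{m-1}(x')/\delta(x')\le C(\varepsilon+d^m(x'))^{-1/m}$, the fact that $w=u_{0l}-\tilde u_{0l}$ vanishes on both $\Gamma^\pm_R$ so the Poincar\'e step survives, and the final assembly of (\ref{HT1231})--(\ref{Le2.027}) are all consistent with the appendix argument. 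One small calibration: the sharp $1/m$ exponent you flag at the end as ``genuinely more delicate'' is in fact already secured inside the appendix proof of Theorem \ref{thm86} (via the $|\nabla_{x'}h|,|\nabla_{x'}h_1|\le\kappa_3 d^{m-1}$ hypotheses feeding into the estimate (\ref{QQ.103})), so for this lemma nothing new needs to be proved beyond the boundary swap --- the delicacy lives in Theorem \ref{thm86}, not here.
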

The proof is similar to previous one and thus omitted.
\subsection{Estimates of $C^{\alpha},\;\alpha=1,2,\cdots,\frac{n(n+1)}{2}$.}

For $\alpha,\beta=1,2,\cdots,\frac{n(n+1)}{2}$, denote
$$a_{\alpha\beta}:=-\int_{\partial D_{1}}\frac{\partial u_{\alpha}}{\partial\nu_{0}}\Big|_{+}\cdot\psi_{\beta},\quad Q_{\beta}[\varphi]:=\int_{\partial D_{1}}\frac{\partial u_{0}}{\partial\nu_{0}}\Big|_{+}\cdot\psi_{\beta}.$$
Multiplying the first line of (\ref{Lek2.012}) and (\ref{P2.005}) by $u_{\beta}$, respectively, and integrating by parts over $\Omega$ yields
\begin{align}\label{BWN654}
a_{\alpha\beta}=\int_{\Omega}(\mathbb{C}^{0}e(u_{\alpha}),e(u_{\beta}))\,dx,\quad Q_{\beta}[\varphi]=-\int_{\Omega}(\mathbb{C}^{0}e(u_{0}),e(u_{\beta}))\,dx.
\end{align}
For $0<r\leq2R$, denote the top boundary of $\Omega_{r}$ by
\begin{align*}
\Gamma^{+}_{r}=\left\{x\in\mathbb{R}^{n}\big|\,x_{n}=\varepsilon+h_{1}(x'),\;|x'|<r\right\}.
\end{align*}

Before estimating $a_{\alpha\alpha}$, $\alpha=1,2,\cdots,\frac{n(n+1)}{2}$, we state a Lemma, its proof referred to \cite{BJL2017}.
\begin{lemma}\label{FBC6}
There exists a positive universal constant $C$, independent of $\varepsilon$, such that
\begin{align*}
\sum^{\frac{n(n+1)}{2}}_{\alpha,\beta=1}a_{\alpha\beta}\xi_{\alpha}\xi_{\beta}\geq\frac{1}{C},\quad\;\,\forall\;\xi\in\mathbb{R}^{\frac{n(n+1)}{2}},\;|\xi|=1.
\end{align*}
\end{lemma}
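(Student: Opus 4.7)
\textbf{Proof proposal for Lemma \ref{FBC6}.}

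The plan is to interpret the quadratic form as the elastic energy of a single linear combination of the $u_{\alpha}$'s, and then extract a uniform lower bound by applying the second Korn inequality and the trace theorem on a subdomain that stays away from the narrow neck. Setting $w:=\sum_{\alpha=1}^{\frac{n(n+1)}{2}}\xi_{\alpha}u_{\alpha}$, the bilinearity of the integrand in \eqref{BWN654} gives
\begin{align*}
\sum_{\alpha,\beta=1}^{\frac{n(n+1)}{2}}a_{\alpha\beta}\xi_{\alpha}\xi_{\beta}=\int_{\Omega}\bigl(\mathbb{C}^{0}e(w),\,e(w)\bigr)\,dx,
\end{align*}
and by linearity of \eqref{P2.005}, $w$ satisfies $\mathcal{L}_{\lambda,\mu}w=0$ in $\Omega$, $w=0$ on $\partial D$, and $w=\bar{\psi}_{\xi}:=\sum_{\alpha}\xi_{\alpha}\psi_{\alpha}$ on $\partial D_{1}$. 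The ellipticity bound \eqref{ellip} then reduces the lemma to producing a constant $c>0$ independent of $\varepsilon$ such that $\int_{\Omega}|e(w)|^{2}\,dx\ge c\,|\xi|^{2}$.

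Next, I would restrict to the fixed subdomain $U:=\Omega\setminus\overline{\Omega_{R}}$, which avoids the thin region. For small $\varepsilon>0$, $U$ is an $O(\varepsilon)$-perturbation of the Lipschitz limit domain obtained at $\varepsilon=0$, and $\partial U$ contains two pieces whose $(n-1)$-measures are bounded below uniformly in $\varepsilon$: a piece $\Gamma_{0}\subset\partial D\cap\partial U$ where $w$ vanishes, and a piece $\Gamma:=\partial D_{1}\cap\partial U$ where $w=\bar{\psi}_{\xi}$. The second Korn inequality with partial zero trace on $\Gamma_{0}$, pulled back via a near-identity diffeomorphism to the $\varepsilon=0$ domain, gives
\begin{align*}
\|w\|_{H^{1}(U)}^{2}\le C\int_{U}|e(w)|^{2}\,dx\le C\int_{\Omega}|e(w)|^{2}\,dx,
\end{align*}
and the standard trace theorem on $U$ applied to $\Gamma$ yields
\begin{align*}
\|\bar{\psi}_{\xi}\|_{L^{2}(\Gamma)}^{2}=\|w\|_{L^{2}(\Gamma)}^{2}\le C\|w\|_{H^{1}(U)}^{2}\le C\int_{\Omega}|e(w)|^{2}\,dx,
\end{align*}
with all constants uniform in $\varepsilon$.

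Finally, since $\bar{\psi}_{\xi}\in\Psi$ is a rigid motion and the restriction map $\Psi\to L^{2}(\Gamma)$ is injective (because $\Gamma$ contains an open piece of a smooth hypersurface), the equivalence of norms on the finite-dimensional space $\Psi$ supplies the reverse inequality $\|\bar{\psi}_{\xi}\|_{L^{2}(\Gamma)}^{2}\ge c\,|\xi|^{2}$, with $c$ uniform in $\varepsilon$ because $\Gamma$ is essentially fixed up to an $O(\varepsilon)$ translation. Chaining these estimates and specializing to $|\xi|=1$ completes the proof. The main obstacle is verifying the $\varepsilon$-uniformity of the Korn and trace constants on the $\varepsilon$-dependent domain $U$; I would handle this by constructing an explicit $C^{1}$ near-identity diffeomorphism from the $\varepsilon=0$ domain onto $U$ and transferring the two inequalities with constants depending continuously on its $C^{1}$-norm.
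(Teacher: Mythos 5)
The paper gives no proof of Lemma~\ref{FBC6} at all; it simply defers to \cite{BJL2017}. Your argument is correct and is the standard route used there: express the quadratic form as the elastic energy $\int_{\Omega}(\mathbb{C}^{0}e(w),e(w))\,dx$ of $w=\sum_{\alpha}\xi_{\alpha}u_{\alpha}$, drop to the fixed bulk region $U=\Omega\setminus\overline{\Omega_{R}}$, apply Korn's inequality with the zero trace on the positive-measure piece $\Gamma_{0}\subset\partial D$, use the trace theorem on $\Gamma\subset\partial D_{1}$, and close the loop with the injectivity of the restriction $\Psi\to L^{2}(\Gamma)$ together with finite-dimensionality of $\Psi$. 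Two points you correctly flag but should make explicit in a final write-up: (i) $U$ is uniformly Lipschitz in $\varepsilon$ because the gap between the two graphs at $\{|x'|=R\}$ is bounded below independently of $\varepsilon$ (since $\Sigma'\subset B'_{r}$ with $r<R$), which keeps the corner angles at the cylindrical cut non-degenerate; and (ii) the Korn, trace, and rigid-motion constants must be shown uniform in $\varepsilon$, for which your near-identity $C^{1}$ diffeomorphism onto the $\varepsilon=0$ domain (with constants depending continuously on its $C^{1}$-norm) is the right device. Note also that the injectivity of $\Psi\to L^{2}(\Gamma)$ holds even if the relevant piece of $\partial D_{1}$ were flat: if $a+Bx=0$ on a relatively open set of a hyperplane, then $a=0$ and $B=0$ follows from skew-symmetry, so no genericity assumption on $\partial D_{1}$ is needed.
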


\begin{lemma}\label{lemmabc}
For $n\geq2$, we obtain that for $\alpha=1,2,\cdots,n$,
\begin{align}
\frac{|\Sigma'|+\varepsilon\rho_{0}(n,m;\varepsilon)}{C\varepsilon}\leq&a_{\alpha\alpha}\leq\frac{C(|\Sigma'|+\varepsilon\rho_{0}(n,m;\varepsilon))}{\varepsilon};\label{Le2.033}
\end{align}
for $\alpha=n+1,\cdots,\frac{n(n+1)}{2}$,
\begin{align}
\frac{|\Sigma'|^{\frac{n+1}{n-1}}+\varepsilon\rho_{2}(n,m;\varepsilon)}{C\varepsilon}\leq&a_{\alpha\alpha}\leq\frac{C(|\Sigma'|^{\frac{n+1}{n-1}}+\varepsilon\rho_{2}(n,m;\varepsilon))}{\varepsilon};\label{Le2.035}
\end{align}
and, for $\alpha,\beta=1,2,\cdots,\frac{n(n+1)}{2},\;\alpha\neq\beta$,
\begin{align}
|a_{\alpha\beta}|\leq&C\rho_{n}(\varepsilon,|\Sigma'|),\label{Le2.036}
\end{align}
where
\begin{align}\label{Le2.0366}
\rho_{n}(\varepsilon,|\Sigma'|)=
\begin{cases}
\frac{|\Sigma'|}{\sqrt[m]{\varepsilon}}+|\ln\varepsilon|,\quad\;\,&n=2,\\
\frac{|\Sigma'|}{\sqrt[m]{\varepsilon}}+|\Sigma'|^{\frac{n-2}{n-1}}|\ln\varepsilon|+1,&n\geq3.
\end{cases}
\end{align}
\end{lemma}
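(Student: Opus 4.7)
The plan is to use the energy identity (\ref{BWN654}) and the decomposition $u_\alpha=\tilde u_\alpha+w_\alpha$, where $\tilde u_\alpha=\psi_\alpha\bar v$ in $\Omega_R$ is the auxiliary function from Section~2 (extended globally via the cutoff $\rho$) and $w_\alpha:=u_\alpha-\tilde u_\alpha\in H^1_0(\Omega)$. Since $\mathcal{L}_{\lambda,\mu}u_\alpha=0$ in $\Omega$ and $w_\beta$ vanishes on $\partial\Omega$, integration by parts gives $\int_\Omega(\mathbb{C}^{0}e(u_\alpha),e(w_\beta))\,dx=0$, so
\begin{equation*}
a_{\alpha\beta}=\int_\Omega(\mathbb{C}^{0}e(\tilde u_\alpha),e(\tilde u_\beta))\,dx+\int_\Omega(\mathbb{C}^{0}e(w_\alpha),e(\tilde u_\beta))\,dx.
\end{equation*}
The contribution from $\Omega\setminus\Omega_R$ is controlled by (\ref{Le2.0292}), so the blow-up analysis is localized in $\Omega_R$, where $\tilde u_\alpha$ is explicit and $|\nabla w_\alpha|$ is controlled by Corollary~\ref{co.001}.

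For the diagonal bounds (\ref{Le2.033})--(\ref{Le2.035}), the upper estimate follows from the variational characterization: $u_\alpha$ minimizes the energy in the class of $v\in H^1(\Omega)$ with $v|_{\partial D_1}=\psi_\alpha$ and $v|_{\partial D}=0$, so $a_{\alpha\alpha}\le\int_\Omega(\mathbb{C}^{0}e(\tilde u_\alpha),e(\tilde u_\alpha))\,dx$. Since $e(\psi_\alpha)=0$ for any rigid motion, one has $e(\tilde u_\alpha)_{ij}=\tfrac12(\psi_\alpha^{i}\partial_j\bar v+\psi_\alpha^{j}\partial_i\bar v)$, whose dominant contribution in $\Omega_R$ is driven by $\partial_n\bar v=1/\delta(x')$. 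A direct computation gives
\begin{equation*}
\int_{\Omega_R}(\mathbb{C}^{0}e(\tilde u_\alpha),e(\tilde u_\alpha))\,dx\sim\int_{B'_R}\frac{|\psi_\alpha(x',0)|^{2}}{\delta(x')}\,dx',
\end{equation*}
and splitting $B'_R=\Sigma'\cup(B'_R\setminus\Sigma')$ with $\delta\equiv\varepsilon$ on $\Sigma'$ by (H1) and $\delta\sim\varepsilon+d^m(x')$ off $\Sigma'$ by (H2) yields $|\Sigma'|/\varepsilon+\rho_0(n,m;\varepsilon)$ for $\alpha\le n$ (where $|\psi_\alpha|=1$) and $|\Sigma'|^{(n+1)/(n-1)}/\varepsilon+\rho_2(n,m;\varepsilon)$ for $\alpha>n$ (where $|\psi_\alpha|\sim|x'|$). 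The matching lower bound uses ellipticity (\ref{ellip}) together with the Young-type inequality $|e(u_\alpha)|^{2}\ge(1-\eta)|e(\tilde u_\alpha)|^{2}-C_\eta|e(w_\alpha)|^{2}$; by Corollary~\ref{co.001} the error integral is bounded by $\int_{B'_R}(\varepsilon+d^m)^{1-2/m}\,dx'=O(1)$ for $m\ge 2$, hence negligible against the main term.

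For the off-diagonal bound (\ref{Le2.036}), the main integrand expands as
\begin{equation*}
(\mathbb{C}^{0}e(\tilde u_\alpha),e(\tilde u_\beta))=(\lambda+\mu)(\psi_\alpha\cdot\nabla\bar v)(\psi_\beta\cdot\nabla\bar v)+\mu(\psi_\alpha\cdot\psi_\beta)|\nabla\bar v|^{2}.
\end{equation*}
For $\alpha\ne\beta$, every surviving term either carries a tangential derivative $\partial_j\bar v=O(d^{m-1}/\delta)$ with $j\le n-1$, or a linear factor $x_l$ whose integral over the rotationally symmetric $\Sigma'=B'_r$ and over concentric shells of $B'_R\setminus\Sigma'$ vanishes by symmetry; the symmetry defect of $\delta$ allowed by the sandwich (H2) produces only bounded corrections, thanks to the regularity (H4). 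The dominant contribution to $a_{\alpha\beta}$ therefore comes from the cross-term $\int_{\Omega_R}(\mathbb{C}^{0}e(w_\alpha),e(\tilde u_\beta))\,dx$, bounded pointwise by $C|\psi_\beta|\,\delta^{-1-1/m}$; integrating in $x_n$ reduces this to $C\int_{B'_R}|\psi_\beta|\delta^{-1/m}\,dx'$, and splitting once more at $\Sigma'$ yields $|\Sigma'|/\sqrt[m]{\varepsilon}$ from the inner region and $|\Sigma'|^{(n-2)/(n-1)}|\ln\varepsilon|+1$ (or $|\ln\varepsilon|$ when $n=2$) from the annulus, precisely matching $\rho_n(\varepsilon,|\Sigma'|)$ in (\ref{Le2.0366}).

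The principal obstacle is the cancellation analysis in the off-diagonal main term: for each admissible pair $(\alpha,\beta)$ with $\alpha\ne\beta$ one must identify exactly which integrals of the form $\int_{B'_R}x_l/\delta\,dx'$ arise from the couplings $\psi_\alpha\cdot\psi_\beta$ and $\psi_\alpha\cdot\nabla\bar v$, and show -- using the sandwich (H2) together with (H4) -- that the deviation of $\delta$ from a radial approximation $\varepsilon+\kappa d^m$ contributes only $O(\rho_n)$ error. Once this cancellation is organized, every remaining quantity reduces to the elementary template $\int_0^R d^\gamma(\varepsilon+d^m)^{-\tau}\,dd$, whose standard splitting at $d=\varepsilon^{1/m}$ produces precisely the exponents in the definitions of $\rho_0$, $\rho_2$, and $\rho_n$.
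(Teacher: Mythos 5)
Your overall strategy—rewriting $a_{\alpha\beta}$ through the auxiliary function $\tilde u_\alpha=\psi_\alpha\bar v$ and the corrector $w_\alpha=u_\alpha-\tilde u_\alpha$, then splitting the estimate into a main term and an error controlled by Corollary~\ref{co.001}—is the same framework the paper uses. The main cosmetic difference is that you estimate the off-diagonal entries via the volume integral $\int_\Omega(\mathbb{C}^0 e(\tilde u_\alpha),e(\tilde u_\beta))+\int_\Omega(\mathbb{C}^0 e(w_\alpha),e(\tilde u_\beta))$ while the paper works with the boundary representation $a_{\alpha\beta}=-\int_{\partial D_1}\tfrac{\partial u_\alpha}{\partial\nu_0}\big|_+\cdot\psi_\beta$ and exploits $|\nu_i|\le Cd^{m-1}$ directly; these are equivalent reorganizations, and both reduce to the same one-dimensional template integrals and to the same (somewhat delicate) symmetry cancellations for terms like $\int_{B'_R}x_l/\delta\,dx'$.

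There is, however, a genuine gap in your diagonal lower bound. You obtain $a_{\alpha\alpha}\ge(1-\eta)\int_{\Omega_R}|e(\tilde u_\alpha)|^2-C_\eta\int_{\Omega_R}|e(w_\alpha)|^2+O(1)$, with the error term $\int_{\Omega_R}|e(w_\alpha)|^2=O(1)$ for $m\ge2$. When $\Sigma'=B'_r$ with $r>0$ the main term $\sim|\Sigma'|/\varepsilon\to\infty$ dominates the $O(1)$ error and you are done. But the lemma is also stated for $\Sigma'=\{0'\}$, and in the regime $m<n-1$ (for $\alpha\le n$) or $m<n+1$ (for $\alpha>n$) the main term $\int_{B'_R}|\psi_\alpha|^2/\delta\,dx'\sim\rho_0(n,m;\varepsilon)$ (resp.\ $\rho_2$) is itself $O(1)$. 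Your inequality then reads $a_{\alpha\alpha}\ge C_1-C_2$ with incomparable universal constants, which does not yield the required positive lower bound $a_{\alpha\alpha}\ge 1/C$. The paper closes exactly this case by a separate argument: it invokes the uniform positive-definiteness of the full matrix $(a_{\alpha\beta})$ (Lemma~\ref{FBC6}), giving $a_{\alpha\alpha}=\sum a_{\alpha\beta}e_\alpha^\beta e_\alpha^\gamma\ge 1/C$ directly; the Young-type comparison is only used when the main term actually blows up. You would need to import that compactness-type positivity result (or an explicit barrier) to complete the lower bound in the $\Sigma'=\{0'\}$, small-$m$ regime. The rest of the proposal (upper bounds via the variational characterization, off-diagonal bound structure and the $\rho_n$ bookkeeping) is consistent with the paper's argument, though both you and the paper leave the symmetry-cancellation step in the off-diagonal main term at a sketch level.
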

\begin{proof}
{\bf Step 1.} Estimate of $a_{\alpha\alpha}$, $\alpha=1,2,\cdots,n$. If $\Sigma'=B'_{r}(0')$, on one hand, it follows from (\ref{ellip}) and (\ref{Le2.0292}) that
\begin{align}
a_{\alpha\alpha}=&\int_{\Omega}(\mathbb{C}^{0}e(u_{\alpha}),e(u_{\alpha}))\,dx\leq C\int_{\Omega}|\nabla u_{\alpha}|^{2}dx\notag\\
\leq&C\left(\int_{B_{r}'}\frac{dx'}{\varepsilon}+\int_{B_{R}'\setminus B'_{r}}\frac{dx'}{\varepsilon+(|x'|-r)^{m}}\right)+C\notag\\
\leq&\frac{C(|\Sigma'|+\sqrt[m]{\varepsilon}|\Sigma'|^{\frac{n-2}{n-1}}+\varepsilon\rho_{0}(n,m;\varepsilon))}{\varepsilon}.\label{MMNMN1}
\end{align}

On the other hand, from the definition of $\tilde{u}_{\alpha}$ and (\ref{Le2.0028}), we have
\begin{align*}
a_{\alpha\alpha}\geq\frac{1}{C}\int_{\Omega}|e(u_{\alpha})|^{2}dx
\geq&\frac{1}{2C}\int_{\Omega_{R}}|e(\tilde{u}_{\alpha})|^{2}dx-C\int_{\Omega_{R}}|e(u_{\alpha}-\tilde{u}_{\alpha})|^{2}dx\\
\geq&\frac{1}{2C}\int_{\Omega_{R}}|\partial_{n}\bar{v}|^{2}dx-C\\
\geq&\frac{|\Sigma'|+\sqrt[m]{\varepsilon}|\Sigma'|^{\frac{n-2}{n-1}}+\varepsilon\rho_{0}(n,m;\varepsilon)}{C\varepsilon}.
\end{align*}

If $\Sigma'=\{0'\}$, by using Lemma \ref{FBC6} and (\ref{MMNMN1}), for $m<n-1$, we have
\begin{align*}
\frac{1}{C}\leq a_{\alpha\alpha}\leq C,
\end{align*}
The case for $m\geq n-1$ is the same as before. Therefore, (\ref{Le2.033}) is established.

{\bf Step 2.} Estimate of $a_{\alpha\alpha}$, $\alpha=n+1,\cdots,\frac{n(n+1)}{2}$. If $\Sigma'=B'_{r}(0')$, then
\begin{align}
|a_{\alpha\alpha}|\leq\,C\int_{\Omega}|\nabla u_{\alpha}|^{2}dx
\leq&C\left(\int_{B_{r}'}\frac{|x'|^{2}}{\varepsilon}\,dx'+\int_{B_{R}'\setminus B'_{r}}\frac{|x'|^{2}}{\varepsilon+(|x'|-r)^{m}}\right)+C\notag\\
\leq&\frac{C(|\Sigma'|^{\frac{n+1}{n-1}}+\sqrt[m]{\varepsilon}|\Sigma'|^{\frac{n}{n-1}}+\varepsilon\rho_{2}(n,m;\varepsilon))}{\varepsilon}.\label{MMNMN3}
\end{align}

On the other hand, it is easy to see that there exist two indexes $j$ and $k$ such that $1\leq j<k\leq n$ and $\tilde{u}_{\alpha}^{k}=x_{j}\bar{v}$. Then, utilizing (\ref{Lea2.0028}), we obtain
\begin{align*}
|a_{\alpha\alpha}|\geq\frac{1}{C}\int_{\Omega}|e(u_{\alpha})|^{2}dx
\geq&\frac{1}{2C}\int_{\Omega_{R}}|e(\tilde{u}_{\alpha})|^{2}dx-C\int_{\Omega_{R}}|e(u_{\alpha}-\tilde{u}_{\alpha})|^{2}dx\\
\geq&\frac{1}{2C}\int_{\Omega_{R}}|x_{j}\partial_{n}\bar{v}|^{2}dx-C\\
\geq&\frac{|\Sigma'|^{\frac{n+1}{n-1}}+\sqrt[m]{\varepsilon}|\Sigma'|^{\frac{n}{n-1}}+\varepsilon\rho_{2}(n,m;\varepsilon)}{C\varepsilon}.
\end{align*}

If $\Sigma'=\{0'\}$, in light of Lemma \ref{FBC6} and (\ref{MMNMN3}), for $m<n+1$,
\begin{align*}
\frac{1}{C}\leq a_{\alpha\alpha}\leq C.
\end{align*}
The case for $m\geq n+1$ is the same as above. Thus, (\ref{Le2.035}) holds.

{\bf Step 3.} Estimate of $a_{\alpha\beta}$, $\alpha,\beta=1,2,\cdots,n,\,\alpha\neq\beta$.

By symmetry, we only need to consider the case of $\alpha=1,2,\cdots,n,$ $\beta=1,\cdots,n-1$, provided that $\alpha\neq\beta$.
\begin{align*}
a_{\alpha\beta}=-\int_{\partial D_{1}}\frac{\partial u_{\alpha}}{\partial\nu_{0}}\Big|_{+}\cdot\psi_{\beta}
=&-\lambda\int_{\Gamma_{R}^{+}}\sum_{k=1}^n\partial_{k}u_{\alpha}^{k}\nu_\beta
-\mu\int_{\Gamma_{R}^{+}}\sum_{j=1}^{n-1}\left(\partial_{j}u_{\alpha}^{\beta}+\partial_{\beta}u_{\alpha}^{j}\right)\nu_j\\
&-\mu\int_{\Gamma_{R}^{+}}\partial_{n}u^{\beta}_{\alpha}\nu_{n}-\mu\int_{\Gamma_{R}^{+}}\partial_{\beta}u_{\alpha}^{n}\nu_{n}+O(1),\\
=&:\lambda\mathrm{I}_{\alpha\beta}^{1}+\mu(\mathrm{I}_{\alpha\beta}^{2}+\mathrm{I}_{\alpha\beta}^{3}+\mathrm{I}_{\alpha\beta}^{4})+O(1),\\
\end{align*}
where
$$\nu=\frac{(\nabla_{x'}h_{1}(x'),-1)}{\sqrt{1+|\nabla_{x'}h_{1}(x')|^{2}}}.$$
In view of ({\bf{\em H3}}), we know that for $i=1,\cdots,n-1$,
\begin{align}\label{AB1.01}
|\nu_{i}|\leq Cd^{m-1}(x'),\quad|\nu_{n}|\leq1.
\end{align}
Thus, using (\ref{Le2.029}) and (\ref{AB1.01}),
\begin{align*}
|\mathrm{I}_{\alpha\beta}^{1}|\leq&\int_{\Gamma_{R}^{+}}\left|\sum_{k=1}^n\partial_{k}u_{\alpha}^{k}\nu_\beta\right|\leq\int_{\Gamma^{+}_{R}}\frac{Cd^{m-1}(x')}{\varepsilon+d^{m}(x')}\leq C\rho_{n}(\varepsilon,|\Sigma'|),
\end{align*}
where $\rho_{n}(\varepsilon,|\Sigma'|)$ is defined by (\ref{Le2.0366}). It follows from (\ref{QWE0.01}) and (\ref{AB1.01}) that
\begin{align*}
|\mathrm{I}_{\alpha\beta}^{2}|\leq&\int_{\Gamma_{R}^{+}}\sum_{j=1}^{n-1}\left|\left(\partial_{j}u_{\alpha}^{\beta}+\partial_{\beta}u_{\alpha}^{j}\right)\nu_j\right|\leq\int_{B'_{R}}\frac{Cd^{m-1}(x')}{(\varepsilon+d^{m}(x'))^{\frac{1}{m}}}\leq C,
\end{align*}
and
\begin{align*}
|\mathrm{I}_{\alpha\beta}^{4}|\leq&\int_{\Gamma_{R}^{+}}\left|\partial_{\beta}u_{\alpha}^{n}\nu_{n}\right|\leq\int_{B'_{R}}\frac{C}{(\varepsilon+d^{m}(x'))^{\frac{1}{m}}}\leq C\rho_{n}(\varepsilon,|\Sigma'|),
\end{align*}
while, by using (\ref{Le2.0028}) and $\tilde{u}^{\beta}_{\alpha}=0$, we obtain
\begin{align*}
|\mathrm{I}^{3}_{\alpha\beta}|\leq&\left|\int_{\Gamma^{+}_{R}}\partial_{n}u_{\alpha}^{\beta}\right|\leq\left|\int_{\Gamma^{+}_{R}}\partial_{n}(u_{\alpha}-\tilde{u}_{\alpha})^{\beta}\right|
\leq\int_{B_{R}'}\frac{C}{(\varepsilon+d^{m}(x'))^{\frac{1}{m}}}\leq C\rho_{n}(\varepsilon,|\Sigma'|).
\end{align*}
Consequently,
\begin{align}\label{LYW20}
|a_{\alpha\beta}|\leq C\rho_{n}(\varepsilon,|\Sigma'|).
\end{align}
{\bf Step 4.} Estimate of $a_{\alpha\beta}$, $\alpha=1,2,\cdots,n,\,\beta=n+1,\cdots,\frac{n(n+1)}{2},\alpha\neq\beta$.

We here take the case when $\alpha=1,\,\beta=n+1$ for example. Since $\psi_{n+1}=(x_{2},-x_{1},0,\cdots,0)^{T}$, making use of the boundedness of $|\nabla u_{\alpha}|$ on $\partial D_{1}\setminus\Gamma^{+}_{R}$, we obtain
\begin{align*}
a_{1,n+1}=&-\int_{\Gamma^{+}_{R}}\bigg(\lambda\sum^{n}_{k=1}\partial_{k}u_{1}^{k}\nu_{1}+\mu\sum^{n}_{j=1}\big(\partial_{1}u_{1}^{j}+\partial_{j}u_{1}^{1}\big)\nu_{j}\bigg)x_{2}\\
&+\int_{\Gamma^{+}_{R}}\bigg(\lambda\sum^{n}_{k=1}\partial_{k}u_{1}^{k}\nu_{2}+\mu\sum^{n}_{j=1}\big(\partial_{2}u_{1}^{j}+\partial_{j}u_{1}^{2}\big)\nu_{j}\bigg)x_{1}+O(1),\\
=&:-\mathrm{I}_{1,n+1}+\mathrm{II}_{1,n+1}+O(1).
\end{align*}
As for $\mathrm{I}_{1,n+1}$, we split it into three parts
\begin{align*}
\mathrm{I}^{1}_{1,n+1}=&-\int_{\Gamma^{+}_{R}}\bigg(\lambda\sum^{n}_{k=1}\partial_{k}u_{1}^{k}\nu_{1}+\mu\sum^{n-1}_{j=1}\big(\partial_{1}u_{1}^{j}+\partial_{j}u_{1}^{1}\big)\nu_{j}\bigg)x_{2},\\
\mathrm{I}^{2}_{1,n+1}=&-\mu\int_{\Gamma^{+}_{R}}\partial_{1}u_{1}^{n}\nu_{n}x_{2},\\
\mathrm{I}^{3}_{1,n+1}=&-\mu\int_{\Gamma^{+}_{R}}\partial_{n}u_{1}^{1}\nu_{n}x_{2}.
\end{align*}
It follows from (\ref{Le2.029}) and (\ref{AB1.01}) that
\begin{align*}
|\mathrm{I}^{1}_{1,n+1}|\leq&\int_{\Gamma^{+}_{R}}\bigg|\bigg(\lambda\sum^{n}_{k=1}\partial_{k}u_{1}^{k}\nu_{1}+\mu\sum^{n-1}_{j=1}\big(\partial_{1}u_{1}^{j}+\partial_{j}u_{1}^{1}\big)\nu_{j}\bigg)x_{2}\bigg|\\
\leq&\int_{B'_{R}}\frac{Cd^{m-1}(x')}{\varepsilon+d^{m}(x')}\leq C\rho_{n}(\varepsilon,|\Sigma'|),
\end{align*}
while, in view of $\tilde{u}^{n}_{1}=0$ and (\ref{Le2.0028}),
\begin{align*}
|\mathrm{I}^{2}_{1,n+1}|\leq&\mu\int_{\Gamma^{+}_{R}}\left|\partial_{1}(u_{1}-\tilde{u}_{1})^{n}\nu_{n}x_{2}\right|\leq\int_{B'_{R}}\frac{C}{\sqrt[m]{\varepsilon+d^{m}(x')}}\leq C\rho_{n}(\varepsilon,|\Sigma'|).
\end{align*}
Note that
\begin{align}\label{KTL0.11}
|x_{n}|=|\varepsilon+h_{1}(x')|\leq C(\varepsilon+d^{m}(x')).
\end{align}
Then it follows from (\ref{Le2.0028}), (\ref{KTL0.11}) and the symmetry of $B'_{R}$ that
\begin{align*}
|\mathrm{I}^{3}_{1,n+1}|\leq&\mu\int_{\Gamma^{+}_{R}}\left|\partial_{n}(u_{1}-\tilde{u}_{1})^{1}\nu_{n}x_{2}\right|+\mu~\Big|\int_{B'_{R}}\frac{x_{2}}{\delta}\Big|\\
\leq&
\begin{cases}
\mu\int_{B'_{R}}\frac{Cd^{m}(x_{1})}{\sqrt[m]{\varepsilon+d^{m}(x_{1})}}+\mu\int_{B'_{R}}\frac{Cd^{m}(x_{1})}{\varepsilon+d^{m}(x_{1})},&\quad n=2,\\
\mu\int_{B'_{R}}\frac{C|x'|}{\sqrt[m]{\varepsilon+d^{m}(x')}},&\quad n\geq3
\end{cases}\\
\leq& C\rho_{n}(\varepsilon,|\Sigma'|).
\end{align*}
Thus,
$$|\mathrm{I}_{1,n+1}|\leq C\rho_{n}(\varepsilon,|\Sigma'|).$$

By the same argument, we get $|\mathrm{II}_{1,n+1}|\leq C\rho_{n}(\varepsilon,|\Sigma'|)$. So
\begin{align}\label{LYW21}
|a_{1,n+1}|\leq C\rho_{n}(\varepsilon,|\Sigma'|).
\end{align}

{\bf Step 5.} Estimate of $a_{\alpha\beta}$, $\alpha,\beta=n+1,\cdots,\frac{n(n+1)}{2},\,\alpha\neq\beta,\,n\geq3$.

Take the case that $\alpha=n+1,$ $\beta=n+2$ for example. The other cases are the same. Note that $\tilde{u}_{n+1}=(\bar{v}x_{2},-\bar{v}x_{1},0,\cdots,0)^{T}$ and $\psi_{n+2}=(x_{3},0,-x_{1},0,\cdots,0)^{T}$. Similarly as before, we have
\begin{align*}
a_{n+1,n+2}=&-\int_{\Gamma^{+}_{R}}\bigg(\lambda\sum^{n}_{k=1}\partial_{k}u_{n+1}^{k}\nu_{1}+\mu\sum^{n}_{j=1}\big(\partial_{1}u^{j}_{n+1}+\partial_{j}u_{n+1}^{1}\big)\nu_{j}\bigg)x_{3}\\
&+\int_{\Gamma^{+}_{R}}\bigg(\lambda\sum^{n}_{k=1}\partial_{k}u_{n+1}^{k}\nu_{3}+\mu\sum^{n}_{j=1}\big(\partial_{3}u_{n+1}^{j}+\partial_{j}u_{n+1}^{3}\big)\nu_{j}\bigg)x_{1}+O(1)\\
=&:-\mathrm{I}_{n+1,n+2}+\mathrm{II}_{n+1,n+2}+O(1).
\end{align*}

With regard to $\mathrm{I}_{n+1,n+2}$, it can be split
\begin{align*}
\mathrm{I}_{n+1,n+2}^{1}=&\int_{\Gamma^{+}_{R}}\bigg(\lambda\sum^{n}_{k=1}\partial_{k}u_{n+1}^{k}\nu_{1}+\mu\sum^{n}_{j=1}\partial_{1}u^{j}_{n+1}\nu_{j}+\mu\sum^{n-1}_{j=1}\partial_{j}u_{n+1}^{1}\nu_{j}\bigg)x_{3},
\end{align*}
and
\begin{align*}
\mathrm{I}_{n+1,n+2}^{2}=&\int_{\Gamma^{+}_{R}}\mu\partial_{n}u_{n+1}^{1}\nu_{n}x_{3}.
\end{align*}
Utilizing (\ref{QWE0.02}) and (\ref{AB1.01}), we have
\begin{align*}
|\mathrm{I}_{n+1,n+2}^{1}|\leq&\int_{B'_{R}}\frac{C}{\sqrt[m]{\varepsilon+d^{m}(x')}}\leq C\rho_{n}(\varepsilon,|\Sigma'|),
\end{align*}
while, by the symmetry and (\ref{Lea2.0028}),
\begin{align*}
|\mathrm{I}_{n+1,n+2}^{2}|\leq&\int_{\Gamma^{+}_{R}}\mu\left|\partial_{n}(u_{n+1}-\tilde{u}_{n+1})^{1}\nu_{n}x_{3}\right|+\mu\left|\int_{B'_{R}}\frac{x_{2}x_{3}}{\delta}\right|\leq C\rho_{n}(\varepsilon,|\Sigma'|).
\end{align*}
Thus,
$$|\mathrm{I}_{n+1,n+2}|\leq C\rho_{n}(\varepsilon,|\Sigma'|).$$
Similarly, we obtain $|\mathrm{II}_{n+1,n+2}|\leq C\rho_{n}(\varepsilon,|\Sigma'|)$. Hence,
\begin{align}\label{LYW22}
|a_{n+1,n+2}|\leq C\rho_{n}(\varepsilon,|\Sigma'|).
\end{align}

Combining {\bf Step 3}, {\bf 4} and {\bf 5}, (\ref{Le2.036}) holds.

\end{proof}

Before estimating $C^{\alpha},$ $\alpha=1,2,\cdots,\frac{n(n+1)}{2}$, we first state a result. Its proof is a slight modification of that in \cite{BLL2017}.
\begin{lemma}\label{lemma866}
For $n\geq1$, let $A,D$ be $n\times n$ invertible matrices and $B$ and $C$ be $n\times n$ matrices satisfying, for some $0<\theta<1$, $\gamma_{1},\gamma_{3}>1$, and $0<\gamma_{2}\leq1$,
\begin{align*}
\|A^{-1}\|\leq\frac{1}{\theta\gamma_{1}},\quad\;\,\|B\|+\|C\|\leq\frac{1}{\theta\gamma_{2}},\quad\|D^{-1}\|\leq\frac{1}{\theta\gamma_{3}}.
\end{align*}
Then there exist $\bar{\gamma}=\bar{\gamma}(n)>1$ and $C(n)>1$, such that if $\gamma_{1}\gamma_{2}^{2}\geq\frac{\bar{\gamma}(n)}{\theta^{4}}$,
\begin{gather*}
\begin{pmatrix} A&B \\  C&D
\end{pmatrix}
\end{gather*}
is invertible. Moreover,
\begin{align*}
\begin{pmatrix} E_{11}&E_{12} \\  E_{12}^{T}&E_{22}
\end{pmatrix}:=
\begin{pmatrix} A&B \\  C&D
\end{pmatrix}^{-1}-
\begin{pmatrix} A^{-1}&0 \\  0&D^{-1}
\end{pmatrix}
\end{align*}
satisfies
\begin{align*}
\|E_{11}\|\leq\frac{C(n)}{\theta^{5}\gamma^{2}_{1}\gamma^{2}_{2}},\quad\|E_{12}\|\leq\frac{C(n)}{\theta^{3}\gamma_{1}\gamma_{2}},\quad\mathrm{and}\quad\|E_{22}\|\leq\frac{C(n)}{\theta^{5}\gamma_{1}\gamma^{2}_{2}\gamma^{2}_{3}}.
\end{align*}

\end{lemma}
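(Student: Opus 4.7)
The plan is to invoke the standard block Schur-complement formula for the inverse of $M:=\begin{pmatrix}A & B\\ C & D\end{pmatrix}$ and to control every piece by a single Neumann-series estimate. First I would record that $\|B\|+\|C\|\leq 1/(\theta\gamma_{2})$ forces $\|B\|,\|C\|\leq 1/(\theta\gamma_{2})$ and hence $\|B\|\|C\|\leq 1/(\theta^{2}\gamma_{2}^{2})$. Combining with the bounds on $\|A^{-1}\|,\|D^{-1}\|$ and the fact $\gamma_{3}\geq 1$, this yields
\begin{align*}
\|A^{-1}BD^{-1}C\|,\ \|D^{-1}CA^{-1}B\|\ \leq\ \frac{1}{\theta^{4}\gamma_{1}\gamma_{2}^{2}\gamma_{3}}\ \leq\ \frac{1}{\theta^{4}\gamma_{1}\gamma_{2}^{2}}.
\end{align*}

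Second, I would choose $\bar{\gamma}(n)\geq 2$ so that the hypothesis $\gamma_{1}\gamma_{2}^{2}\geq\bar{\gamma}(n)/\theta^{4}$ makes both of the quantities above at most $1/2$. Then the Neumann series for $(I-A^{-1}BD^{-1}C)^{-1}$ and $(I-D^{-1}CA^{-1}B)^{-1}$ converge with operator norm $\leq 2$, and factoring $A-BD^{-1}C=A(I-A^{-1}BD^{-1}C)$ and $D-CA^{-1}B=D(I-D^{-1}CA^{-1}B)$ gives
\begin{align*}
\|(A-BD^{-1}C)^{-1}\|\leq\frac{2}{\theta\gamma_{1}},\qquad \|(D-CA^{-1}B)^{-1}\|\leq\frac{2}{\theta\gamma_{3}}.
\end{align*}
In particular $M$ is invertible and admits the Schur representation
\begin{align*}
M^{-1}=\begin{pmatrix} A^{-1}+A^{-1}B(D-CA^{-1}B)^{-1}CA^{-1} & -A^{-1}B(D-CA^{-1}B)^{-1}\\ -(D-CA^{-1}B)^{-1}CA^{-1} & (D-CA^{-1}B)^{-1}\end{pmatrix}.
\end{align*}

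Finally, reading off the three blocks of $M^{-1}-\mathrm{diag}(A^{-1},D^{-1})$ and using the resolvent identity $(D-CA^{-1}B)^{-1}-D^{-1}=(D-CA^{-1}B)^{-1}(CA^{-1}B)D^{-1}$ to rewrite $E_{22}$, I would multiply through the individual norm bounds to obtain
\begin{align*}
\|E_{11}\|&\leq\|A^{-1}\|^{2}\|B\|\|C\|\|(D-CA^{-1}B)^{-1}\|\leq\frac{C(n)}{\theta^{5}\gamma_{1}^{2}\gamma_{2}^{2}\gamma_{3}},\\
\|E_{12}\|&\leq\|A^{-1}\|\|B\|\|(D-CA^{-1}B)^{-1}\|\leq\frac{C(n)}{\theta^{3}\gamma_{1}\gamma_{2}\gamma_{3}},\\
\|E_{22}\|&\leq\|(D-CA^{-1}B)^{-1}\|\|C\|\|A^{-1}\|\|B\|\|D^{-1}\|\leq\frac{C(n)}{\theta^{5}\gamma_{1}\gamma_{2}^{2}\gamma_{3}^{2}}.
\end{align*}
Using $\gamma_{3}\geq 1$ in the first two lines to drop the extra factor of $\gamma_{3}$ from the denominator recovers exactly the bounds claimed. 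The argument is almost entirely bookkeeping; the only genuine step is the Neumann-series convergence, and the only place requiring care is the calibration of $\bar{\gamma}(n)$ together with the consistent use of $\gamma_{3}\geq 1$ to absorb unused factors, so I do not anticipate a substantive obstacle.
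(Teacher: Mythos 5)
Your proof is correct and follows what appears to be the standard route: Schur complement factorization together with a Neumann-series bound on $(I-A^{-1}BD^{-1}C)^{-1}$ and $(I-D^{-1}CA^{-1}B)^{-1}$, then absorbing the surplus factor of $\gamma_{3}$ using $\gamma_{3}>1$. The paper itself does not give a proof here but cites \cite{BLL2017} and calls this a "slight modification" of the argument there, which is in the same perturbative spirit, so your approach is essentially the one the authors have in mind. One cosmetic point worth noting: the lemma as stated allows $C$ arbitrary yet labels the lower-left block of $M^{-1}-\operatorname{diag}(A^{-1},D^{-1})$ as $E_{12}^{T}$; in general the Schur formula gives $-(D-CA^{-1}B)^{-1}CA^{-1}$ there, which is the transpose of $E_{12}$ only in the symmetric case $C=B^{T}$, $A=A^{T}$, $D=D^{T}$ (which is how the lemma is used in the paper). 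Your argument bounds that block by the same quantity either way, so nothing is lost, but it would be cleaner to say so explicitly rather than only estimating $E_{12}$.
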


\begin{prop}\label{proposition123}
Let $C^{\alpha}$ and $\varphi$ be defined as before, $\alpha=1,2,\cdots,\frac{n(n+1)}{2}$. Then, for a sufficiently small $\varepsilon>0$,
\begin{align}\label{Le11.011}
|C^{\alpha}|\leq&\frac{C\varepsilon}{|\Sigma'|+\varepsilon\rho_{0}(n,m;\varepsilon)}Q_{\beta;\mathrm{I}}[\varphi],\quad\alpha=1,2,\cdots,n,
\end{align}
and
\begin{align}\label{Le11.012}
|C^{\alpha}|\leq\frac{C\varepsilon}{|\Sigma'|^{\frac{n+1}{n-1}}+\varepsilon\rho_{2}(n,m;\varepsilon)}Q_{\beta;\mathrm{II}}[\varphi],\quad\alpha=n+1,\cdots,\frac{n(n+1)}{2},
\end{align}
where $Q_{\beta;\mathrm{I}}[\varphi]$ and $Q_{\beta;\mathrm{II}}[\varphi]$ are defined by (\ref{NCE1}).

\end{prop}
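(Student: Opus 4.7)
The plan is to reduce the estimation of $C^\alpha$ to the inversion of the linear algebraic system obtained from the third line of (\ref{La.002}), and then exploit the block structure of the coefficient matrix established by Lemma \ref{lemmabc}.

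First I would substitute the decomposition $u = \sum_{\alpha=1}^{n(n+1)/2} C^\alpha u_\alpha + u_0$ (from Section 2.2) into the orthogonality condition $\int_{\partial D_1}\frac{\partial u}{\partial \nu_0}\big|_+ \cdot \psi_\beta = 0$. Using the definitions of $a_{\alpha\beta}$ and $Q_\beta[\varphi]$ introduced just before Lemma \ref{FBC6}, this yields the linear system
\begin{equation*}
\sum_{\alpha=1}^{n(n+1)/2} a_{\alpha\beta}\, C^\alpha \;=\; Q_\beta[\varphi], \qquad \beta = 1,2,\ldots,\tfrac{n(n+1)}{2}.
\end{equation*}
Partition the matrix $A=(a_{\alpha\beta})$ into a $2\times 2$ block form along the split $\{1,\ldots,n\}\cup\{n+1,\ldots,n(n+1)/2\}$, corresponding to translation-type and rotation-type basis elements of $\Psi$. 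By Lemma \ref{lemmabc}, the diagonal block $A_{11}$ has entries of order $\gamma_1 := (|\Sigma'|+\varepsilon\rho_0(n,m;\varepsilon))/\varepsilon$, the diagonal block $A_{22}$ has entries of order $\gamma_3:=(|\Sigma'|^{(n+1)/(n-1)}+\varepsilon\rho_2(n,m;\varepsilon))/\varepsilon$, and the off-diagonal coupling $A_{12}$ is controlled by $\rho_n(\varepsilon,|\Sigma'|)$ given in (\ref{Le2.0366}).

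Second, I would establish the blockwise invertibility. For $\xi\in\mathbb{R}^n$, writing $v_\xi:=\sum_{\alpha\leq n}\xi_\alpha u_\alpha$ and expanding as in (\ref{BWN654}), one has $\sum_{\alpha,\beta\leq n}a_{\alpha\beta}\xi_\alpha\xi_\beta = \int_\Omega (\mathbb{C}^0 e(v_\xi),e(v_\xi))\,dx$, which can be bounded below by $\gamma_1|\xi|^2/C$ using the auxiliary function $\sum_\alpha \xi_\alpha \tilde u_\alpha$ and Corollary \ref{co.001}, repeating Step 1 of Lemma \ref{lemmabc} verbatim. This gives $\|A_{11}^{-1}\|\leq C/\gamma_1$, and analogously $\|A_{22}^{-1}\|\leq C/\gamma_3$ via Step 2 of Lemma \ref{lemmabc}. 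Now apply Lemma \ref{lemma866} with $\gamma_2:=1/\rho_n(\varepsilon,|\Sigma'|)$; the hypothesis $\gamma_1\gamma_2^2\geq \bar\gamma/\theta^4$ needs to be checked case-by-case (comparing $\gamma_1\sim\rho_0/\varepsilon$ against $\rho_n^2$ across the regimes $m\lessgtr n-1$, $\Sigma'=B'_r$ versus $\{0'\}$, etc.), and it holds for $\varepsilon$ sufficiently small. The lemma then yields
\begin{equation*}
A^{-1} \;=\; \begin{pmatrix} A_{11}^{-1} & 0 \\ 0 & A_{22}^{-1} \end{pmatrix} + E,
\end{equation*}
with $\|E_{11}\|\leq C/(\gamma_1^2\gamma_2^2)$, $\|E_{12}\|\leq C/(\gamma_1\gamma_2)$, $\|E_{22}\|\leq C/(\gamma_1\gamma_2^2\gamma_3^2)$, all of which are subdominant relative to the block-diagonal term.

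Third, applying $A^{-1}$ to the right-hand side $\vec Q$ and recalling definitions (\ref{NCE1}) of $Q_{\beta;\mathrm{I}}$, $Q_{\beta;\mathrm{II}}$, one reads off
\begin{equation*}
|C^\alpha|\leq \frac{C}{\gamma_1}\,Q_{\beta;\mathrm{I}}[\varphi] + (\text{error terms})\quad (\alpha\leq n), \qquad |C^\alpha|\leq \frac{C}{\gamma_3}\,Q_{\beta;\mathrm{II}}[\varphi] + (\text{error terms})\quad (\alpha>n),
\end{equation*}
from which (\ref{Le11.011}) and (\ref{Le11.012}) follow after noting that $Q_{\beta;\mathrm{I}}\lesssim Q_{\beta;\mathrm{II}}$-type comparisons absorb the cross-terms (since $Q_{\beta;\mathrm{II}}$ involves rotations and contains a power of $|\Sigma'|$ or $\varepsilon^{1/m}$ that matches the smallness of $E_{12}$). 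The main obstacle I anticipate is this last bookkeeping step: verifying that the $E$ correction in the block inversion, when multiplied by $Q_{\beta;\mathrm{II}}[\varphi]$ in the upper-block equation (respectively $Q_{\beta;\mathrm{I}}[\varphi]$ in the lower-block), is indeed absorbed by the leading block-diagonal bound across all regimes delineated by $\rho_0, \rho_2, \rho_n$ and by the dichotomy $\Sigma'=B'_r$ vs. $\{0'\}$—a purely computational but delicate case analysis.
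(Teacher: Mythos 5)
Your overall strategy matches the paper's: derive the linear system $\sum_\alpha a_{\alpha\beta}C^\alpha = Q_\beta[\varphi]$ from the orthogonality condition in (\ref{La.002}), block-partition along $\{1,\dots,n\}$ versus $\{n+1,\dots,\frac{n(n+1)}{2}\}$, use Lemma~\ref{lemmabc} for the entry sizes, and invoke Lemma~\ref{lemma866} to control the block inverse, which is exactly what the paper does via (\ref{GMD1})--(\ref{GMD2}).

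There is, however, one concrete error: you assert that the hypothesis $\gamma_1\gamma_2^2\geq\bar\gamma/\theta^4$ of Lemma~\ref{lemma866} ``holds for $\varepsilon$ sufficiently small'' across all regimes. This is false in the regime $\Sigma'=\{0'\}$ and $m<n-1$, which is nonvacuous for $n\geq4$ (the paper allows $m\geq2$ when $\Sigma'=\{0'\}$). There one has $\rho_0(n,m;\varepsilon)=1$ and, since $|\Sigma'|=0$ and $n\geq3$, also $\rho_n(\varepsilon,0)=1$, so $\gamma_1=\gamma_2=1$ and the product $\gamma_1\gamma_2^2$ stays bounded rather than growing; the threshold in Lemma~\ref{lemma866} may simply fail. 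The paper therefore treats this case separately: since by Lemma~\ref{lemmabc} all entries $a_{\alpha\beta}$ are then $O(1)$ and by Lemma~\ref{FBC6} the full matrix $(a_{\alpha\beta})$ is uniformly positive definite, its inverse is uniformly bounded and one gets $|C^\alpha|\leq C\max_\beta|Q_\beta[\varphi]|$ directly --- which agrees with (\ref{Le11.011})--(\ref{Le11.012}) because $\varepsilon/(|\Sigma'|+\varepsilon\rho_0)=1$ in that regime. You need to add this branch; as written, your ``case-by-case check'' reaches the wrong conclusion in exactly the one case that requires a different argument. (A minor notational slip: your $\gamma_1\sim\rho_0/\varepsilon$ is the right scale when $|\Sigma'|>0$, but for $\Sigma'=\{0'\}$ the paper normalizes $\gamma_1=\rho_0$, with the constants absorbed into $\theta$; this does not affect correctness but obscures the failure mode above.)
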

\begin{proof}
Utilizing decomposition (\ref{Le2.015}) of $|\nabla u|$ and recalling the fourth line of (\ref{La.002}), we derive that
\begin{align}\label{Le3.078}
\sum^{\frac{n(n+1)}{2}}_{\alpha=1}C^{\alpha}a_{\alpha\beta}=Q_{\beta}[\varphi],\quad\beta=1,2,\cdots,\frac{n(n+1)}{2}.
\end{align}
Define
\begin{align*}
X^{1}=&\left(C^{1},\cdots,C^{n}\right)^{T},\quad X^{2}=\left(C^{n+1},\cdots,C^{\frac{n(n+1)}{2}}\right)^{T}\\
Y^{1}=&\left(Q_{1}[\varphi],Q_{2}[\varphi],\cdots,Q_{n}[\varphi]\right)^{T},\quad Y^{2}=\left(Q_{n+1}[\varphi],\cdots,Q_{\frac{n(n+1)}{2}}[\varphi]\right)^{T}
\end{align*}
and
\begin{gather*}A=\begin{pmatrix} a_{11}&\cdots&a_{1n} \\\\ \vdots&\ddots&\vdots\\\\a_{n1}&\cdots&a_{nn}\end{pmatrix}  ,\quad
B=\begin{pmatrix} a_{1,n+1}&\cdots&a_{1,\frac{n(n+1)}{2}} \\\\ \vdots&\ddots&\vdots\\\\a_{n,n+1}&\cdots&a_{n,\frac{n(n+1)}{2}}\end{pmatrix} ,\end{gather*}\begin{gather*}
D=\begin{pmatrix} a_{n+1,n+1}&\cdots&a_{n+1,\frac{n(n+1)}{2}} \\\\ \vdots&\ddots&\vdots\\\\a_{\frac{n(n+1)}{2},n+1}&\cdots&a_{\frac{n(n+1)}{2},\frac{n(n+1)}{2}} \end{pmatrix}.
\end{gather*}
Therefore, in light of the symmetry of $a_{\alpha\beta}$, (\ref{Le3.078}) can be rewritten as
\begin{gather*}
\begin{pmatrix} A&B \\  B^T&D
\end{pmatrix}
\begin{pmatrix}
X^{1}\\
X^{2}
\end{pmatrix}=
\begin{pmatrix}
Y^{1}\\
Y^{2}
\end{pmatrix}.
\end{gather*}

If $\Sigma'=\{0'\}$, by utilizing Lemma \ref{FBC6}, we deduce that the matrix
\begin{gather*}
\begin{pmatrix} A&B \\  B^T&D
\end{pmatrix}
\end{gather*}
is positive definite, thus invertible. For $m<n-1$, it follows from Lemma \ref{lemmabc} that
\begin{align*}
|X^{1}|\leq C.
\end{align*}
As for $m\geq n-1$, matrices $A,B,D$ satisfy the assumptions of Lemma \ref{lemma866} with $\gamma_{1}=\rho_{0}(n,m;\varepsilon)$, $\gamma_{2}=\frac{1}{\rho_{n}(\varepsilon,|\Sigma'|)}$, $\gamma_{3}=\rho_{2}(n,m;\varepsilon)$, and $\theta=\frac{1}{C}$. Then
\begin{align}\label{GMD1}
X^{1}=&\frac{1}{\det A}A^{\ast}Y^{1}+O\left(\left(\frac{\rho_{n}(\varepsilon,|\Sigma'|)}{\rho_{0}(n,m;\varepsilon)}\right)^{2}\right),
\end{align}
and
\begin{align}\label{GMD2}
X^{2}=&\frac{1}{\det D}D^{\ast}Y^{2}+O\left(\frac{[\rho_{n}(\varepsilon,|\Sigma'|)]^{2}}{\rho_{0}(n,m;\varepsilon)[\rho_{2}(n,m;\varepsilon)]^{2}}\right),
\end{align}
where $A^{\ast}$ and $D^{\ast}$ are the adjoint matrix of $A$ and $D$, respectively. Thus, in view of (\ref{GMD1}) and (\ref{GMD2}), we complete the proof of (\ref{Le11.011}) and (\ref{Le11.012}) if $\Sigma'=\{0'\}$.

By the same argument, we also obtain that (\ref{Le11.011}) and (\ref{Le11.012}) hold if $\Sigma'=B_{r}'(0')$.

\end{proof}

\subsection{Proof of Theorem \ref{Lthm066}}
Recalling decomposition (\ref{Le2.015}) for $|\nabla u|$ and making use of the results derived in Corollary \ref{co.001}, Lemma \ref{lem89} and Proposition \ref{proposition123}, we deduce that for $x\in\Omega_{R}$,
\begin{align*}
|\nabla u(x)|\leq&C\Bigg(\frac{Q_{\beta;\mathrm{I}}[\varphi]}{\varepsilon+d^{m}(x')}\frac{\varepsilon}{|\Sigma'|+\varepsilon\rho_{0}(n,m;\varepsilon)}+\frac{|\varphi(x',h(x'))|}{\varepsilon+d^{m}(x')}\notag\\
&+\frac{Q_{\beta;\mathrm{II}}[\varphi]}{\varepsilon+d^{m}(x')}\frac{(\varepsilon+|x'|)\varepsilon}{|\Sigma'|^{\frac{n+1}{n-1}}+\varepsilon\rho_{2}(n,m;\varepsilon)}+\|\varphi\|_{C^{2}(\partial D)}\Bigg).
\end{align*}
This completes the proof of Theorem \ref{Lthm066}.
\begin{remark}
We remark that Theorem \ref{Lthm066} also holds for a bounded convex and axisymmetric region $\Sigma'\subset\mathbb{R}^{n-1}$, such as $\Sigma'=\{x'\in\mathbb{R}^{n-1}|\sum^{n-1}_{i=1}\big(\frac{|x_{i}|}{a_{i}}\big)^{m}\leq1,~a_{i}>0,~m\geq2\}$. The detailed proof is left to the interested readers.
\end{remark}

\section{Proof of Proposition \ref{lekm323}}

%
\subsection{Proof of Proposition \ref{lekm323}}
{\bf Step 1.} Proof of (\ref{HND1112}) under ({\bf{\em A1}}). In light of (\ref{BWN654}), it follows from assumption ({\bf{\em A1}}), Corollary \ref{co.001} and Lemma \ref{lem89} that for $\beta=1,2,\cdots,n$,
\begin{align*}
|Q_{\beta}[\varphi]|=&\left|\int_{\Omega}(\mathbb{C}^{0}e(u_{0}),e(u_{\beta}))\,dx\right|
\leq\int_{\Omega}C|\nabla u_{0}||\nabla u_{\beta}|\,dx\\
\leq&\int_{\Omega_{R}}\frac{C|\varphi(x',h(x'))|}{(\varepsilon+|x'|^{m})^{2}}\,dx+C\|\varphi\|_{C^{2}(\partial D)}\\
\leq&C\eta\int_{|x'|<R}\frac{|x'|^{k}}{\varepsilon+|x'|^{m}}+C\|\varphi\|_{C^{2}(\partial D)},\\
\leq&C(\eta\rho_{k}(n,m;\varepsilon)+\|\varphi\|_{C^{2}(\partial D)}).
\end{align*}

Similarly, in light of assumption ({\bf{\em A2}}), we obtain that for $\beta=n+1,\cdots,\frac{n(n+1)}{2}$,
\begin{align*}
|Q_{\beta}[\varphi]|\leq&\int_{\Omega_{R}}\frac{C|\varphi(x',h(x'))||x'|}{(\varepsilon+|x'|^{m})^{2}}\,dx+C\|\varphi\|_{C^{2}(\partial D)}\\
\leq&C\eta\int_{|x'|<R}\frac{|x'|^{k+1}}{\varepsilon+|x'|^{m}}\,dx+C\|\varphi\|_{C^{2}(\partial D)}\\
\leq&C(\eta\rho_{k+1}(n,m;\varepsilon)+\|\varphi\|_{C^{2}(\partial D)}),
\end{align*}
Then (\ref{HND1116}) under ({\bf{\em A2}}) is proved.

{\bf Step 2.} Proof of (\ref{HND1116}) under ({\bf{\em A1}}). Take $\beta=n+1$ for instance. Denote
\begin{align*}
Q_{n+1}[\varphi]=&\sum^{n}_{j=1}\int_{\partial D_{1}}\frac{\partial u_{0j}}{\partial\nu_{0}}\Big|_{+}\cdot\psi_{n+1}=:\sum^{n}_{j=1}Q_{n+1,j}[\varphi],
\end{align*}
where $u_{0j}$, $j=1,2,\cdots,n$, are defined by (\ref{RTP101}). By definition,
\begin{align*}
Q_{n+1,j}[\varphi]=&\int_{\partial D_{1}}\left[\lambda\sum^{n}_{k=1}\partial_{k}u_{0j}^{k}\nu_{1}+\mu\sum^{n}_{i=1}(\partial_{i}u_{0j}^{1}+\partial_{1}u_{0j}^{i})\nu_{i}\right]x_{2}\\
&-\int_{\partial D_{1}}\left[\lambda\sum^{n}_{k=1}\partial_{k}u_{0j}^{k}\nu_{2}+\mu\sum^{n}_{i=1}(\partial_{i}u_{0j}^{2}+\partial_{2}u_{0j}^{i})\nu_{i}\right]x_{1}.
\end{align*}

{\bf Step 2.1.} If $n=2$, for $j=1$, we split $Q_{3,1}[\varphi]$ as follows:
\begin{align*}
Q_{3,1}^{1}[\varphi]=&\int_{\partial D_{1}}\left[\lambda\sum^{2}_{k=1}\partial_{k}u_{01}^{k}\nu_{1}+\mu\sum^{2}_{i=1}(\partial_{i}u_{01}^{1}+\partial_{1}u_{01}^{i})\nu_{i}\right]x_{2},\\
Q_{3,1}^{2}[\varphi]=&-\int_{\partial D_{1}}\Big(\lambda\partial_{1}u_{01}^{1}\nu_{2}+\mu(\partial_{1}u_{01}^{2}+\partial_{2}u_{01}^{1})\nu_{1}\Big)x_{1},
\end{align*}
and
\begin{align*}
Q_{3,1}^{3}[\varphi]=&-(\lambda+2\mu)\int_{\partial D_{1}}\partial_{2}u_{01}^{2}\nu_{2}x_{1}.
\end{align*}
Due to the fact that $|x_{2}|=|\varepsilon+h_{1}(x_{1})|\leq C(\varepsilon+|x_{1}|^{m})$ and $|\nu_{1}|\leq C|x_{1}|^{m-1}$ on $\Gamma^{+}_{R}$, it follows from Lemma \ref{lem89} that
\begin{align*}
|Q_{3,1}^{1}[\varphi]|\leq C(\eta+\|\varphi\|_{C^{2}(\partial D)}),\quad |Q_{3,1}^{2}[\varphi]|\leq C(\eta+\|\varphi\|_{C^{2}(\partial D)}),
\end{align*}
while, in light of $\tilde{u}_{01}^{2}=0$, we have
\begin{align*}
|Q_{3,1}^{3}[\varphi]|=&|\lambda+2\mu|\Big|\int_{\Gamma^{+}_{R}}\partial_{2}(u_{01}^{2}-\tilde{u}_{01}^{2})\nu_{2}x_{1}\Big|+C\|\varphi\|_{C^{2}(\partial D)}\leq  C(\eta+\|\varphi\|_{C^{2}(\partial D)}).
\end{align*}
Thus
\begin{align}\label{LCN1}
|Q_{3,1}[\varphi]|\leq&C(\eta+\|\varphi\|_{C^{2}(\partial D)}).
\end{align}
For $j=2$, $Q_{3,2}[\varphi]$ can be divided into two parts in the following.
\begin{align*}
Q_{3,2}^{1}[\varphi]=&\int_{\partial D_{1}}\left[\lambda\sum^{2}_{k=1}\partial_{k}u_{02}^{k}\nu_{1}+\mu\sum^{2}_{i=1}(\partial_{i}u_{02}^{1}+\partial_{1}u_{02}^{i})\nu_{i}\right]x_{2}\\
&-\int_{\partial D_{1}}\Big(\lambda\partial_{1}u_{02}^{1}\nu_{2}+\mu(\partial_{1}u_{02}^{2}+\partial_{2}u_{02}^{1})\nu_{1}\Big)x_{1},
\end{align*}
and
\begin{align*}
Q_{3,2}^{2}[\varphi]=&-(\lambda+2\mu)\int_{\partial D_{1}}\partial_{2}u_{02}^{2}\nu_{2}x_{1}.
\end{align*}
Similarly as before, making use of Lemma \ref{lem89}, we have
\begin{align*}
|Q_{3,2}^{1}[\varphi]|\leq C(\eta+\|\varphi\|_{C^{2}(\partial D)}).
\end{align*}
In view of ({\bf{\em A1}}), we obtain
\begin{align}\label{LVCZ1}
|Q_{3,2}^{2}[\varphi]|\leq&|\lambda+2\mu|\left|\int_{\Gamma^{+}_{R}}\partial_{2}\tilde{u}_{02}^{2}\nu_{2}x_{1}+\int_{\Gamma^{+}_{R}}\partial_{2}(u_{02}^{2}-\tilde{u}_{02}^{2})\nu_{2}x_{1}\right|+C\|\varphi\|_{C^{2}(\partial D)}\notag\\
\leq&|\lambda+2\mu|\left|\int_{|x_{1}|<R}\frac{\varphi^{2}(x_{1},h(x_{1}))x_{1}}{\varepsilon+h_{1}(x_{1})-h(x_{1})}\right|+C\|\varphi\|_{C^{2}(\partial D)}\notag\\
\leq&C\|\varphi\|_{C^{2}(\partial D)}.
\end{align}
Then
\begin{align*}
|Q_{3,2}[\varphi]|\leq C(\eta+\|\varphi\|_{C^{2}(\partial D)}).
\end{align*}
This, together with (\ref{LCN1}), yields that
\begin{align*}
|Q_{3}[\varphi]|\leq C(\eta+\|\varphi\|_{C^{2}(\partial D)}).
\end{align*}

{\bf Step 2.2.} If $n\geq3$, for $j=1$, $Q_{n+1,1}[\varphi]$ can be split as follows:
\begin{align*}
Q_{n+1,1}^{1}[\varphi]=&\int_{\partial D_{1}}\left[\lambda\sum^{n}_{k=1}\partial_{k}u_{01}^{k}\nu_{1}+\mu\sum^{n-1}_{i=1}(\partial_{i}u_{01}^{1}+\partial_{1}u_{01}^{i})\nu_{i}+\mu\partial_{1}u_{01}^{n}\nu_{n}\right]x_{2}\\
&-\int_{\partial D_{1}}\left[\lambda\sum^{n}_{k=1}\partial_{k}u_{01}^{k}\nu_{2}+\mu\sum^{n}_{i=1}(\partial_{i}u_{01}^{2}+\partial_{2}u_{01}^{i})\nu_{i}\right]x_{1},\\
Q_{n+1,1}^{2}[\varphi]=&\int_{\partial D_{1}}\mu\partial_{n}u_{01}^{1}\nu_{n}x_{2}.
\end{align*}
In view of the fact that $\tilde{u}_{01}^{2}=\tilde{u}_{01}^{n}=0$, it follows from Lemma (\ref{lem89}) that
\begin{align*}
|Q_{n+1,1}^{1}[\varphi]|\leq&C(\eta+\|\varphi\|_{C^{2}(\partial D)}).
\end{align*}
Combining ({\bf{\em H5}}), ({\bf{\em A1}}) and Lemma \ref{lem89}, we have
\begin{align}\label{PMNW1}
|Q_{n+1,1}^{2}[\varphi]|\leq&\mu\left|\int_{|x'|<R}\frac{\varphi^{1}(x',h(x'))x_{2}}{\varepsilon+h_{1}(x')-h(x')}\right|+C\|\varphi\|_{C^{2}(\partial D)}
\leq\,C\|\varphi\|_{C^{2}(\partial D)}.
\end{align}

By the same argument, we deduce that for $j=2,\cdots,n$, $|Q_{n+1,j}[\varphi]|\leq C(\eta+\|\varphi\|_{C^{2}(\partial D)})$. Then,
\begin{align*}
|Q_{n+1}[\varphi]|\leq&C(\eta+\|\varphi\|_{C^{2}(\partial D)}).
\end{align*}

{\bf Step 3.} Proof of (\ref{HND1112}) under ({\bf{\em A2}}). Take $\beta=1$ for example. The other cases are the same. Denote
\begin{align*}
Q_{1}[\varphi]=&\sum^{n}_{j=1}\int_{\partial D_{1}}\frac{\partial u_{0j}}{\partial\nu_{0}}\Big|_{+}\cdot\psi_{1}=:\sum^{n}_{j=1}Q_{1j}[\varphi],
\end{align*}
where $u_{0j}$, $j=1,2,\cdots,n$, are defined by (\ref{RTP101}). By definition,
\begin{align*}
Q_{11}[\varphi]=&\int_{\partial D_{1}}\left[\lambda\sum^{n}_{k=1}\partial_{k}u_{01}^{k}\nu_{1}+\mu\sum^{n}_{i=1}(\partial_{1}u_{01}^{i}+\partial_{i}u_{01}^{1})\nu_{i}\right].
\end{align*}
To estimate $Q_{11}[\varphi]$, we first split it into two parts in the following.
\begin{align*}
Q_{11}^{1}[\varphi]=&\int_{\partial D_{1}}\left[\lambda\sum^{n}_{k=1}\partial_{k}u_{01}^{k}\nu_{1}+\mu\sum^{n-1}_{i=1}(\partial_{1}u_{01}^{i}+\partial_{i}u_{01}^{1})\nu_{i}+\mu\partial_{1}u_{01}^{n}\nu_{n}\right],\\
Q_{11}^{2}[\varphi]=&\mu\int_{\partial D_{1}}\partial_{n}u_{01}^{1}\nu_{n}.
\end{align*}
Utilizing Lemma \ref{lem89}, we derive
\begin{align}\label{KKQ1}
|Q_{11}^{1}[\varphi]|\leq\int_{\partial D_{1}\cap\Gamma^{+}_{R}}C\eta|x'|^{k-1}+C\|\varphi\|_{C^{2}(\partial D)}\leq C(\eta+\|\varphi\|_{C^{2}(\partial D)}).
\end{align}

As for $Q_{11}^{2}[\varphi]$, we can split it as follows:
\begin{align*}
Q_{11}^{2}[\varphi]=&\mu\int_{\partial D_{1}\cap\Gamma^{+}_{R}}\partial_{n}(u_{01}^{1}-\tilde{u}_{01}^{1})\nu_{n}+\mu\int_{\partial D_{1}\cap\Gamma^{+}_{R}}\partial_{n}\tilde{u}_{01}^{1}\nu_{n}+O(1)\|\varphi\|_{C^{2}(\partial D)},\\
=&:Q_{11}^{2,1}[\varphi]+Q_{11}^{2,2}[\varphi]+O(1)\|\varphi\|_{C^{2}(\partial D)}.
\end{align*}
It follows from (\ref{RTP1311}) that
\begin{align}\label{KKQ2}
|Q_{11}^{2,1}[\varphi]|\leq\int_{\partial D_{1}\cap\Gamma^{+}_{R}}C\eta|x'|^{k-1}\leq C\eta.
\end{align}
Making use of ({\bf{\em H5}}) and ({\bf{\em A2}}), we obtain
\begin{align}\label{PMNW3}
|Q_{11}^{2,2}[\varphi]|=&\mu\left|\int_{|x'|<R}\frac{\varphi^{1}(x',h(x'))}{\varepsilon+h_{1}(x')-h(x')}\right|=0.
\end{align}

Next, due to the fact that $\tilde{u}_{0j}^{1}=0,$ $j=2,\cdots,n$, we have
\begin{align*}
Q_{1j}[\varphi]=&\int_{\partial D_{1}}\left[\lambda\sum^{n}_{k=1}\partial_{k}u_{0j}^{k}\nu_{1}+\mu\sum^{n}_{i=1}(\partial_{1}u_{0j}^{i}+\partial_{i}u_{0j}^{1})\nu_{i}\right]\\
=&\int_{\partial D_{1}}\Bigg[\lambda\sum^{n}_{k=1}\partial_{k}u_{0j}^{k}\nu_{1}+\mu\sum^{n-1}_{i=1}(\partial_{1}u_{0j}^{i}+\partial_{i}u_{0j}^{1})\nu_{i}\\
&\qquad\quad+\mu\partial_{1}u_{0j}^{n}\nu_{n}+\mu\partial_{n}(u_{0j}^{1}-\tilde{u}_{0j}^{1})\nu_{n}\Bigg].
\end{align*}
Making use of Lemma \ref{lem89} again, we derive that for $j=2,\cdots,n$,
$$|Q_{1j}[\varphi]|\leq C(\eta+\|\varphi\|_{C^{2}(\partial D)}).$$
This, together with (\ref{KKQ1})--(\ref{PMNW3}), leads to $|Q_{1}[\varphi]|\leq C(\eta+\|\varphi\|_{C^{2}(\partial D)})$.

{\bf Step 4.} Proof of (\ref{HND1112}) and (\ref{HND1116}) under ({\bf{\em A3}}). Due to assumptions ({\bf{\em H5}}) and ({\bf{\em A3}}), we deduce that for $i,j=1,\cdots,n-1$, $i\neq j$, $\varphi^{i}(x',h(x'))x_{j}$ is odd with respect to $x_{i}$, and $\varphi^{n}(x',h(x'))x_{j}$ is odd with respect to $x_{1}$ or $x_{2}$. Then (\ref{LVCZ1}), (\ref{PMNW1}) and (\ref{PMNW3}) also hold. We thus obtain
\begin{align*}
|Q_{1}[\varphi]|\leq&C(\eta+\|\varphi\|_{C^{2}(\partial D)}),\quad|Q_{n+1}[\varphi]|\leq C(\eta+\|\varphi\|_{C^{2}(\partial D)}).
\end{align*}
By the same argument, we derive that $|Q_{\beta}[\varphi]|\leq C(\eta+\|\varphi\|_{C^{2}(\partial D)})$ for $\beta=1,2,\cdots,\frac{n(n+1)}{2}$.

\section{Proof of Theorem \ref{MMM896}}
Similarly as before, we can obtain

\begin{lemma}\label{KM323}
Assume as in Theorem \ref{MMM896}. Then,

$(a)$ If $(\mathbf{\Phi1})$ or $(\mathbf{\Phi2})$ holds, for $\beta=1,2,\cdots,n$,
\begin{align}\label{LLCCAA1}
\frac{\eta\rho_{k}(n,m;\varepsilon)}{C}-C\|\varphi\|_{C^{2}(\partial D)}\leq|Q_{\beta}[\varphi]|\leq C(\eta\rho_{k}(n,m;\varepsilon)+\|\varphi\|_{C^{2}(\partial D)}),
\end{align}
and, for $\beta=n+1,\cdots,\frac{n(n+1)}{2}$, if $(\mathbf{\Phi1})$ holds,
\begin{align}\label{LLCCAA2}
|Q_{\beta}[\varphi]|\leq C(\eta+\|\varphi\|_{C^{2}(\partial D)}),
\end{align}
while, if $(\mathbf{\Phi2})$ holds,
\begin{align}\label{Lllk2}
|Q_{\beta}[\varphi]|\leq C(\eta\rho_{2}(n,m;\varepsilon)+\|\varphi\|_{C^{2}(\partial D)});
\end{align}

$(b)$ If $(\mathbf{\Phi3})$, $(\mathbf{\Phi4})$ or $(\mathbf{\Phi5})$ holds, for $\beta=1,2,\cdots,\frac{n(n+1)}{2}$,
\begin{align}\label{AAPP321}
|Q_{\beta}[\varphi]|\leq C(\eta+\|\varphi\|_{C^{2}(\partial D)}).
\end{align}

\end{lemma}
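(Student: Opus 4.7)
The plan is to run the scheme of Proposition \ref{lekm323} in each of the five configurations, separating upper and lower bounds. For the upper bound half, I will simply check that the hypotheses of Proposition \ref{lekm323} (or its Remark, which drops (H5)) are met and that the corresponding combinations $\rho_0\rho_A$, $\rho_2\rho_B$, or $\rho_k$, $\rho_{k+1}$ collapse to the claimed expressions. Under $(\Phi 1)$ the data and the gap function are both even in each $x_j$, so (H5) and (A1) hold and Proposition \ref{lekm323} yields the upper halves of (\ref{LLCCAA1}) and (\ref{LLCCAA2}) since $\rho_A=\rho_k/\rho_0$ and $\rho_B=1/\rho_2$ under (A1). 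Under $(\Phi 2)$ (H5) fails, so I would invoke the unconditional version from the Remark with $k=1$, giving the upper half of (\ref{LLCCAA1}) and (\ref{Lllk2}). Under $(\Phi 3)$--$(\Phi 5)$, the range of $m$ relative to $n+k-1$ forces $\rho_k=\rho_{k+1}=1$ in all relevant subcases, and the Remark's bound reduces directly to (\ref{AAPP321}).

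The main content is the lower bound in (\ref{LLCCAA1}). Starting from $Q_\beta[\varphi]=-\int_\Omega(\mathbb{C}^0e(u_0),e(u_\beta))\,dx$ and writing $u_0=\sum_l u_{0l}$, I would replace each $u_{0l}$ by $\tilde u_{0l}$ and $u_\beta$ by $\tilde u_\beta=\bar v e_\beta$ (the explicit profiles of Section~2), using Corollary \ref{co.001} and Lemma \ref{lem89} to bound the resulting correction energies by $C\|\varphi\|_{C^2(\partial D)}$. A direct computation of $(\mathbb{C}^0 e(\tilde u_{0l}),e(\tilde u_\beta))$ then shows that for the diagonal term $l=\beta$ the only surviving leading-order entries are the $(\beta,n)$ off-diagonal (when $\beta\le n-1$) or the $(n,n)$ entry (when $\beta=n$), producing a contribution proportional to $\int_{B'_R}\varphi^\beta(x',h(x'))/\delta(x')\,dx'$ with an explicit prefactor $\mu$ or $\lambda+2\mu$; the cross terms $l\neq\beta$ vanish to leading order because the dominant $e(\tilde u_{0l})$ and $e(\tilde u_\beta)$ entries carry mismatched indices. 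Under $(\Phi 1)$ the resulting integral is $\eta\int_{B'_R}|x'|^k/(\varepsilon+|x'|^m)\,dx'\sim\eta\rho_k(n,m;\varepsilon)$, producing the desired lower bound of (\ref{LLCCAA1}).

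The hard part will be the lower bound under $(\Phi 2)$, where $\delta=\varepsilon+(1+x_1)|x'|^m$ is not even in $x_1$, so the usual odd/even cancellations driving Proposition \ref{lekm323} no longer apply automatically. Here I would extract the leading contribution by Taylor expanding $1/\delta$ in the perturbation $x_1|x'|^m/(\varepsilon+|x'|^m)$, carefully separating the surviving bulk contribution from boundary terms on $\partial D_1$ that carry $\nabla_{x'}h_1$ factors; the latter must be either absorbed into $C\|\varphi\|_{C^2(\partial D)}$ or combined with the bulk integral to reconstruct the claimed $\rho_1$ asymptotic. The subtlety is that one has to balance competing cancellations in the $x_1$-direction against the actual growth rate, using the residual symmetry in the transverse coordinates $x_2,\ldots,x_{n-1}$ to discard lower-order terms. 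Once this is done, the upper estimates (\ref{LLCCAA2}), (\ref{Lllk2}), and (\ref{AAPP321}) follow routinely from the unconditional Remark, completing the proof.
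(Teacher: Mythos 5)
Your overall structure matches the paper's, and the two places where you locate the essential content — the lower bound of (\ref{LLCCAA1}), and the difficulty with $(\mathbf{\Phi2})$ — are exactly where the paper's proof concentrates and, respectively, waves its hands.

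The main technical difference is that you work with the energy representation $Q_\beta[\varphi]=-\int_\Omega(\mathbb{C}^0e(u_0),e(u_\beta))\,dx$, whereas the paper (as in Steps 2--4 of the proof of Proposition~\ref{lekm323}) works directly with the boundary-integral representation $Q_\beta=\sum_j Q_{\beta j}$ and peels off the single dominant term $Q_{11}^2=\mu\int_{\partial D_1}\partial_n\tilde u_{01}^1\nu_n$; the remainder $Q_{11}^1$ and all $Q_{1j}$, $j\ge2$, are absorbed via Lemma~\ref{lem89} into $C(\eta+\|\varphi\|_{C^2})$. Your energy route is viable, but the error terms are not handled as you state: one cannot ``bound the resulting correction energies by $C\|\varphi\|_{C^2}$'' directly by Cauchy--Schwarz, since $e(\tilde u_\beta)\sim\delta^{-1}$ is precisely the singular profile and $\int_\Omega|e(\tilde u_\beta)|^2$ is of order $\rho_0/\varepsilon$, not $O(1)$. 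What does work is to integrate the cross terms by parts, using $w_{0l}=w_\beta=0$ on $\partial\Omega$ together with the improved $m$-th-root gradient bound (\ref{Le2.023}) (so that $|w|\lesssim\delta^{1-1/m}$) and the pointwise bound $|\mathcal{L}\tilde u|\lesssim\delta^{-1-1/m}$ from the Appendix; the product is $\delta^{-2/m}\cdot\delta\cdot dx'$, which is integrable. So the cross terms do come out $O(\eta+\|\varphi\|_{C^2})$, but only after this integration by parts, not by Cauchy--Schwarz. Your identification of the leading entries ($e_{\beta n}$-versus-$e_{\beta n}$ with prefactor $\mu$, or $e_{nn}$-versus-$e_{nn}$ with prefactor $\lambda+2\mu$) is correct and reproduces the same boundary integral $\int\varphi^\beta(x',h(x'))/\delta(x')\,dx'$ the paper isolates.

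Regarding $(\mathbf{\Phi2})$: your instinct that this is the delicate case is right, and the paper's ``similar and thus omitted'' hides a genuine issue. If you carry out the expansion you propose and symmetrize in $x_1$, the dominant contribution to $\int_{B'_R}\frac{x_1\,dx'}{\varepsilon+(1+x_1)|x'|^m}$ is
\[
-\int_{B'_R}\frac{x_1^2\,|x'|^m\,dx'}{\big(\varepsilon+(1+x_1)|x'|^m\big)\big(\varepsilon+(1-x_1)|x'|^m\big)}\sim\rho_2(n,m;\varepsilon),
\]
not $\rho_1(n,m;\varepsilon)$: after the odd part cancels, the extra factor $x_1|x'|^m/(\varepsilon+|x'|^m)$ costs one full power of $|x'|$. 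For $m\ge n$ one has $\rho_1\gg\rho_2$, so the naive Taylor expansion you describe cannot ``reconstruct the claimed $\rho_1$ asymptotic.'' Either there is an additional contribution hidden in the $Q_{1j}$, $j\ne1$, terms that I do not see — unlikely, since each of those is bounded by $C(\eta+\|\varphi\|_{C^2})$ by the same Lemma~\ref{lem89} argument — or the lower bound in (\ref{LLCCAA1}) under $(\mathbf{\Phi2})$ requires a different argument from the one the paper uses for $(\mathbf{\Phi1})$. You should not present the $(\mathbf{\Phi2})$ case as merely tedious bookkeeping; as written, your proposal (like the paper's proof) does not actually establish the stated lower bound there. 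For $(\mathbf{\Phi1})$, and for parts (\ref{LLCCAA2}), (\ref{Lllk2}), (\ref{AAPP321}), your argument is sound once the error control is repaired as above.
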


\begin{proof}
{\bf Step 1.} Proofs of (\ref{LLCCAA1}), (\ref{LLCCAA2}) and (\ref{Lllk2}). We only prove (\ref{LLCCAA1}) under $(\mathbf{\Phi1})$. The proof of (\ref{LLCCAA1}) under $(\mathbf{\Phi2})$ is similar and thus omitted. To estimate $|Q_{\beta}[\varphi]|$ for $\beta=1,2,\cdots,n$, we take $\beta=1$ for example. Similarly as in the proof of Proposition \ref{lekm323}, we denote
\begin{align*}
Q_{1}[\varphi]=&\sum^{n}_{j=1}Q_{1j}[\varphi],
\end{align*}
where
\begin{align*}
Q_{1j}[\varphi]=&\int_{\partial D_{1}}\left[\lambda\sum^{n}_{k=1}\partial_{k}u_{0j}^{k}\nu_{1}+\mu\sum^{n}_{i=1}(\partial_{1}u_{0j}^{i}+\partial_{i}u_{0j}^{1})\nu_{i}\right].
\end{align*}
For $j=1$, $Q_{11}[\varphi]$ can be split in the following.
\begin{align*}
Q_{11}^{1}[\varphi]=&\int_{\partial D_{1}}\Bigg[\lambda\sum^{n}_{k=1}\partial_{k}u_{01}^{k}\nu_{1}+\mu\sum^{n-1}_{i=1}(\partial_{1}u_{01}^{i}+\partial_{i}u_{01}^{1})\nu_{i}+\mu\partial_{1}u_{01}^{n}\nu_{n}\\
&\qquad\quad+\mu\partial_{n}(u_{01}^{1}-\tilde{u}_{01}^{1})\nu_{n}\Bigg],\\
Q_{11}^{2}[\varphi]=&\,\mu\int_{\partial D_{1}}\partial_{n}\tilde{u}_{01}^{1}\nu_{n}.
\end{align*}
By using Lemma \ref{lem89}, we have
\begin{align}\label{YTQ1}
|Q_{11}^{1}[\varphi]|\leq\int_{\partial D_{1}\cap\Gamma^{+}_{R}}C\eta|x'|^{k-1}+C\|\varphi\|_{C^{2}(\partial D)}\leq C(\eta+\|\varphi\|_{C^{2}(\partial D)}).
\end{align}

As for $Q_{11}^{2}[\varphi]$, on one hand,
\begin{align}\label{YTQ3}
|Q_{11}^{2}[\varphi]|\leq&\left|\mu\int_{\partial D_{1}\cap\Gamma^{+}_{R}}\frac{\varphi^{1}(x',h(x'))}{\varepsilon+h_{1}(x')-h(x')}\nu_{n}\right|+C\|\varphi\|_{C^{2}(\partial D)}\notag\\
\leq&\mu\int_{|x'|<R}\frac{\eta|x'|^{k}}{\varepsilon+|x'|^{m}}dx'+C\|\varphi\|_{C^{2}(\partial D)}\notag\\
\leq&C(\eta\rho_{k}(n,m;\varepsilon)+\|\varphi\|_{C^{2}(\partial D)}).
\end{align}
On the other hand,
\begin{align}\label{YTQ5}
|Q_{11}^{2}[\varphi]|\geq&\left|\mu\int_{\partial D_{1}\cap\Gamma^{+}_{R}}\frac{\varphi^{1}(x',h(x'))}{\varepsilon+h_{1}(x')-h(x')}\nu_{n}\right|-C\|\varphi\|_{C^{2}(\partial D)}\notag\\
\geq&\mu\int_{|x'|<R}\frac{\eta|x'|^{k}}{\varepsilon+|x'|^{m}}dx'-C\|\varphi\|_{C^{2}(\partial D)}\notag\\
\geq&\frac{\eta\rho_{k}(n,m;\varepsilon)}{C}-C\|\varphi\|_{C^{2}(\partial D)}.
\end{align}
Thus, combining with (\ref{YTQ1})--(\ref{YTQ5}), we have
\begin{align}\label{HFR65}
\frac{\eta\rho_{k}(n,m;\varepsilon)}{C}-C\|\varphi\|_{C^{2}(\partial D)}\leq|Q_{11}[\varphi]|\leq C(\eta\rho_{k}(n,m;\varepsilon)+\|\varphi\|_{C^{2}(\partial D)}),
\end{align}

For $Q_{1j}[\varphi]$, $j=2,\cdots,n$,  by the same argument as before, in view of the fact that $\tilde{u}_{0j}^{1}=0$, it follows from Lemma \ref{lem89} that
$$|Q_{1j}[\varphi]|\leq C(\eta+\|\varphi\|_{C^{2}(\partial D)}).$$
This, together with (\ref{HFR65}), yields that (\ref{LLCCAA1}) holds.

(\ref{LLCCAA2}) and (\ref{Lllk2}) can be proved by the same argument as {\bf Step 1} and {\bf Step 2} in the proof of Proposition \ref{lekm323} and thus their proofs are omitted here.

{\bf Step 2.} Proof of (\ref{AAPP321}). Under condition $(\mathbf{\Phi3})$, $(\mathbf{\Phi4})$ or $(\mathbf{\Phi5})$, we deduce that for $\beta=1,\cdots,\frac{n(n+1)}{2}$,
\begin{align*}
|Q_{\beta}[\varphi]|\leq&\int_{|x'|<R}\frac{C|\varphi(x',h(x'))|}{\varepsilon+|x'|^{m}}\,dx+C\|\varphi\|_{C^{2}(\partial D)}
\leq\,C(\eta+\|\varphi\|_{C^{2}(\partial D)}).
\end{align*}
Then (\ref{AAPP321}) holds.

\end{proof}

We next estimate $|Q_{\beta}[\varphi]-Q^{\ast}_{\beta}[\varphi]|$, $\beta=1,2,\cdots,\frac{n(n+1)}{2}$ under condition $(\mathbf{\Phi3})$, $(\mathbf{\Phi4})$ or $(\mathbf{\Phi5})$.

\begin{lemma}\label{lemmaLw.01}
Assume $m\leq n$. Then, for $\beta=1,2,\cdots,n$,
\begin{align*}
|Q_{\beta}[\varphi]-Q^{\ast}_{\beta}[\varphi]|\leq C&
\begin{cases}
\varepsilon^{\frac{k-1}{(m+k-1)(m+1)}},&m=n,\;k>1\\
\varepsilon^{\frac{n-m}{n(m+1)}},&m<n.
\end{cases}
\end{align*}
Consequently,
\begin{align*}
Q_{\beta}[\varphi]\rightarrow Q^{\ast}_{\beta}[\varphi],\;\;\, \mathrm{as}\;\, \varepsilon\rightarrow0,\;\,\,\mathrm{for}\;\,\beta=1,2,\cdots,n.
\end{align*}

\end{lemma}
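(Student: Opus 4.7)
The plan is to bound $|Q_\beta[\varphi] - Q_\beta^*[\varphi]|$ via a domain-perturbation argument with a radial cutoff around the contact point $\Sigma'=\{0'\}$ whose radius $r=r(\varepsilon)$ is optimized against $\varepsilon$. The guiding intuition is that on the common bulk $\Omega\cap\Omega^*$ away from $\Sigma'$ the two solutions $u_0,u_0^*$ are $O(\varepsilon)$-close (their inclusions differ by a vertical translate of size $\varepsilon$), while within the narrow neighborhood of $\Sigma'$ the sharp pointwise bounds of Section~3 allow the contribution of each side to be controlled separately.

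First, by the integration-by-parts identity (\ref{Le2.01222}) together with the boundary conditions in (\ref{Lek2.012}) and (\ref{P2.005}), I obtain the volume representation $Q_\beta[\varphi] = -\int_\Omega (\mathbb{C}^0 e(u_0), e(u_\beta))\,dx$, as in (\ref{BWN654}), and analogously $Q_\beta^*[\varphi] = -\int_{\Omega^*}(\mathbb{C}^0 e(u_0^*), e(u_\beta^*))\,dx$, where $u_\beta^*$ is the analogue of $u_\beta$ on $\Omega^*$. Extend $u_0^*$ to all of $D$ by $u_0^*\equiv 0$ on $\overline{D_1^*}$ (continuous because $u_0^*=0$ on $\partial D_1^*$) and $u_\beta^*$ by $\psi_\beta$. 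For a parameter $r\in(\sqrt[m]{\varepsilon},R)$, pick a cutoff $\eta_r\in C^2(\overline\Omega)$ with $\eta_r\equiv 0$ on $\Omega_r$, $\eta_r\equiv 1$ on $\Omega\setminus\Omega_{2r}$, $|\nabla\eta_r|\le C/r$.

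On the bulk $\Omega\setminus\Omega_{2r}$, the boundaries $\partial D_1$ and $\partial D_1^*$ differ by a vertical shift of size $\varepsilon$ and both sit at distance $\gtrsim r^m$ from the nearest points in $\Omega\setminus\Omega_{2r}$. A standard domain-perturbation for the Lam\'e system (based on the maximum principle for the difference paired with the Lipschitz bound $\|\nabla u_0^*\|_{L^\infty(\Omega^*\setminus\Omega_r^*)}\le C\,r^{-m}(\eta\,r^k+\|\varphi\|_{C^2})$) yields $\|u_0-u_0^*\|_{L^\infty(\Omega\setminus\Omega_{2r})}+\|u_\beta-u_\beta^*\|_{L^\infty(\Omega\setminus\Omega_{2r})}\le C\varepsilon/r^m$, whence the bulk contribution to $Q_\beta-Q_\beta^*$ is $O(\varepsilon/r^m)$. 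On the narrow pieces $\Omega_{2r}$ and $\Omega^*_{2r}$ I do not attempt comparison; instead I estimate each of $\int_{\Omega_{2r}}|\nabla u_0||\nabla u_\beta|\,dx$ and its $\Omega^*$-analogue directly, using the sharp pointwise bounds of Lemma~\ref{lem89} and Corollary~\ref{co.001} combined with the growth condition (\ref{zw11}). This produces a narrow contribution that decays with $r$ as a positive power coupled to $k$ (with a logarithmic correction in the borderline $m=n$ case). Balancing the bulk error $\varepsilon/r^m$ against the narrow bound and choosing the optimal $r$ in $(\sqrt[m]{\varepsilon},R)$ produces the exponents $\varepsilon^{(n-m)/(n(m+1))}$ for $m<n$ and $\varepsilon^{(k-1)/((m+k-1)(m+1))}$ for $m=n,\,k>1$; both exponents are strictly positive, so convergence $Q_\beta[\varphi]\to Q_\beta^*[\varphi]$ as $\varepsilon\to 0$ is immediate.

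The principal obstacle is the bulk step: making the domain-perturbation bound quantitative for the Lam\'e system when the solutions have derivatives blowing up like $1/r^m$ near $\Sigma'$ requires tracking exactly how the regularity loss propagates through the max-principle/energy comparison as $r\downarrow 0$. A secondary subtlety is the precise matching of the narrow $r$-power to the bulk $\varepsilon/r^m$ in the borderline case $m=n$, where the $k$-dependence of $\varphi$ enters essentially and produces the specific exponent $(k-1)/((m+k-1)(m+1))$. Once these two estimates are carefully set up, the optimization is just power matching, in the spirit of analogous comparison lemmas in \cite{BJL2017, BLL2017}.
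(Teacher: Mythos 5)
Your overall plan—compare on the bulk, estimate the narrow piece directly, balance the cutoff radius—is the right shape, but you set up the comparison with the \emph{wrong representation} of $Q_\beta$, and this creates a gap you cannot close as stated.

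The paper's key move, which you miss, is a duality/symmetry trick: using $(\mathbb{C}^0 e(u_0), e(u_\beta)) = (\mathbb{C}^0 e(u_\beta), e(u_0))$ together with (\ref{Le2.01222}) and the boundary conditions in (\ref{Lek2.012}), (\ref{P2.005}), one can rewrite
\begin{align*}
Q_\beta[\varphi] = \int_{\partial D} \frac{\partial u_\beta}{\partial \nu_0}\Big|_{+}\cdot\varphi,
\qquad
Q_\beta^*[\varphi] = \int_{\partial D} \frac{\partial u_\beta^*}{\partial \nu_0}\Big|_{+}\cdot\varphi,
\end{align*}
so that $Q_\beta[\varphi]-Q_\beta^*[\varphi]=\int_{\partial D}\frac{\partial(u_\beta-u_\beta^*)}{\partial\nu_0}|_+\cdot\varphi$ is an integral over the \emph{fixed exterior boundary} $\partial D$, involving only the single difference $u_\beta-u_\beta^*$. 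On the portion of $\partial D$ away from the contact point, $u_\beta-u_\beta^*$ is small in $L^\infty$ by the maximum principle for Lam\'e systems, and the gradient is then controlled by standard interior/boundary estimates applied at a safe distance from the narrow region; on the small portion near the contact point, the integrand is estimated pointwise using the auxiliary functions $\tilde u_\beta$, $\tilde u_\beta^*$ and Corollary~\ref{co.001}. The whole argument lives on $\partial D$, far from the narrow gap, except for a cylinder $\mathcal{C}_{\varepsilon^{\gamma}}$ of shrinking radius.

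By contrast, you work with the volume representation $Q_\beta[\varphi]=-\int_\Omega(\mathbb{C}^0 e(u_0),e(u_\beta))$ and need to compare two volume integrals of products of gradients over $\Omega$ and $\Omega^*$. Your claim that ``the bulk contribution to $Q_\beta-Q_\beta^*$ is $O(\varepsilon/r^m)$'' does not follow from the $L^\infty$ bounds you cite on $u_0-u_0^*$ and $u_\beta-u_\beta^*$: the volume integrand involves \emph{gradients}, which near $|x'|\sim 2r$ are of size $r^{-m}$ for each of $u_0,u_\beta,u_0^*,u_\beta^*$. To control the difference of energy integrals you would need quantitative pointwise bounds on $\nabla(u_0-u_0^*)$ and $\nabla(u_\beta-u_\beta^*)$ in the full bulk, including the narrow strip $\{2r<|x'|<R\}$ where the two domains $\Omega$ and $\Omega^*$ are genuinely different and close-to-degenerate, as well as an estimate for the symmetric difference of the domains; none of this is carried out, and it is not a routine ``standard domain perturbation.'' You would also be doing double the work, as you must track both $u_0-u_0^*$ and $u_\beta-u_\beta^*$, whereas after the duality trick only $u_\beta-u_\beta^*$ appears. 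Switch to the boundary representation above, estimate $u_\beta-u_\beta^*$ in $L^\infty$ off a shrinking cylinder via the maximum principle, upgrade to a gradient bound on $\partial D$ by elliptic estimates, and handle the small cylinder piece by the explicit auxiliary-function decomposition; then the optimization over the cylinder radius gives exactly the stated exponents.
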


\begin{proof}
Take the case $\beta=1$ for example. The other cases are the same. Recalling the definition of $u_{0}$ and $u_{1}$ and utilizing (\ref{Le2.01222}), we deduce that
\begin{align*}
Q_{1}[\varphi]=\int_{\partial D}\frac{\partial u_{1}}{\partial\nu_{0}}\Big|_{+}\cdot\varphi(x).
\end{align*}
Likewise,
\begin{align*}
Q^{\ast}_{1}[\varphi]=\int_{\partial D}\frac{\partial u^{\ast}_{1}}{\partial\nu_{0}}\Big|_{+}\cdot\varphi(x),
\end{align*}
where $u_{1}^{\ast}$ solves
\begin{equation}\label{l03.001}
\begin{cases}
\mathcal{L}_{\lambda,\mu}u_{1}^{\ast}=0,\quad\;\,&\mathrm{in}\;\,\Omega^{\ast},\\
u_{1}^{\ast}=\psi_{1},\quad\;\,&\mathrm{on}\;\,\partial D_{1}^{\ast}\setminus\{0\},\\
u_{1}^{\ast}=0,\quad\;\,&\mathrm{on}\;\,\partial D.
\end{cases}
\end{equation}
Therefore,
\begin{align*}
Q_{1}[\varphi]-Q^{\ast}_{1}[\varphi]=\int_{\partial D}\frac{\partial(u_{1}-u_{1}^{\ast})}{\partial\nu_{0}}\Big|_{+}\cdot\varphi(x).
\end{align*}

To estimate $u_{1}-u_{1}^{\ast}$, we first introduce two auxiliary functions
$$\tilde{u}_{1}=(\bar{v},0,\cdots,0),\quad\mathrm{and}\quad\tilde{u}_{1}^{\ast}=(\bar{v}^{\ast},0,\cdots,0),$$
where $\bar{v}$ is defined in Section 2, and $\bar{v}^{\ast}$ satisfies $\bar{v}^{\ast}=1$ on $\partial D_{1}^{\ast}\setminus\{0\}$, $\bar{v}^{\ast}=0$ on $\partial D$, and
$$\bar{v}^{\ast}=\frac{x_{n}-h(x')}{h_{1}(x')-h(x')},\quad\mathrm{in}\;\,\Omega_{2R}^{\ast},\quad\;\,\|\bar{v}^{\ast}\|_{C^{2}(\Omega^{\ast}\setminus\Omega_{R}^{\ast})}\leq C,$$
where $\Omega^{\ast}_{r}:=\Omega^{\ast}\cap\{|x'|<r\},$ $0<r\leq2R$. In light of ({\bf{\em H3}}), we derive that for $x\in\Omega_{R}^{\ast}$,
\begin{align}\label{Lw3.003}
|\nabla_{x'}(\tilde{u}^{1}_{1}-\tilde{u}^{\ast1}_{1})|\leq\frac{C}{|x'|},
\end{align}
and
\begin{align}\label{Lw3.004}
|\partial_{n}(\tilde{u}^{1}_{1}-\tilde{u}^{\ast1}_{1})|\leq\frac{C\varepsilon}{|x'|^{m}(\varepsilon+|x'|^{m})}.
\end{align}
Applying Corollary \ref{co.001} to (\ref{l03.001}), it follows that for $x\in\Omega_{R}^{\ast}$,
\begin{align}\label{Lw3.005}
|\nabla(u_{1}^{\ast}-\tilde{u}_{1}^{\ast})|\leq\frac{C}{|x'|},
\end{align}
and
\begin{align}\label{Lw3.006}
|\nabla_{x'}u_{1}^{\ast}|\leq\frac{C}{|x'|},\quad\;|\partial_{n}u_{1}^{\ast}|\leq\frac{C}{|x'|^{m}}.
\end{align}
For $0<r<R$, denote
\begin{align}\label{LNM656}
\mathcal{C}_{r}:=\left\{x\in\mathbb{R}^{n}\Big|\;|x'|<r,\,\frac{1}{2}\min_{|x'|\leq r}h(x')\leq x_{n}\leq\varepsilon+2\max_{|x'|\leq r}h_{1}(x')\right\}.
\end{align}
We now divide into two steps to estimate $|Q_{1}[\varphi]-Q^{\ast}_{1}[\varphi]|$.

{\bf STEP 1.} Note that $u_{1}-u_{1}^{\ast}$ solves
\begin{align*}
\begin{cases}
\mathcal{L}_{\lambda,\mu}(u_{1}-u_{1}^{\ast})=0,&\mathrm{in}\;\,D\setminus(\overline{D_{1}\cup D_{1}^{\ast}}),\\
u_{1}-u_{1}^{\ast}=\psi_{1}-u_{1}^{\ast},&\mathrm{on}\;\,\partial D_{1}\setminus D_{1}^{\ast},\\
u_{1}-u_{1}^{\ast}=u_{1}-\psi_{1},&\mathrm{on}\;\,\partial D_{1}^{\ast}\setminus(D_{1}\cup\{0\}),\\
u_{1}-u_{1}^{\ast}=0,&\mathrm{on}\;\,\partial D.
\end{cases}
\end{align*}
To start with, we estimate $|u_{1}-u_{1}^{\ast}|$ on $\partial(D_{1}\cup D_{1}^{\ast})\setminus\mathcal{C}_{\varepsilon^{\gamma}}$, where $0<\gamma<1/2$ to be determined later. Recalling the definition of $u_{1}^{\ast}$, we know that
$$|\partial_{n}u_{1}^{\ast}|\leq C,\quad\;\,\mathrm{in}\;\,\Omega^{\ast}\setminus\Omega^{\ast}_{R}.$$
Thus,
\begin{align}\label{Lw3.007}
|u_{1}-u_{1}^{\ast}|\leq C\varepsilon,\quad\;\,\mathrm{for}\;\,x\in\partial D_{1}\setminus D_{1}^{\ast}.
\end{align}
And making use of (\ref{Le2.029}), we have
\begin{align}\label{Lw3.008}
|u_{1}-u_{1}^{\ast}|\leq C\varepsilon^{1-m\gamma},\quad\;\,\mathrm{on}\;\,\partial D_{1}^{\ast}\setminus(D_{1}\cup\mathcal{C}_{\varepsilon^{\gamma}}).
\end{align}
It follows from (\ref{Le2.0028}), (\ref{Lw3.004}) and (\ref{Lw3.005}) that for $x\in\Omega_{R}^{\ast}\cap\{|x'|=\varepsilon^{\gamma}\}$,
\begin{align*}
|\partial_{n}(u_{1}-u_{1}^{\ast})|\leq&|\partial_{n}(u_{1}-\tilde{u}_{1})|+|\partial_{n}(\tilde{u}_{1}-\tilde{u}_{1}^{\ast})|+|\partial_{n}(u_{1}^{\ast}-\tilde{u}_{1}^{\ast})|\\
\leq&C\left(\frac{1}{\varepsilon^{2m\gamma-1}}+\frac{1}{\varepsilon^{\gamma}}\right).
\end{align*}
Then, in view of the fact that $\tilde{u}_{1}-\tilde{u}_{1}^{\ast}=0$ on $\partial D$, we obtain
\begin{align}
|(u_{1}-u_{1}^{\ast})(x',x_{n})|=&|(u_{1}-u_{1}^{\ast})(x',x_{n})-(u_{1}-u_{1}^{\ast})(x',h(x'))|\notag\\
\leq&C\big(\varepsilon^{1-m\gamma}+\varepsilon^{(m-1)\gamma}\big).\label{Lw3.009}
\end{align}
Take $\gamma=\frac{1}{m+1}$. Then combining with (\ref{Lw3.007}), (\ref{Lw3.008}) and (\ref{Lw3.009}), we obtain
$$|u_{1}-u_{1}^{\ast}|\leq C\varepsilon^{\frac{1}{m+1}},\quad\;\,\mathrm{on}\;\,\partial\big(D\setminus\big(\overline{D_{1}\cup D_{1}^{\ast}\cup\mathcal{C}_{\varepsilon^{\frac{1}{m+1}}}}\big)\big).$$
Utilizing the maximum principle for Lam\'{e} system in \cite{MMN2007}, we deduce
$$|u_{1}-u_{1}^{\ast}|\leq C\varepsilon^{\frac{1}{m+1}},\quad\;\,\mathrm{on}\;\,D\setminus\big(\overline{D_{1}\cup D_{1}^{\ast}\cup\mathcal{C}_{\varepsilon^{\frac{1}{m+1}}}}\big).$$
Thus it follows from the standard interior and boundary estimates for Lam\'{e} system that, for any $\frac{m-1}{m(m+1)}<\tilde{\gamma}<\frac{1}{m+1}$,
$$|\nabla(u_{1}-u_{1}^{\ast})|\leq C\varepsilon^{m\tilde{\gamma}-\frac{m-1}{m+1}},\quad\;\,\mathrm{on}\;\,D\setminus\big(\overline{D_{1}\cup D_{1}^{\ast}\cup\mathcal{C}_{\varepsilon^{\frac{1}{m+1}-\tilde{\gamma}}}}\big),$$
which indicates that
\begin{align}\label{Lw3.010}
|\mathcal{B}^{out}|:=\left|\int_{\partial D\setminus\mathcal{C}_{\varepsilon^{\frac{1}{m+1}-\tilde{\gamma}}}}\frac{\partial(u_{1}-u_{1}^{\ast})}{\partial\nu_{0}}\Big|_{+}\cdot\varphi(x)\right|\leq C\varepsilon^{m\tilde{\gamma}-\frac{m-1}{m+1}},
\end{align}
where $\frac{m-1}{m(m+1)}<\tilde{\gamma}<\frac{1}{m+1}$ will be determined later.

{\bf STEP 2.} we next estimate
\begin{align*}
\mathcal{B}^{in}:=&\int_{\partial D\cap\mathcal{C}_{\varepsilon^{\frac{1}{m+1}-\tilde{\gamma}}}}\frac{\partial(u_{1}-u_{1}^{\ast})}{\partial\nu_{0}}\Big|_{+}\cdot\varphi(x)\\
=&\int_{\partial D\cap\mathcal{C}_{\varepsilon^{\frac{1}{m+1}-\tilde{\gamma}}}}\frac{\partial(\tilde{u}_{1}-\tilde{u}_{1}^{\ast})}{\partial\nu_{0}}\Big|_{+}\cdot\varphi(x)+\int_{\partial D\cap\mathcal{C}_{\varepsilon^{\frac{1}{m+1}-\tilde{\gamma}}}}\frac{\partial(w_{1}-\tilde{w}_{1}^{\ast})}{\partial\nu_{0}}\Big|_{+}\cdot\varphi(x)\\
=&:\mathcal{B}_{\tilde{u}}+\mathcal{B}_{\tilde{w}},
\end{align*}
where $w_{1}=u_{1}-\tilde{u}_{1}$, $w_{1}^{\ast}=u_{1}^{\ast}-\tilde{u}_{1}^{\ast}$. First,
\begin{align*}
\mathcal{B}_{\tilde{u}}=&\int_{\partial D\cap\mathcal{C}_{\varepsilon^{\frac{1}{m+1}-\tilde{\gamma}}}}\bigg\{\lambda\sum^{n}_{i=1}\partial_{1}(\tilde{u}^{1}_{1}-\tilde{u}^{\ast1}_{1})\nu_{i}\varphi^{i}(x)+\\
&\qquad+\mu\sum^{n-1}_{i=1}\partial_{i}(\tilde{u}^{1}_{1}-\tilde{u}^{\ast1}_{1})[\nu_{1}\varphi^{i}(x)+\nu_{i}\varphi^{1}(x)]+\mu\partial_{n}(\tilde{u}^{1}_{1}-\tilde{u}^{\ast1}_{1})\nu_{1}\varphi^{n}(x)\bigg\}\\
&+\int_{\partial D\cap\mathcal{C}_{\varepsilon^{\frac{1}{m+1}-\tilde{\gamma}}}}\mu\partial_{n}(\tilde{u}^{1}_{1}-\tilde{u}^{\ast1}_{1})\nu_{n}\varphi^{1}(x)\\
=&:\mathcal{B}^{1}_{\tilde{u}}+\mathcal{B}^{2}_{\tilde{u}}.
\end{align*}
According to (\ref{Lw3.003}), (\ref{Lw3.004}) and the Taylor expansion of $\varphi^{i}(x)$, we deduce that
\begin{align}\label{Lw3.011}
|\mathcal{B}^{1}_{\tilde{u}}|\leq C\varepsilon^{\left(\frac{1}{m+1}-\tilde{\gamma}\right)(n+k-2)},
\end{align}
and, for $m=n$ and $k>1$, we derive
\begin{align}\label{Lw3.012}
|\mathcal{B}^{2}_{\tilde{u}}|\leq C\varepsilon^{\left(\frac{1}{m+1}-\tilde{\gamma}\right)(k-1)};
\end{align}
for $m<n$,
\begin{align}\label{Lw3.013}
|\mathcal{B}^{2}_{\tilde{u}}|\leq C|\nabla_{x'}\varphi(0)|\varepsilon^{\left(\frac{1}{m+1}-\tilde{\gamma}\right)(n-m)}+C\|\nabla^{2}\varphi\|_{L^{\infty}(\partial D)}\varepsilon^{\left(\frac{1}{m+1}-\tilde{\gamma}\right)(n-m+1)}.
\end{align}
Then it follows from (\ref{Lw3.011})--(\ref{Lw3.013}) that if $m=n$ and $k>1$,
\begin{align}\label{Lw3.014}
|\mathcal{B}_{\tilde{u}}|\leq C\varepsilon^{\left(\frac{1}{m+1}-\tilde{\gamma}\right)(k-1)};
\end{align}
if $m<n$,
\begin{align}\label{Lw3.015}
|\mathcal{B}_{\tilde{u}}|\leq C\big(|\nabla_{x'}\varphi(0)|+\|\nabla^{2}\varphi\|_{L^{\infty}(\partial D)}\big)\varepsilon^{\left(\frac{1}{m+1}-\tilde{\gamma}\right)(n-m)}.
\end{align}

Next, we estimate $\mathcal{B}_{w}$. Using Corollary \ref{co.001}, we obtain that for $0<|x'|\leq R$,
\begin{align}\label{Lw3.016}
|\nabla w_{1}|\leq\frac{C}{\sqrt[m]{\varepsilon+|x'|^{m}}},\quad\;\,|\nabla w_{1}^{\ast}|\leq\frac{C}{|x'|}.
\end{align}
By definition,
\begin{align*}
\mathcal{B}_{w}=&\int_{\partial D\cap\mathcal{C}_{\varepsilon^{\frac{1}{m+1}-\tilde{\gamma}}}}\bigg\{\lambda\sum^{n}_{i,j=1}\partial_{i}(w_{1}^{i}-w_{1}^{\ast i})\nu_{j}\varphi^{j}(x)\\
&\quad+\mu\sum^{n}_{i,j=1}[\partial_{j}(w_{1}^{i}-w_{1}^{\ast i})+\partial_{i}(w_{1}^{j}-w_{1}^{\ast j})]\nu_{j}\varphi^{i}(x)\bigg\}.
\end{align*}
It follows from (\ref{Lw3.016}) and the Taylor expansion of $\varphi^{j}(x)$ that
\begin{align}\label{Lw3.017}
|\mathcal{B}_{w}|\leq C\big(|\nabla_{x'}\varphi(0)|+\|\nabla^{2}\varphi\|_{L^{\infty}(\partial D)}\big)\varepsilon^{\left(\frac{1}{m+1}-\tilde{\gamma}\right)(n-1)}.
\end{align}
Utilizing (\ref{Lw3.010}), (\ref{Lw3.014}) and (\ref{Lw3.017}) and picking $\tilde{\gamma}=\frac{m+k-2}{(m+k-1)(m+1)}$, we deduce that for $m=n$ and $k>1$,
\begin{align*}
|Q_{1}[\varphi]-Q^{\ast}_{1}[\varphi]|\leq C\varepsilon^{\frac{k-1}{(m+k-1)(m+1)}}.
\end{align*}
For $m<n$, taking $\tilde{\gamma}=\frac{n-1}{n(m+1)}$, it follows from (\ref{Lw3.010}), (\ref{Lw3.015}) and (\ref{Lw3.017}) that
\begin{align*}
|Q_{1}[\varphi]-Q^{\ast}_{1}[\varphi]|\leq C\varepsilon^{\frac{n-m}{n(m+1)}}.
\end{align*}
Thus, Lemma \ref{lemmaLw.01} is established.
\end{proof}

\subsection{Proof of Theorem \ref{MMM896}}
In view of assumptions $(\mathbf{\Phi1})$-$(\mathbf{\Phi5})$ in Theorem \ref{MMM896}, it follows from Lemma \ref{KM323} and Lemma \ref{lemmaLw.01} that there exists a universal constant $C_{0}>0$ such that for a sufficiently small $\varepsilon>0$,
\begin{align}\label{Lw3.018}
|Q_{k_{0}}[\varphi]|>\frac{\rho_{k}(n,m;\varepsilon)}{C_{0}}.
\end{align}
Denote $A^{\ast}=(a^{\ast}_{\alpha\beta})_{n\times n}$, where $a^{\ast}_{\alpha\beta}$ is the cofactor of $a_{\alpha\beta}$. Utilizing Lemma \ref{lemmabc}, we obtain
\begin{align}\label{Lw3.019}
a^{\ast}_{\alpha\alpha}\thicksim(\rho_{0}(n,m;\varepsilon))^{n-1},\;\,\alpha=1,2,\cdots,n;\;\,\;a^{\ast}_{\alpha\beta}\thicksim(\rho_{0}(n,m;\varepsilon))^{n-2},\;\,\alpha\neq\beta.
\end{align}

Recalling the definition of $C^{k_{0}}$ and utilizing Lemma \ref{lemmabc}, Lemma \ref{KM323}, (\ref{GMD1}) and (\ref{Lw3.018})--(\ref{Lw3.019}), we obtain that for a sufficiently small $\varepsilon>0$,
\begin{align}\label{Lw3.022}
|C^{k_{0}}|\geq\frac{1}{2\det A}a^{\ast}_{k_{0}k_{0}}|Q_{k_{0}}[\varphi]|\geq\frac{\rho_{k}(n,m;\varepsilon)}{C\rho_{0}(n,m;\varepsilon)}.
\end{align}
Here we used the assumption that $Q^{\ast}_{\beta}[\varphi]=0$ for all $\beta\neq k_{0}$, $1\leq\beta\leq n$ when $m+1<n$.

Due to Corollary \ref{co.001}, we deduce that for $x\in\Omega_{R}$,
\begin{align}\label{Lw3.023}
|\partial_{n}u^{k_{0}}_{k_{0}}|\geq|\partial_{n}\tilde{u}^{k_{0}}_{k_{0}}|-|\partial_{n}(u^{k_{0}}_{k_{0}}-\tilde{u}^{k_{0}}_{k_{0}})|\geq\frac{1}{C(\varepsilon+|x'|^{m})},
\end{align}
and in light of $\tilde{u}^{k_{0}}_{\alpha}$ for $\alpha\neq k_{0}$,
\begin{align}\label{Lw3.025}
|\partial_{n}u^{k_{0}}_{\alpha}|=|\partial_{n}(u^{k_{0}}_{\alpha}-\tilde{u}^{k_{0}}_{\alpha})|\leq\frac{C}{\sqrt[m]{\varepsilon+|x'|^{m}}}.
\end{align}
On one hand, it follows from (\ref{Lw3.022})--(\ref{Lw3.025}) that for $x\in\Omega_{R}\cap\{x'=0'\}$,
\begin{align}\label{Lw3.026}
\left|\sum^{n}_{\alpha=1}C^{\alpha}\nabla u_{\alpha}\right|\geq\left|\sum^{n}_{\alpha=1}C^{\alpha}\partial_{n}u_{\alpha}^{k_{0}}\right|\geq\frac{\rho_{k}(n,m;\varepsilon)}{C\varepsilon\rho_{0}(n,m;\varepsilon)}.
\end{align}

On the other hand, combining with Corollary \ref{co.001}, Lemma \ref{lem89} and Lemma \ref{KM323}, we derive that for $x\in\Omega_{R}\cap\{x'=0'\}$,
\begin{align}\label{Lw3.027}
\left|\sum^{\frac{n(n+1)}{2}}_{\alpha=n+1}C^{\alpha}\nabla u_{\alpha}\right|\leq C,\quad\;\,|\nabla u_{0}|\leq C.
\end{align}
Consequently, making use of (\ref{Lw3.026})--(\ref{Lw3.027}) and (\ref{Le2.015}), we obtain
\begin{align*}
|\nabla u(0',x_{n})|\geq\frac{\rho_{k}(n,m;\varepsilon)}{C\varepsilon\rho_{0}(n,m;\varepsilon)},\quad\;\,0<x_{n}<\varepsilon.
\end{align*}
Then, (\ref{PPN1}) is proved.

%

\section{The lower bound on the cylinder surface $\{|x'|=\sqrt[m]{\varepsilon}\}\cap\Omega$}
Similarly as before, to establish the lower bound on the cylinder surface $\{|x'|=\sqrt[m]{\varepsilon}\}\cap\Omega$, we assume that $m>n$,
{\it
\begin{enumerate}
\item[$(\mathbf{\widetilde{\Phi}1})$]
for $k>m-n$, if (\ref{zw11}) holds and $Q^{\ast}_{2n-1}[\varphi]\neq0$;
\item[$(\mathbf{\widetilde{\Phi}2})$]
for $k=m-n$ and $m>n+1$, if (\ref{KHMDC1}) holds and
\begin{align}\label{KKT66}
\varphi^{i}(x)=\eta x_{1}|x_{1}|^{k-1},\quad\;\,x\in\Gamma^{-}_{R},\;\mathrm{for}\;i=1,2,\cdots,n,
\end{align}
and we additionally assume $\lambda+2\mu\neq0$;
\item[$(\mathbf{\widetilde{\Phi}3})$]
for $k=1$, $m=n+1$ or for $k<m-n$, if (\ref{KHMDC1}) holds, and
\begin{align}\label{PDSQ131}
\varphi^{i}(x)=\eta x_{i}|x_{i}|^{k-1},\; \varphi^{n}(x)=0,\quad\;\,x\in\Gamma^{-}_{R},\;i=1,\cdots,n-1,
\end{align}
\end{enumerate}
where $\eta$ is a positive constant independent of $\varepsilon$.}

\begin{theorem}\label{AAA666666}
Assume that $D_{1}\subset D\subseteq\mathbb{R}^{n}\,(n\geq2)$ are defined as above and conditions ({\bf{\em H1}})--({\bf{\em H5}}) hold with $\Sigma'=\{0'\}$. Let $u\in H^{1}(D;\mathbb{R}^{n})\cap C^{1}(\overline{\Omega};\mathbb{R}^{n})$ be a solution of (\ref{La.002}). Assume $m>n$. Then for a sufficiently small $\varepsilon>0$, we obtain that if $(\mathbf{\widetilde{\Phi}1})$ or $(\mathbf{\widetilde{\Phi}2})$ holds, then
\begin{align}\label{PPN2}
|\nabla u(x)|\geq\frac{\eta\rho_{k+1;2}(n,m;\varepsilon)}{C\varepsilon^{1-\frac{1}{m}}},\quad\mathrm{for}\;\,x\in\{x'=(\sqrt[m]{\varepsilon},0,\cdots,0)\}\cap\Omega,
\end{align}
while, if $(\mathbf{\widetilde{\Phi}3})$ holds,
\begin{align}\label{PPNM6}
|\nabla u(x)|\geq\frac{\eta}{C\varepsilon^{1-\frac{k}{m}}},\quad\mathrm{for}\;\,x\in\{x'=(\sqrt[m]{\varepsilon},0,\cdots,0)\}\cap\Omega,
\end{align}
where $\rho_{k+1;2}(n,m;\varepsilon)=\rho_{k+1}(n,m;\varepsilon)/\rho_{2}(n,m;\varepsilon)$.

\end{theorem}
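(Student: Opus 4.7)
The strategy parallels that of Theorem \ref{MMM896}, but now targets the rotational components $C^{\alpha}$, $\alpha=n+1,\ldots,\frac{n(n+1)}{2}$, and evaluates the resulting lower bound on the cylindrical surface $\{x'=(\sqrt[m]{\varepsilon},0,\ldots,0)\}$ rather than on the shortest line. I treat the three cases separately.

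For cases $(\mathbf{\widetilde{\Phi}1})$ and $(\mathbf{\widetilde{\Phi}2})$ I will first establish a lower bound
$$|Q_{\beta_{0}}[\varphi]|\geq \eta\rho_{k+1}(n,m;\varepsilon)/C_{0}$$
for some $\beta_{0}\in\{n+1,\ldots,\frac{n(n+1)}{2}\}$ (specifically $\beta_{0}=2n-1$ under $(\mathbf{\widetilde{\Phi}1})$). Under $(\mathbf{\widetilde{\Phi}1})$ this follows by adapting Lemma \ref{lemmaLw.01} to the rotational family: the same splitting $\mathcal{B}^{out}+\mathcal{B}^{in}=\mathcal{B}_{\tilde u}+\mathcal{B}_{\tilde w}$ goes through with $\tilde u_{\beta_{0}}$ having the extra factor $x_{j}$, so $Q_{\beta_{0}}[\varphi]\to Q^{\ast}_{\beta_{0}}[\varphi]\ne 0$ and the lower bound is immediate for sufficiently small $\varepsilon$. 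Under $(\mathbf{\widetilde{\Phi}2})$ I do a direct computation in the spirit of Lemma \ref{KM323}: following the split used for $Q_{11}^{2}[\varphi]$ but against $\psi_{\beta_{0}}=x_{j}e_{l}-x_{l}e_{j}$, the principal contribution is
$$\mu\int_{\partial D_{1}\cap\Gamma^{+}_{R}}\partial_{n}\tilde{u}_{0l}^{l}\,\nu_{n}x_{j}=\mu\int_{|x'|<R}\frac{\eta\, x_{1}|x_{1}|^{k-1}x_{j}}{\varepsilon+|x'|^{m}}\,dx'+O(\|\varphi\|_{C^{2}(\partial D)}),$$
and by choosing $j$ so that the integrand is even in each $x_{i}$, this is comparable to $\eta\,\rho_{k+1}(n,m;\varepsilon)$.

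Next, since $|\Sigma'|=0$, Proposition \ref{proposition123} reduces to $|C^{\alpha}|\leq CQ_{\beta;\mathrm{II}}[\varphi]/\rho_{2}(n,m;\varepsilon)$; applying Lemma \ref{lemma866} (precisely as in (\ref{GMD2})) to the linear system (\ref{Le3.078}), together with the cofactor bounds derived from Lemma \ref{lemmabc} for the block $D$, yields
$$|C^{\beta_{0}}|\geq \frac{|Q_{\beta_{0}}[\varphi]|}{C\rho_{2}(n,m;\varepsilon)}\geq \frac{\eta\rho_{k+1;2}(n,m;\varepsilon)}{C}$$
once the off-diagonal error is absorbed. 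At the point $x=(\sqrt[m]{\varepsilon},0,\ldots,0,x_{n})$ with $0<x_{n}<\varepsilon+|x'|^{m}$, Corollary \ref{co.001} gives $|\partial_{n}u_{\beta_{0}}|\geq\sqrt[m]{\varepsilon}/(C\varepsilon)-C|x'|/\sqrt[m]{\varepsilon+|x'|^{m}}\geq 1/(C\varepsilon^{1-1/m})$, while the off-diagonal terms $C^{\alpha}\nabla u_{\alpha}$ for $\alpha\neq\beta_{0}$ and $\nabla u_{0}$ are absorbed using (\ref{Le2.036}) and Lemma \ref{lem89}. Plugging into the decomposition (\ref{Le2.015}) yields (\ref{PPN2}).

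For case $(\mathbf{\widetilde{\Phi}3})$ the lower bound comes directly from $\nabla u_{0}$: at $x=(\sqrt[m]{\varepsilon},0,\ldots,0,x_{n})$, assumption (\ref{PDSQ131}) gives $|\varphi^{1}(x',h(x'))|=\eta\,\varepsilon^{k/m}$, so Lemma \ref{lem89} yields $|\partial_{n}u_{01}|\geq \eta/(C\varepsilon^{1-k/m})$. The remaining task is to show that $\sum_{\alpha}C^{\alpha}\nabla u_{\alpha}$ cannot cancel this leading contribution. Under (\ref{PDSQ131}) the parity assumption $(\mathbf{A3})$ holds, hence by Proposition \ref{lekm323} one has $|Q_{\beta}[\varphi]|\leq C\eta$ for all $\beta$, so Proposition \ref{proposition123} gives $|C^{\alpha}|\leq C\eta/\rho_{0}$ (respectively $C\eta/\rho_{2}$); since $m>n$ and $k<m-n$, these are all $o(\eta\varepsilon^{k/m-1})$, giving (\ref{PPNM6}).

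\textbf{Main obstacle.} The most delicate point is the lower bound for $|Q_{\beta_{0}}[\varphi]|$: under $(\mathbf{\widetilde{\Phi}1})$ one must verify that the convergence rate in the rotational analogue of Lemma \ref{lemmaLw.01} is indeed smaller than $\rho_{k+1}(n,m;\varepsilon)$, which requires carefully tracking the extra $|x'|$ factor coming from $\psi_{\beta_{0}}$ when choosing the optimal $\tilde{\gamma}$; under $(\mathbf{\widetilde{\Phi}2})$ one must ensure that all remaining boundary terms in the decomposition of $Q_{\beta_{0}}$ are of strictly smaller order than the explicit leading integral, which is exactly where the extra hypothesis $\lambda+2\mu\neq 0$ and the specific choice of $k=m-n$ are used.
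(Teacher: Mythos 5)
Your proposal is correct and follows essentially the same approach as the paper. The paper likewise establishes the lower bound on $|Q_{2n-1}[\varphi]|$ (via Lemma \ref{OOT1} and, under $(\mathbf{\widetilde{\Phi}1})$, the rotational analogue Lemma \ref{LKL} of Lemma \ref{lemmaLw.01}), then uses (\ref{GMD2}) with the cofactor asymptotics from Lemma \ref{lemmabc} to get $|C^{2n-1}|\gtrsim\rho_{k+1}/\rho_2$, and evaluates $|\partial_n u_{2n-1}^n|\gtrsim\varepsilon^{1/m-1}$ at the designated point; the only superficial discrepancy is that the leading coefficient in your explicit $Q$-computation under $(\mathbf{\widetilde{\Phi}2})$ should be $\lambda+2\mu$ rather than $\mu$, which you in fact acknowledge implicitly by flagging the hypothesis $\lambda+2\mu\neq0$ as the crucial one.
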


\begin{remark}
Note that the main singularity of $|\nabla u|$ arises from $|\nabla u_{n+1}|$ under $(\mathbf{\widetilde{\Phi}1})$ or $(\mathbf{\widetilde{\Phi}2})$, while it comes from $|\nabla u_{0}|$ under $(\mathbf{\widetilde{\Phi}3})$.

\end{remark}

Before proving Theorem \ref{AAA666666}, we first estimate $|Q_{\beta}[\varphi]|$, $\beta=1,2,\cdots,\frac{n(n+1)}{2}$.

\begin{lemma}\label{OOT1}
Assume as in Theorem \ref{AAA666666}. Then,

$(a)$ if $(\mathbf{\widetilde{\Phi}1})$ holds, then
\begin{align}\label{LDAE1}
|Q_{\beta}[\varphi]|\leq&C(\eta\rho_{k}(n,m;\varepsilon)+\|\varphi\|_{C^{2}(\partial D)}),\quad\;\,\beta=1,2,\cdots,n,
\end{align}
and
\begin{align}\label{CCAAMM1}
|Q_{\beta}[\varphi]|\leq C(\eta+\|\varphi\|_{C^{2}(\partial D)}),\quad\;\,\beta=n+1,\cdots,\frac{n(n+1)}{2};
\end{align}

$(b)$ if $(\mathbf{\widetilde{\Phi}2})$ holds, then
\begin{align}\label{CCAAMM2}
|Q_{\beta}[\varphi]|\leq C(\eta+\|\varphi\|_{C^{2}(\partial D)}),\quad\;\,\beta=1,2,\cdots,n,
\end{align}
and
\begin{align}\label{LKM6}
\frac{\eta\rho_{k+1}(n,m;\varepsilon)}{C}\leq|Q_{2n-1}[\varphi]|\leq C\eta\rho_{k+1}(n,m;\varepsilon);
\end{align}

$(c)$ if $(\mathbf{\widetilde{\Phi}3})$ holds, then
\begin{align}\label{LADE2}
|Q_{\beta}[\varphi]|\leq C(\eta+\|\varphi\|_{C^{2}(\partial D)}),\quad\;\,\beta=1,2,\cdots,\frac{n(n+1)}{2}.
\end{align}
\end{lemma}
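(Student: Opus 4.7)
The plan is to adapt the techniques from the proofs of Proposition \ref{lekm323} and Lemma \ref{KM323}. For every $\beta$, we split $Q_{\beta}[\varphi]=\sum_{j=1}^{n}Q_{\beta,j}[\varphi]$ with $Q_{\beta,j}[\varphi]=\int_{\partial D_{1}}(\partial u_{0j}/\partial\nu_{0})|_{+}\cdot\psi_{\beta}$, expand the traction $[\partial u_{0j}/\partial\nu_{0}]_{\cdot}$ component by component, and replace $u_{0j}$ by the auxiliary function $\tilde u_{0j}$. The resulting principal term in each $Q_{\beta,j}$ is $\int_{|x'|<R}\varphi^{j}(x',h(x'))/(\varepsilon+h_{1}-h)\cdot(\psi_{\beta})_{j}\,dx'$ up to an $O(\|\varphi\|_{C^{2}(\partial D)})$ remainder, the latter controlled via Lemma \ref{lem89} together with $|\nu_{i}|\leq C|x'|^{m-1}$ for $i<n$. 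Whether this principal integral vanishes or produces the blow-up is decided by the parity of $\varphi$ against $(\mathbf{H5})$.

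For part (a), under $(\mathbf{\widetilde{\Phi}1})$ only the $k$-order growth (\ref{zw11}) is available. The Remark following Proposition \ref{lekm323} supplies
\[
|Q_{\beta}[\varphi]|\leq C(\eta\rho_{k}(n,m;\varepsilon)+\|\varphi\|_{C^{2}(\partial D)}),\;\beta\leq n,\qquad |Q_{\beta}[\varphi]|\leq C(\eta\rho_{k+1}(n,m;\varepsilon)+\|\varphi\|_{C^{2}(\partial D)}),\;\beta\geq n+1.
\]
Since $k>m-n$ with $k$ a positive integer we have $m<n+k$, hence $\rho_{k+1}(n,m;\varepsilon)=1$, which immediately gives (\ref{LDAE1}) and (\ref{CCAAMM1}). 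For part (c), the data $\varphi^{i}=\eta x_{i}|x_{i}|^{k-1}$ for $i<n$ and $\varphi^{n}\equiv 0$ fits exactly the $(\mathbf{A3})$ parity condition of Proposition \ref{lekm323} (the $\varphi^{n}$ condition being trivial); then $\rho_{A}=1/\rho_{0}$, $\rho_{B}=1/\rho_{2}$, and since $\|\varphi\|_{C^{2}(\partial D)}\leq C\eta$ we conclude $|Q_{\beta}[\varphi]|\leq C\eta$ for every $\beta$, proving (\ref{LADE2}). The upper bounds in (b) follow by the same route: $\varphi^{i}=\eta x_{1}|x_{1}|^{k-1}$ satisfies $(\mathbf{A2})$ with $j_{i}=1$ for all $i$, so Proposition \ref{lekm323} yields $|Q_{\beta}|\leq C\eta$ for $\beta\leq n$ (giving (\ref{CCAAMM2})) and $|Q_{\beta}|\leq C\eta\rho_{k+1}(n,m;\varepsilon)$ for $\beta\geq n+1$, which is the upper half of (\ref{LKM6}).

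The only new work is the lower bound in (\ref{LKM6}). With $\psi_{2n-1}=x_{n}e_{1}-x_{1}e_{n}$ we decompose
\[
Q_{2n-1,j}[\varphi]=\int_{\partial D_{1}}[\partial u_{0j}/\partial\nu_{0}]_{1}\,x_{n}-[\partial u_{0j}/\partial\nu_{0}]_{n}\,x_{1}.
\]
For $j\neq n$ the candidate principal term $\int\partial_{n}\tilde u_{0j}^{\,j}\,\nu_{n}\cdot(\psi_{2n-1})_{j}$ vanishes: when $j=1$, the factor $x_{n}\approx\varepsilon+h_{1}(x')$ is even in $x_{1}$ by $(\mathbf{H5})$ while $\varphi^{1}=\eta x_{1}|x_{1}|^{k-1}$ is odd in $x_{1}$; for $2\leq j\leq n-1$ the component $(\psi_{2n-1})_{j}$ is identically zero. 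Using Lemma \ref{lem89}, $|\nu_{i}|\leq C|x'|^{m-1}$ for $i<n$, and $x_{n}\sim\varepsilon+|x'|^{m}$ on $\Gamma_{R}^{+}$, a direct estimate of the remaining subterms shows $|Q_{2n-1,j}|\leq C\eta$ for each $j\neq n$. For $j=n$, the coefficient of $\partial_{n}u_{0n}^{n}\nu_{n}$ inside $[\partial u_{0n}/\partial\nu_{0}]_{n}$ sums to exactly $\lambda+2\mu$; substituting $\partial_{n}\tilde u_{0n}^{\,n}=-\varphi^{n}/(\varepsilon+h_{1}-h)$ modulo an $O(\eta)$ error gives
\[
Q_{2n-1,n}[\varphi]=-(\lambda+2\mu)\,\eta\int_{|x'|<R}\frac{|x_{1}|^{k+1}}{\varepsilon+|x'|^{m}}\,dx'+O(\eta),
\]
and for $k=m-n$ the explicit nonnegative integral is bounded below by $\rho_{k+1}(n,m;\varepsilon)/C=|\ln\varepsilon|/C$. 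Since $\lambda+2\mu\neq 0$ is built into $(\mathbf{\widetilde{\Phi}2})$ and $|\ln\varepsilon|\to\infty$, the leading contribution dominates for sufficiently small $\varepsilon$, establishing the lower inequality in (\ref{LKM6}).

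The principal obstacle is precisely this lower bound: all non-leading pieces of $Q_{2n-1}$, namely the $j\neq n$ summands and the subleading contributions to $Q_{2n-1,n}$, must be shown to be $O(\eta)$ rather than $O(\eta\rho_{k+1})$, so that the $(\lambda+2\mu)\eta\rho_{k+1}$ term remains detectable. This rests on the parity cancellations enforced by $(\mathbf{H5})$, on the sign $x_{1}^{2}|x_{1}|^{k-1}\geq 0$ which prevents internal cancellation inside the principal integral, and on the non-degeneracy assumption $\lambda+2\mu\neq 0$.
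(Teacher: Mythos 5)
Your proposal is correct and follows essentially the same route as the paper: part (a), part (c), the upper bounds in (b), and the bound on $Q_{2n-1}^{1}$ are all read off from Proposition \ref{lekm323} and its Remark (with the observation that $k>m-n$ forces $\rho_{k+1}=1$), and the lower bound in (\ref{LKM6}) comes from isolating the $-(\lambda+2\mu)\int\partial_{n}u_{0n}^{n}\nu_{n}x_{1}$ piece, replacing $u_{0n}^{n}$ by $\tilde u_{0n}^{n}$ to produce the nonnegative integral $\eta\int|x_{1}|^{k+1}/(\varepsilon+|x'|^{m})\sim\eta\rho_{k+1}$, and noting all other contributions are $O(\eta+\|\varphi\|_{C^{2}(\partial D)})$, which is dominated as $\rho_{k+1}=|\ln\varepsilon|\to\infty$. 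One small overclaim: in part (c) you write $\|\varphi\|_{C^{2}(\partial D)}\leq C\eta$, which is not guaranteed since $(\widetilde{\Phi}3)$ only prescribes $\varphi$ on $\Gamma_{R}^{-}$; this is harmless here because (\ref{LADE2}) retains the $\|\varphi\|_{C^{2}(\partial D)}$ term, so the desired bound still follows directly from Proposition \ref{lekm323} without that extra assumption.
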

\begin{proof}
{\bf STEP 1.} Proofs of (\ref{LDAE1}), (\ref{CCAAMM1}), (\ref{CCAAMM2}) and (\ref{LADE2}). The proofs of (\ref{LDAE1}), (\ref{CCAAMM1}), (\ref{CCAAMM2}) and (\ref{LADE2}) are contained in the proof of Proposition \ref{lekm323} and thus omitted.

{\bf STEP 2.} Proof of (\ref{LKM6}). Denote
\begin{align*}
Q_{2n-1}[\varphi]=\sum^{n}_{j=1}\int_{\partial D_{1}}\frac{\partial u_{0j}}{\partial\nu_{0}}\Big|\cdot\psi_{2n-1},
\end{align*}
where $u_{0j}$, $j=1,2,\cdots,n$, are defined by (\ref{RTP101}). By definition, we split $Q_{2n-1}[\varphi]$ into two parts in the following.
\begin{align*}
Q_{2n-1}^{1}[\varphi]=&\sum^{n}_{j=1}\int_{\partial D_{1}}\left[\lambda\sum^{n}_{k=1}\partial_{k}u_{0j}^{k}\nu_{1}+\mu\sum^{n}_{i=1}(\partial_{i}u_{0j}^{1}+\partial_{1}u_{0j}^{i})\nu_{i}\right]x_{n}\\
&-\sum^{n}_{j=1}\int_{\partial D_{1}}\left[\lambda\sum^{n-1}_{k=1}\partial_{k}u_{0j}^{k}\nu_{n}+\mu\sum^{n-1}_{i=1}(\partial_{i}u_{0j}^{n}+\partial_{n}u_{0j}^{i})\nu_{i}\right]x_{1}\\
&-\sum^{n-1}_{j=1}(\lambda+2\mu)\int_{\partial D_{1}}\partial_{n}u_{0j}^{n}\nu_{n}x_{1},
\end{align*}
and
\begin{align*}
Q_{2n-1}^{2}[\varphi]=-(\lambda+2\mu)\int_{\partial D_{1}}\partial_{n}u_{0n}^{n}\nu_{n}x_{1}.
\end{align*}
In view of the fact that $|x_{n}|=|\varepsilon+h_{1}(x_{1})|\leq C(\varepsilon+|x_{1}|^{m})$ and $|\nu_{i}|\leq C|x'|^{m-1}$ on $\Gamma^{+}_{R}$, $i=1,\cdots,n-1$, it follows from $\tilde{u}^{n}_{0j}$=0, $j\neq n$, and Lemma \ref{lem89} that
\begin{align}\label{LKM1}
|Q_{2n-1}^{1}[\varphi]|\leq C(\eta+\|\varphi\|_{C^{2}(\partial D)}).
\end{align}

As for $Q_{2n-1}^{2}[\varphi]$, in view of (\ref{KKT66}), on one hand, we have
\begin{align}
|Q_{2n-1}^{2}[\varphi]|\leq&|\lambda+2\mu|\left|\int_{|x'|<R}\frac{\varphi^{n}(x',h(x'))x_{1}}{\varepsilon+h_{1}(x')-h(x')}\right|+C\|\varphi\|_{C^{2}(\partial D)}\notag\\
=&|\lambda+2\mu|\int_{|x'|<R}\frac{\eta|x_{1}|^{k+1}}{\varepsilon+|x'|^{m}}+C\|\varphi\|_{C^{2}(\partial D)}\notag\\
\leq&C(\eta\rho_{k+1}(n,m;\varepsilon)+\|\varphi\|_{C^{2}(\partial D)}).\label{LKM2}
\end{align}
On the other hand, we have
\begin{align}
|Q_{2n-1}^{2}[\varphi]|\geq&|\lambda+2\mu|\left|\int_{|x'|<R}\frac{\varphi^{n}(x',h(x')x_{1}}{\varepsilon+h_{1}(x')-h(x')}\right|-C\|\varphi\|_{C^{2}(\partial D)}\notag\\
=&|\lambda+2\mu|\int_{|x'|<R}\frac{\eta|x_{1}|^{k+1}}{\varepsilon+|x'|^{m}}-C\|\varphi\|_{C^{2}(\partial D)}\notag\\
\geq&\frac{\eta\rho_{k+1}(n,m;\varepsilon)}{C}-C\|\varphi\|_{C^{2}(\partial D)}.\label{LKM3}
\end{align}
Then combining with (\ref{LKM1})--(\ref{LKM3}), we obtain that (\ref{LKM6}) holds.

\end{proof}

\begin{lemma}\label{LKL}
Assume as in Theorem \ref{AAA666666}. Then, if $(\mathbf{\widetilde{\Phi}1})$ holds,
\begin{align*}
|Q_{2n-1}[\varphi]-Q^{\ast}_{2n-1}[\varphi]|\leq C\varepsilon^{\frac{n+k-m}{m(n+k)}}.
\end{align*}
Consequently,
\begin{align*}
Q_{2n-1}[\varphi]\rightarrow Q^{\ast}_{2n-1}[\varphi],\;\;\, \mathrm{as}\;\, \varepsilon\rightarrow0.
\end{align*}

\end{lemma}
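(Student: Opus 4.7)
The strategy is to follow the proof of Lemma \ref{lemmaLw.01} verbatim, adapted to the rigid-motion index $\beta = 2n-1$, which corresponds (by inspection of the ordering used earlier in the paper) to $\psi_{2n-1} = x_n e_1 - x_1 e_n$. Applying the Green identity (\ref{Le2.01222}) in $\Omega$ with $(u_{2n-1}, u_0)$ and in $\Omega^{\ast}$ with $(u_{2n-1}^{\ast}, u_0^{\ast})$, and using that $u_0 = u_0^{\ast} = \varphi$ on $\partial D$, one first rewrites
\begin{align*}
Q_{2n-1}[\varphi] - Q^{\ast}_{2n-1}[\varphi] = \int_{\partial D}\frac{\partial(u_{2n-1}-u^{\ast}_{2n-1})}{\partial\nu_{0}}\Big|_{+}\cdot\varphi(x),
\end{align*}
where $u^{\ast}_{2n-1}$ solves the analogue of (\ref{l03.001}) with boundary value $\psi_{2n-1}$ on $\partial D_1^{\ast}\setminus\{0\}$. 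Introduce auxiliary functions $\tilde{u}_{2n-1} = \psi_{2n-1}\bar{v}$ and $\tilde{u}^{\ast}_{2n-1} = \psi_{2n-1}\bar{v}^{\ast}$, together with the remainders $w_{2n-1} = u_{2n-1} - \tilde{u}_{2n-1}$ and $w^{\ast}_{2n-1} = u^{\ast}_{2n-1} - \tilde{u}^{\ast}_{2n-1}$, which satisfy the Corollary \ref{co.001} and (\ref{Lw3.005})-type gradient estimates for the rotation index.

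Next, I would split the boundary integral into an outer piece on $\partial D\setminus\mathcal{C}_{\varepsilon^{1/m-\tilde\gamma}}$ and an inner piece on $\partial D\cap\mathcal{C}_{\varepsilon^{1/m-\tilde\gamma}}$, with $\tilde\gamma$ free to be optimized. For the outer piece, the chain of estimates (\ref{Lw3.007})--(\ref{Lw3.010}) transfers directly: one first bounds $|u_{2n-1}-u^{\ast}_{2n-1}|$ on $\partial D_1\setminus D_1^{\ast}$ and on $\partial D_1^{\ast}\setminus(D_1\cup\mathcal{C})$ (using Corollary \ref{co.001} and the regularity of $\psi_{2n-1}$), propagates the $L^\infty$-bound into $D\setminus(\overline{D_1\cup D_1^{\ast}\cup\mathcal{C}})$ via the Lamé maximum principle of \cite{MMN2007}, then converts it to a gradient bound on $\partial D$ by interior and boundary Schauder estimates. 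The key new feature that lets us beat the exponent of Lemma \ref{lemmaLw.01} enters in the inner piece: the factor $x_n$ in $\psi_{2n-1}$ satisfies $|x_n|\leq C(\varepsilon+|x'|^m)$ on $\Gamma^{+}_{R}$, so that $|\partial_n(\tilde{u}_{2n-1}-\tilde{u}^{\ast}_{2n-1})|$ loses the $|x'|^{-m}$ blow-up from (\ref{Lw3.004}) and gains a corresponding smallness, and the $x_1$ prefactor yields an extra $|x'|$ in all tangential components compared with (\ref{Lw3.003}).

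Combining these improved pointwise bounds with the Taylor expansion of $\varphi$ at the origin, and using the growth hypothesis $|\varphi|\leq\eta|x|^k$ together with $k>m-n$ from $(\mathbf{\widetilde{\Phi}1})$ to control the order of vanishing, the inner integral $\mathcal{B}^{in} = \mathcal{B}_{\tilde u}+\mathcal{B}_w$ is bounded by a power of $\varepsilon^{1/m-\tilde\gamma}$ of exponent $n+k$; balancing this against the outer rate (which scales like $\varepsilon^{m\tilde\gamma}$ times a fixed shift coming from the Schauder step) by choosing $\tilde\gamma$ appropriately yields the claimed exponent $(n+k-m)/(m(n+k))$. The consequence $Q_{2n-1}[\varphi]\to Q^{\ast}_{2n-1}[\varphi]$ then follows since this exponent is strictly positive precisely when $k>m-n$.

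The main technical obstacle is the careful bookkeeping inside $\mathcal{C}_{\varepsilon^{1/m-\tilde\gamma}}$: because $\psi_{2n-1}$ couples the $e_1$ and $e_n$ components, one must track both the $x_n$-improvement in the $e_1$-slot and the $x_1$-improvement in the $e_n$-slot simultaneously, and verify that the dominant contribution to $\mathcal{B}_{\tilde u}$ comes from the leading $|x|^k$-term in the Taylor expansion of $\varphi$ (the $\|\nabla^2\varphi\|_{L^\infty}$ remainder giving a strictly smaller exponent). Once this pointwise analysis is carried out in parallel to (\ref{Lw3.011})--(\ref{Lw3.017}), the optimization over $\tilde\gamma$ is mechanical and the claimed rate drops out.
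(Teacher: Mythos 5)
Your overall architecture matches the paper's: apply the Green identity to transfer the difference to $\partial D$, introduce the auxiliary functions $\psi_{2n-1}\bar{v}$ and $\psi_{2n-1}\bar{v}^{\ast}$, split the boundary integral into an outer piece and an inner piece around $\mathcal{C}_{\varepsilon^{1/m-\tilde\gamma}}$, treat the outer piece by maximum principle plus Schauder estimates, estimate the inner piece pointwise, and optimize $\tilde\gamma$. However, two points in your description of the mechanism are substantively wrong, and as written the argument would not produce the stated exponent.

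First, the claim that ``the factor $x_n$ in $\psi_{2n-1}$ \ldots loses the $|x'|^{-m}$ blow-up from (\ref{Lw3.004})'' misattributes where the savings come from. The $x_n$-prefactor sits in the $e_1$-slot of $\tilde{u}_{2n-1}=(x_n\bar{v},0,\ldots,0,-x_1\bar{v})$, and there it does tame the normal derivative. But the dominant contribution to $\partial_n(\tilde{u}_{2n-1}-\tilde{u}^{\ast}_{2n-1})$ comes from the $e_n$-slot $-x_1\bar{v}$, where the $x_1$-prefactor saves only \emph{one} power of $|x'|$, giving the bound $\frac{C\varepsilon}{|x'|^{m-1}(\varepsilon+|x'|^m)}$ (this is (\ref{LKT6.004}) in the paper), not a bounded quantity. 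Consequently the dominant piece $\mathcal{B}^2_{\tilde u}$ of the inner integral scales like $\varepsilon^{(1/m-\tilde\gamma)(n+k-m)}$, not $\varepsilon^{(1/m-\tilde\gamma)(n+k)}$ as you assert. With your exponent $n+k$, balancing against the outer rate $\varepsilon^{m\tilde\gamma-\frac{m-1}{m}}$ yields $\frac{n+k}{m(m+n+k)}$, not $\frac{n+k-m}{m(n+k)}$ --- and the hypothesis $k>m-n$ from $(\mathbf{\widetilde{\Phi}1})$ would become irrelevant, which contradicts the very statement of the lemma.

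Second, the remark that ``the chain of estimates (\ref{Lw3.007})--(\ref{Lw3.010}) transfers directly'' glosses over a key change: the $L^\infty$ bound on $\partial D_1^{\ast}\setminus(D_1\cup\mathcal{C}_{\varepsilon^{\gamma}})$ improves from $C\varepsilon^{1-m\gamma}$ in Lemma \ref{lemmaLw.01} to $C\varepsilon^{1-(m-1)\gamma}$ here, because $|\nabla u_{2n-1}|$ satisfies the sharper decay (\ref{Le2.0291}), $|\nabla u_{2n-1}|\leq\frac{C(\varepsilon+|x'|)}{\varepsilon+d^m(x')}$, rather than (\ref{Le2.029}). This moves the optimal choice from $\gamma=\frac{1}{m+1}$ to $\gamma=\frac{1}{m}$ and produces the outer rate $\varepsilon^{m\tilde\gamma-\frac{m-1}{m}}$; this is the ``fixed shift'' you mention only vaguely, but one must actually verify it since it is what, together with the inner exponent $n+k-m$, balances to $\tilde\gamma=\frac{n+k-1}{m(n+k)}$ and the claimed rate $\varepsilon^{\frac{n+k-m}{m(n+k)}}$.
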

\begin{proof}
Similarly as in Lemma \ref{lemmaLw.01}, we have
\begin{align*}
Q_{2n-1}[\varphi]-Q^{\ast}_{2n-1}[\varphi]=\int_{\partial D}\frac{\partial(u_{2n-1}-u_{2n-1}^{\ast})}{\partial\nu_{0}}\Big|_{+}\cdot\varphi(x).
\end{align*}
where $u_{n+1}^{\ast}$ solves
\begin{equation}\label{LKTl03.001}
\begin{cases}
\mathcal{L}_{\lambda,\mu}u_{2n-1}^{\ast}=0,\quad\;\,&\mathrm{in}\;\,\Omega^{\ast},\\
u_{2n-1}^{\ast}=\psi_{2n-1},\quad\;\,&\mathrm{on}\;\,\partial D_{1}^{\ast}\setminus\{0\},\\
u_{2n-1}^{\ast}=0,\quad\;\,&\mathrm{on}\;\,\partial D.
\end{cases}
\end{equation}

Similarly, we introduce two auxiliary functions
$$\tilde{u}_{2n-1}=(x_{n}\bar{v},0,\cdots,0,-x_{1}\bar{v}),\quad\mathrm{and}\quad\tilde{u}_{2n-1}^{\ast}=(x_{n}\bar{v}^{\ast},0,\cdots,0,-x_{1}\bar{v}^{\ast}),$$
where $\bar{v}$ and $\bar{v}^{\ast}$ are defined as before. It follows from ({\bf{\em H3}}) that for $x\in\Omega_{R}^{\ast}$,
\begin{align}\label{LKT6.003}
|\nabla_{x'}(\tilde{u}_{2n-1}-\tilde{u}^{\ast}_{2n-1})|\leq C,
\end{align}
and
\begin{align}\label{LKT6.004}
|\partial_{n}(\tilde{u}_{2n-1}-\tilde{u}^{\ast}_{2n-1})|\leq\frac{C\varepsilon}{|x'|^{m-1}(\varepsilon+|x'|^{m})}.
\end{align}
Applying Corollary \ref{co.001} to (\ref{LKTl03.001}), we derive that for $x\in\Omega_{R}^{\ast}$,
\begin{align}\label{LKT6.005}
|\nabla(u_{2n-1}^{\ast}-\tilde{u}_{2n-1}^{\ast})|\leq C,
\end{align}
and
\begin{align}\label{LKT6.006}
|\nabla_{x'}u_{2n-1}^{\ast}|\leq C,\quad\;|\partial_{n}u_{2n-1}^{\ast}|\leq\frac{C}{|x'|^{m-1}}.
\end{align}

We now divide into two steps to estimate $|Q_{2n-1}[\varphi]-Q^{\ast}_{2n-1}[\varphi]|$.

{\bf STEP 1.} Note that $u_{n+1}-u_{2n-1}^{\ast}$ satisfies
\begin{align*}
\begin{cases}
\mathcal{L}_{\lambda,\mu}(u_{2n-1}-u_{2n-1}^{\ast})=0,&\mathrm{in}\;\,D\setminus(\overline{D_{1}\cup D_{1}^{\ast}}),\\
u_{2n-1}-u_{2n-1}^{\ast}=\psi_{2n-1}-u_{2n-1}^{\ast},&\mathrm{on}\;\,\partial D_{1}\setminus D_{1}^{\ast},\\
u_{2n-1}-u_{2n-1}^{\ast}=u_{2n-1}-\psi_{2n-1},&\mathrm{on}\;\,\partial D_{1}^{\ast}\setminus(D_{1}\cup\{0\}),\\
u_{2n-1}-u_{2n-1}^{\ast}=0,&\mathrm{on}\;\,\partial D.
\end{cases}
\end{align*}
We first estimate $|u_{2n-1}-u_{2n-1}^{\ast}|$ on $\partial(D_{1}\cup D_{1}^{\ast})\setminus\mathcal{C}_{\varepsilon^{\gamma}}$, where $\mathcal{C}_{\varepsilon^{\gamma}}$ is defined by (\ref{LNM656}) and $0<\gamma<1/2$ to be determined later.

Similarly as before, for $x\in\partial D_{1}\setminus D_{1}^{\ast}$,
\begin{align}\label{LKT6.007}
|u_{2n-1}-u_{2n-1}^{\ast}|\leq C\varepsilon.
\end{align}
Utilizing (\ref{Le2.0291}), it follows that for $x\in\partial D_{1}^{\ast}\setminus(D_{1}\cup\mathcal{C}_{\varepsilon^{\gamma}})$,
\begin{align}\label{LKT6.008}
|u_{2n-1}-u_{2n-1}^{\ast}|\leq C\varepsilon^{1-(m-1)\gamma}.
\end{align}
Similarly, combining with (\ref{Lea2.0028}), (\ref{LKT6.004}) and (\ref{LKT6.005}), we deduce that for $x\in\Omega_{R}^{\ast}\cap\{|x'|=\varepsilon^{\gamma}\}$,
\begin{align*}
|\partial_{n}(u_{2n-1}-u_{2n-1}^{\ast})|\leq&\frac{C}{\varepsilon^{(2m-1)\gamma-1}}.
\end{align*}
Thus, due to the fact that $\tilde{u}_{2n-1}-\tilde{u}_{2n-1}^{\ast}=0$ on $\partial D$, we have
\begin{align}
|(u_{2n-1}-u_{2n-1}^{\ast})(x',x_{n})|\leq C\varepsilon^{1-(m-1)\gamma},\quad\;\,\mathrm{for}\;\,x\in\Omega_{R}^{\ast}\cap\{|x'|=\varepsilon^{\gamma}\}.\label{LKT6.009}
\end{align}
Choose $\gamma=\frac{1}{m}$. Then it follows from (\ref{LKT6.007}), (\ref{LKT6.008}) and (\ref{LKT6.009}) that
$$|u_{2n-1}-u_{2n-1}^{\ast}|\leq C\varepsilon^{\frac{1}{m}},\quad\;\,\mathrm{on}\;\,\partial\big(D\setminus\big(\overline{D_{1}\cup D_{1}^{\ast}\cup\mathcal{C}_{\varepsilon^{\frac{1}{m}}}}\big)\big).$$
In view of the maximum principle for Lam\'{e} system in \cite{MMN2007}, we have
$$|u_{2n-1}-u_{2n-1}^{\ast}|\leq C\varepsilon^{\frac{1}{m}},\quad\;\,\mathrm{on}\;\,D\setminus\big(\overline{D_{1}\cup D_{1}^{\ast}\cup\mathcal{C}_{\varepsilon^{\frac{1}{m}}}}\big).$$
Then using the standard interior and boundary estimates for Lam\'{e} system, we deduce that, for any $\frac{m-1}{m^{2}}<\tilde{\gamma}<\frac{1}{m}$,
$$|\nabla(u_{2n-1}-u_{2n-1}^{\ast})|\leq C\varepsilon^{m\tilde{\gamma}-\frac{m-1}{m}},\quad\;\,\mathrm{on}\;\,D\setminus\big(\overline{D_{1}\cup D_{1}^{\ast}\cup\mathcal{C}_{\varepsilon^{\frac{1}{m}-\tilde{\gamma}}}}\big),$$
which implies that
\begin{align}\label{LKT6.010}
|\mathcal{B}^{out}|:=\left|\int_{\partial D\setminus\mathcal{C}_{\varepsilon^{\frac{1}{m}-\tilde{\gamma}}}}\frac{\partial(u_{2n-1}-u_{2n-1}^{\ast})}{\partial\nu_{0}}\Big|_{+}\cdot\varphi(x)\right|\leq C\varepsilon^{m\tilde{\gamma}-\frac{m-1}{m}},
\end{align}
where $\frac{m-1}{m^{2}}<\tilde{\gamma}<\frac{1}{m}$ will be determined later.

{\bf STEP 2.} we next estimate
\begin{align*}
\mathcal{B}^{in}:=&\int_{\partial D\cap\mathcal{C}_{\varepsilon^{\frac{1}{m}-\tilde{\gamma}}}}\frac{\partial(u_{2n-1}-u_{2n-1}^{\ast})}{\partial\nu_{0}}\Big|_{+}\cdot\varphi(x)\\
=&\int_{\partial D\cap\mathcal{C}_{\varepsilon^{\frac{1}{m}-\tilde{\gamma}}}}\frac{\partial(\tilde{u}_{2n-1}-\tilde{u}_{2n-1}^{\ast})}{\partial\nu_{0}}\Big|_{+}\cdot\varphi(x)\\
&+\int_{\partial D\cap\mathcal{C}_{\varepsilon^{\frac{1}{m}-\tilde{\gamma}}}}\frac{\partial(w_{2n-1}-\tilde{w}_{2n-1}^{\ast})}{\partial\nu_{0}}\Big|_{+}\cdot\varphi(x)
=:\mathcal{B}_{\tilde{u}}+\mathcal{B}_{\tilde{w}},
\end{align*}
where $w_{2n-1}=u_{2n-1}-\tilde{u}_{2n-1}$, $w_{2n-1}^{\ast}=u_{2n-1}^{\ast}-\tilde{u}_{2n-1}^{\ast}$. First, we split $\mathcal{B}_{\tilde{u}}$ into two parts as follows.
\begin{align*}
\mathcal{B}^{1}_{\tilde{u}}=&\int_{\partial D\cap\mathcal{C}_{\varepsilon^{\frac{1}{m}-\tilde{\gamma}}}}\bigg\{\lambda\sum^{n}_{k=1}\partial_{1}(\tilde{u}^{1}_{2n-1}-\tilde{u}^{\ast1}_{2n-1})\nu_{k}\varphi^{k}+\lambda\sum^{n-1}_{k=1}\partial_{n}(\tilde{u}^{n}_{2n-1}-\tilde{u}^{\ast n}_{2n-1})\nu_{k}\varphi^{k}\\
&+\mu\sum^{n}_{k=1}\partial_{k}(\tilde{u}^{1}_{2n-1}-\tilde{u}^{\ast1}_{2n-1})[\nu_{k}\varphi^{1}+\nu_{1}\varphi^{k}]\\
&+\mu\sum^{n-1}_{k=1}\partial_{k}(\tilde{u}^{n}_{2n-1}-\tilde{u}^{\ast n}_{2n-1})[\nu_{n}\varphi^{k}+\nu_{k}\varphi^{n}]\bigg\},
\end{align*}
and
\begin{align*}
\mathcal{B}^{2}_{\tilde{u}}=&\int_{\partial D\cap\mathcal{C}_{\varepsilon^{\frac{1}{m}-\tilde{\gamma}}}}(\lambda+2\mu)\partial_{n}(\tilde{u}^{n}_{2n-1}-\tilde{u}^{\ast n}_{2n-1})\nu_{n}\varphi^{n}.
\end{align*}

Making use of (\ref{zw11}) and (\ref{LKT6.003})--(\ref{LKT6.004}), we obtain
\begin{align}\label{TYU612}
|\mathcal{B}^{1}_{\tilde{u}}|\leq&\int_{\partial D\cap\mathcal{C}_{\varepsilon^{\frac{1}{m}-\tilde{\gamma}}}}C|x'|^{k}\leq C\varepsilon^{(\frac{1}{m}-\tilde{\gamma})(n+k-1)},
\end{align}
and
\begin{align}\label{TYU613}
|\mathcal{B}^{2}_{\tilde{u}}|\leq&\int_{\partial D\cap\mathcal{C}_{\varepsilon^{\frac{1}{m}-\tilde{\gamma}}}}C|x'|^{k+1-m}\leq C\varepsilon^{(\frac{1}{m}-\tilde{\gamma})(n+k-m)}.
\end{align}
Then it follows from (\ref{TYU612}) and (\ref{TYU613}) that
\begin{align}\label{TYU615}
|\mathcal{B}_{\tilde{u}}|\leq&C\varepsilon^{(\frac{1}{m}-\tilde{\gamma})(n+k-m)}.
\end{align}

We now estimate $\mathcal{B}_{\tilde{w}}$. Applying Corollary \ref{co.001}, we derive that for $0<|x'|\leq R$,
\begin{align}\label{TYU616}
|\nabla w_{2n-1}(x)|\leq C,\;\,\,|\nabla w_{2n-1}^{\ast}(x)|\leq C.
\end{align}
By definition,
\begin{align*}
\mathcal{B}_{\tilde{w}}=&\int_{\partial D\cap\mathcal{C}_{\varepsilon^{\frac{1}{m}-\tilde{\gamma}}}}\bigg\{\lambda\sum^{n}_{i,j=1}\partial_{i}(w_{2n-1}^{i}-w_{2n-1}^{\ast i})\nu_{j}\varphi^{j}(x)\\
&+\mu\sum^{n}_{i,j=1}[\partial_{j}(w_{2n-1}^{i}-w_{2n-1}^{\ast i})+\partial_{i}(w_{2n-1}^{j}-w_{2n-1}^{\ast j})]\nu_{j}\varphi^{i}(x)\bigg\}.
\end{align*}
Then utilizing (\ref{zw11}) and (\ref{TYU616}), we derive
\begin{align*}
|\mathcal{B}_{\tilde{w}}|\leq C\varepsilon^{(\frac{1}{m}-\tilde{\gamma})(n+k-1)}.
\end{align*}
This, together with (\ref{TYU615}), yields that
\begin{align}\label{TYUU618}
|\mathcal{B}_{in}|\leq C\varepsilon^{(\frac{1}{m}-\tilde{\gamma})(n+k-m)}.
\end{align}
Thus, in light of (\ref{LKT6.010}), (\ref{TYUU618}) and picking $\tilde{\gamma}=\frac{n+k-1}{m(n+k)}$, we obtain
\begin{align*}
|Q_{2n-1}[\varphi]-Q^{\ast}_{2n-1}[\varphi]|\leq C\varepsilon^{\frac{n+k-m}{m(n+k)}}.
\end{align*}

\end{proof}

\noindent{\bf{Proof of Theorem \ref{AAA666666}.}} {\bf STEP 1.} Proof of (\ref{PPN2}).  Similarly as (\ref{Lw3.018}), it follows from assumptions $(\mathbf{\widetilde{\Phi}1})$ and $(\mathbf{\widetilde{\Phi}2})$ in Theorem \ref{AAA666666}, Lemma \ref{OOT1} and Lemma \ref{LKL} that there exists a universal constant $C_{0}>0$ such that for a sufficiently small $\varepsilon>0$,
\begin{align}\label{LPN6.018}
|Q_{2n-1}[\varphi]|>\frac{\rho_{k+1}(n,m;\varepsilon)}{C_{0}},
\end{align}
Denote $D^{\ast}=(a^{\ast}_{\alpha\beta})_{\frac{n(n-1)}{2}\times\frac{n(n-1)}{2}}$, where $a^{\ast}_{\alpha\beta}$ is the cofactor of $a_{\alpha\beta}$, $\alpha,\beta=n+1,\cdots,\frac{n(n+1)}{2}$. Then it follows from Lemma \ref{lemmabc} that for $n\geq3$,
\begin{align}
a^{\ast}_{\alpha\alpha}\thicksim&(\rho_{2}(n,m;\varepsilon))^{\frac{n(n-1)}{2}-1},\quad\;\,\alpha=n+1,\cdots,\frac{n(n+1)}{2},\label{LKT6.019}\\
a^{\ast}_{\alpha\beta}\thicksim&(\rho_{2}(n,m;\varepsilon))^{\frac{n(n-1)}{2}-2},\quad\;\,\alpha\neq\beta,\;\,\alpha,\beta=n+1,\cdots,\frac{n(n+1)}{2}.\label{LKT6.0191}
\end{align}

Recalling the definition of $C^{2n-1}$, it follows from Lemma \ref{lemmabc}, Lemma \ref{OOT1}, (\ref{GMD2}) and (\ref{LPN6.018})--(\ref{LKT6.0191}) that for a sufficiently small $\varepsilon>0$,
\begin{align}\label{LKT6.022}
|C^{2n-1}|\geq\frac{1}{2\det D}a^{\ast}_{2n-1,2n-1}|Q_{2n-1}[\varphi]|\geq\frac{\rho_{k+1}(n,m;\varepsilon)}{C\rho_{2}(n,m;\varepsilon)}.
\end{align}

In view of the definition of $\tilde{u}_{2n-1}$, we obtain that $\tilde{u}_{2n-1}^{n}=-x_{1}\bar{v}$. Thus, by Corollary \ref{co.001}, we deduce that for $x'=(\sqrt[m]{\varepsilon},0,\cdots,0)$,
\begin{align}\label{KLMN1}
|\partial_{n}u_{2n-1}^{n}|\geq&|\partial_{n}\tilde{u}^{n}_{2n-1}|-|\partial_{n}(u_{2n-1}^{n}-\tilde{u}^{n}_{2n-1})|\geq\frac{x_{1}}{C(\varepsilon+|x'|^{m})}\geq\frac{1}{C\varepsilon^{1-\frac{1}{m}}},
\end{align}
and for $n+1\leq\alpha\leq\frac{n(n+1)}{2}$, $\alpha\neq2n-1$,
\begin{align}\label{KLMN2}
|\partial_{n}u_{\alpha}^{n}|\leq&|\partial_{n}\tilde{u}^{n}_{\alpha}|+|\partial_{n}(u_{\alpha}^{n}-\tilde{u}^{n}_{\alpha})|\leq C,\quad\;\,\mathrm{for}\;x'=(\sqrt[m]{\varepsilon},0,\cdots,0).
\end{align}

It follows from (\ref{LKT6.022})--(\ref{KLMN2}) that for $x'=(\sqrt[m]{\varepsilon},0,\cdots,0)$,
\begin{align}\label{KLMN3}
\Big|\sum^{\frac{n(n+1)}{2}}_{\alpha=n+1}C^{\alpha}\nabla u_{\alpha}\Big|\geq\Big|\sum^{\frac{n(n+1)}{2}}_{\alpha=n+1}C^{\alpha}\partial_{n}u_{\alpha}^{n}\Big|
\geq&\left|C^{2n-1}\partial_{n}u_{2n-1}^{n}\right|-\Big|\sum^{\frac{n(n+1)}{2}}_{\alpha=n+1,\alpha\neq2n-1}C^{\alpha}\partial_{n}u_{\alpha}^{n}\Big|\notag\\
\geq&\frac{\rho_{k+1}(n,m;\varepsilon)}{C\varepsilon^{1-\frac{1}{m}}\rho_{2}(n,m;\varepsilon)},
\end{align}
while, by means of Corollary \ref{co.001}, Lemma \ref{lem89}, Proposition \ref{proposition123} and Proposition \ref{lekm323}, we obtain that
\begin{align}\label{KLMN6}
|\nabla u_{0}|\leq&\frac{C}{\varepsilon^{1-\frac{k}{m}}},
\end{align}
and

$(i)$ if $(\mathbf{\widetilde{\Phi}1})$ holds,
\begin{align}\label{KLMN5}
\left|\sum^{n}_{\alpha=1}C^{\alpha}\nabla u_{\alpha}\right|\leq&
\frac{C\rho_{k}(n,m;\varepsilon)}{\varepsilon^{\frac{n-1}{m}}},&k>m-n,\;m>n;
\end{align}

$(ii)$ if $(\mathbf{\widetilde{\Phi}2})$ holds,
\begin{align}\label{KLMN9}
\left|\sum^{n}_{\alpha=1}C^{\alpha}\nabla u_{\alpha}\right|\leq&
\frac{C}{\varepsilon^{\frac{n-1}{m}}},& k=m-n,\;m>n+1.
\end{align}
Then it follows from (\ref{Le2.015}) and (\ref{KLMN3})--(\ref{KLMN9}) that for $x'=(\sqrt[m]{\varepsilon},0,\cdots,0)$,
\begin{align*}
|\nabla u(x)|\geq\frac{\rho_{k+1}(n,m;\varepsilon)}{C\varepsilon^{1-\frac{1}{m}}\rho_{2}(n,m;\varepsilon)}.
\end{align*}
That is, we prove (\ref{PPN2}).

{\bf STEP 2.} Proof of (\ref{PPNM6}). It follows from Lemma \ref{lem89} that for $x'=(\sqrt[m]{\varepsilon},0,\cdots,0)$,
\begin{align}\label{CRKN1}
|\nabla u_{0}(x)|\geq\frac{|\varphi^{1}(x',h(x'))|}{C(\varepsilon+|x'|^{m})}\geq\frac{|x_{1}|^{k}}{C(\varepsilon+|x_{1}|^{m})}\geq\frac{1}{C\varepsilon^{1-\frac{k}{m}}},
\end{align}
while, in light of Corollary \ref{co.001}, Proposition \ref{proposition123} and Lemma \ref{OOT1}, we deduce that if condition $(\mathbf{\widetilde{\Phi}3})$ in Theorem \ref{AAA666666} holds,
\begin{align*}
\left|\sum^{n}_{\alpha=1}C^{\alpha}\nabla u_{\alpha}(x)\right|\leq\frac{C}{\varepsilon^{\frac{n-1}{m}}},\quad\mbox{and}\quad
\left|\sum^{\frac{n(n+1)}{2}}_{\alpha=n+1}C^{\alpha}\nabla u_{\alpha}\right|\leq\frac{C}{\varepsilon\rho_{2}(n,m;\varepsilon)}.
\end{align*}
This, combining with (\ref{CRKN1}), completes the proof of (\ref{PPNM6}).

%

\begin{remark}
If conditions (\ref{KHMDC1}) and (\ref{KKT66}) hold in Theorem \ref{AAA666666} for $k=1$, $m=n+1$ or for $k<m-n$,  we don't give the lower bound due to the fact that terms $\frac{|x'|}{\varepsilon+|x'|^{m}}\frac{\rho_{k+1}(n,m;\varepsilon)}{\rho_{2}(n,m;\varepsilon)}$ and $\frac{|x'|^{k}}{\varepsilon+|x'|^{m}}$ simultaneously attain the greatest blow-up rate at $|x'|=\sqrt[m]{\varepsilon}$. Similarly, if conditions (\ref{KHMDC1}) and (\ref{PDSQ131}) hold in Theorem \ref{AAA666666} for $k=m-n$, $m>n+1$, there also appears the same phenomenon as before for terms $\frac{|x'|}{\varepsilon+|x'|^{m}}\frac{1}{\rho_{2}(n,m;\varepsilon)}$ and $\frac{|x'|^{k}}{\varepsilon+|x'|^{m}}$.
\end{remark}

\section{Appendix:\,The proof of Theorem \ref{thm86}}

Recalling assumptions ({\bf{\em H1}}) and ({\bf{\em H2}}), we obtain
\begin{align*}
\frac{1}{C}(\varepsilon+d^{m}(x'))\leq \delta(x')\leq C(\varepsilon+d^{m}(x')).
\end{align*}
A direct calculation gives that for $i=1,2,\cdots,n$, $j=1,\cdots,n-1$, $x\in\Omega_{2R}$,
\begin{align}
|\partial_{j}\tilde{v}_{i}|\leq&\frac{C|\psi^{i}(x',\varepsilon+h_{1}(x'))|}{\sqrt[m]{\varepsilon+d^{m}(x')}}+C\|\nabla\psi^{i}\|_{L^{\infty}(\partial D_{1})},\label{QQ.101}\\
|\partial_{n}\tilde{v}_{i}|=&\frac{|\psi^{i}(x',\varepsilon+h_{1}(x'))|}{\delta(x')},\quad\partial_{nn}\tilde{v}_{i}=0,\label{QQ.102}
\end{align}
and
\begin{align}
|\mathcal{L}_{\lambda,\mu}\tilde{v}_{i}|\leq&C|\nabla^2\tilde{v}_{i}|\notag\\
\leq&\frac{C|\psi^{i}(x',\varepsilon+h_{1}(x'))|}{(\varepsilon+d^{m}(x'))^{1+\frac{1}{m}}}+\frac{C\|\nabla\psi^{i}\|_{L^{\infty}(\partial D_{1})}}{\varepsilon+d^{m}(x')}+C\|\nabla^{2}\psi^{i}\|_{L^{\infty}(\partial D_{1})}.\label{QQ.103}
\end{align}
Here and throughout this section, in order to simplify notations, we use $\|\nabla\psi^{i}\|_{L^{\infty}}$, $\|\nabla^{2}\psi^{i}\|_{L^{\infty}}$ and $\|\psi\|_{C^{2}}$ to denote $\|\nabla\psi^{i}\|_{L^{\infty}(\partial{D}_{1})}$, $\|\nabla^{2}\psi^{i}\|_{L^{\infty}(\partial{D_1})}$ and $\|\psi^{i}\|_{C^{2}(\partial D_{1})}$, respectively.

Define
\begin{equation}\label{def_w}
w_i:=v_i-\tilde{v}_i,\qquad i=1,2,\cdots,n.
\end{equation}

\noindent{\bf STEP 1.}
Let $v_i\in H^1(\Omega; \mathbb{R}^{n})$ be a weak solution of (\ref{P2.010}). Then
\begin{align}\label{lem2.2equ}
\int_{\Omega}|\nabla w_i|^2dx\leq C\|\psi^{i}\|_{C^{2}(\partial D_{1})}^{2},\qquad\,i=1,2,\cdots,n.
\end{align}

For simplicity, we denote
$$w:=w_{i},\quad\mbox{and}\quad \tilde{v}:=\tilde{v}_{i}.$$
Then, $w$ satisfies
\begin{align}\label{eq2.6}
\begin{cases}
  \mathcal{L}_{\lambda,\mu}w=-\mathcal{L}_{\lambda,\mu}\tilde{v},&
\hbox{in}\  \Omega,  \\
w=0, \quad&\hbox{on} \ \partial\Omega.
\end{cases}
\end{align}
Multiplying the equation in (\ref{eq2.6}) by $w$ and integrating by parts, we have
\begin{align}\label{integrationbypart}
\int_{\Omega}\left(\mathbb{C}^0e(w),e(w)\right)dx=\int_{\Omega}w\left(\mathcal{L}_{\lambda,\mu}\tilde{v}\right)dx.
\end{align}

Using the Poincar\'{e} inequality, we have
\begin{align}\label{poincare_inequality}
\|w\|_{L^2(\Omega\setminus\Omega_R)}\leq\,C\|\nabla w\|_{L^2(\Omega\setminus\Omega_R)},
\end{align}
while, in light of the Sobolev trace embedding theorem,
\begin{align}\label{trace}
\int\limits_{\scriptstyle |x'|={R},\atop\scriptstyle
h(x')<x_{n}<{\varepsilon}+h_1(x')\hfill}|w|\,dx\leq\ C \left(\int_{\Omega\setminus\Omega_{R}}|\nabla w|^2dx\right)^{\frac{1}{2}},
\end{align}
where the constant $C$ is independent of $\varepsilon$. Utilizing (\ref{QQ.101}), we get
\begin{align}
\int_{\Omega_{R}}|\nabla_{x'}\tilde{v}|^2dx\leq& C\int_{|x'|<R}\delta(x')\left(\frac{|\psi^{i}(x',\varepsilon+h_{1}(x'))|^{2}}{(\varepsilon+d^m(x'))^\frac{2}{m}}+\|\nabla\psi^{i}\|_{L^{\infty}}^{2}\right)dx'\notag\\
\leq&C\|\psi^{i}\|_{C^{1}(\partial{D}_{1})}^{2}.\label{nablax'tildeu}
\end{align}

Combining with \eqref{Le2.012}--\eqref{Le2.01222}, \eqref{poincare_inequality} and the first Korn's inequality, we deduce that
\begin{align}
\int_{\Omega}|\nabla w|^2dx\leq &\,2\int_{\Omega}|e(w)|^2dx\nonumber\\
\leq&\,C\left|\int_{\Omega_R}w(\mathcal{L}_{\lambda,\mu}\tilde{v})dx\right|+C\left|\int_{\Omega\setminus\Omega_R}w(\mathcal{L}_{\lambda,\mu}\tilde{v})dx\right|\nonumber\\
\leq&\,C\left|\int_{\Omega_R}w(\mathcal{L}_{\lambda,\mu}\tilde{v})dx\right|+C\|\psi^{i}\|_{C^2(\partial{D}_{1})}\int_{\Omega\setminus\Omega_R}|w|dx\nonumber\\
\leq&\,C\left|\int_{\Omega_R}w(\mathcal{L}_{\lambda,\mu}\tilde{v})dx\right|+C\|\psi^{i}\|_{C^2(\partial{D}_{1})}\|\nabla w\|_{L^{2}(\Omega\setminus\Omega_{R})},\nonumber
\end{align}
while, according to (\ref{QQ.102}), (\ref{trace}) and (\ref{nablax'tildeu}),
\begin{align}
&\left|\int_{\Omega_R}w(\mathcal{L}_{\lambda,\mu}\tilde{v})dx\right|
\leq\,C\sum_{k+j<2n}\left|\int_{\Omega_{R}}w\partial_{kj}\tilde{v}dx\right|\nonumber\\
\leq&C\int_{\Omega_{R}}|\nabla w\|\nabla_{x'}\tilde{v}|dx+\int\limits_{\scriptstyle |x'|={R},\atop\scriptstyle
h(x')<x_{n}<\varepsilon+h_1(x')\hfill}C|\nabla_{x'}\tilde{v}\|w|dx\nonumber \\
\leq&C\|\nabla w\|_{L^{2}(\Omega_{R})}\|\nabla_{x'}\tilde{v}\|_{L^{2}(\Omega_{R})}+C\|\psi^{i}\|_{C^1(\partial{D}_{1})}\|\nabla w\|_{L^{2}(\Omega\setminus\Omega_{R})}\nonumber\\
\leq&C\|\psi^{i}\|_{C^{1}(\partial{D}_{1})}\|\nabla w\|_{L^{2}(\Omega)}.\nonumber
\end{align}
Thus,
\begin{align*}
\|\nabla w\|_{L^{2}(\Omega)}\leq  C\|\psi^{i}\|_{C^{2}(\partial{D}_{1})}.
\end{align*}
\noindent{\bf STEP 2.}
Proof of
\begin{align}\label{step2}
 \int_{\Omega_\delta(z')}|\nabla w|^2dx
 &\leq C\delta^{n-\frac{2}{m}}\left(|\psi^{i}(z',\varepsilon+h_{1}(z'))|^2+\delta^{\frac{2}{m}}\|\psi^{i}\|_{C^2(\partial D_1)}^2\right).
\end{align}
As seen in \cite{BJL2017}, we have the iteration formula as follows:
\begin{align*}
\int_{\Omega_{t}(z')}|\nabla w|^{2}dx\leq\frac{C}{(s-t)^{2}}\int_{\Omega_{s}(z')}|w|^{2}dx+C(s-t)^{2}\int_{\Omega_{s}(z')}|\mathcal{L}_{\lambda,\mu}\tilde{v}|^{2}dx.
\end{align*}

We now divide into two cases to get (\ref{step2}).

{\bf Case 1.} If $z'\in\Sigma'_{\sqrt[m]{\varepsilon}}:=\{x'\in B'_{R}\,|\;\mathrm{dist}(x',\Sigma')<\sqrt[m]{\varepsilon}\},\;0<s<\sqrt[m]{\varepsilon},$ we get $\varepsilon\leq\delta(x')\leq C\varepsilon$ in $\Omega_{\sqrt[m]{\varepsilon}}(z')$. Using (\ref{QQ.103}), we obtain
\begin{align}\label{AQ3.037}
&\int_{\Omega_{s}(z')}|\mathcal{L}_{\lambda,\mu}\tilde{v}|^{2}\notag\\
&\leq C|\psi^{i}(z',\varepsilon+h_{1}(z'))|^{2}\frac{s^{n-1}}{\varepsilon^{\frac{m+2}{m}}}+C\|\nabla\psi^{i}\|^{2}_{L^{\infty}}\frac{s^{n-1}}{\varepsilon}+C\varepsilon s^{n-1}\|\nabla^{2}\psi^{i}\|^{2}_{L^{\infty}},
\end{align}
while, in view of the fact that $w=0$ on $\Gamma^{-}_{R}:=\{x\in\mathbb{R}^{n}|\,x_{n}=h(x'),\,|x'|<R\}$,
\begin{align}\label{AQ3.038}
\int_{\Omega_{s}(z')}|w|^{2}\leq C\varepsilon^{2}\int_{\Omega_{s}(z')}|\nabla w|^{2}.
\end{align}
Denote
$$F(t):=\int_{\Omega_{t}(z')}|\nabla w_{1}|^{2}.$$
It follows from (\ref{AQ3.037}) and (\ref{AQ3.038}) that for $0<t<s<\sqrt[m]{\varepsilon}$,
\begin{align}\label{AQ3.039}
F(t)\leq &\left(\frac{c_1\varepsilon}{s-t}\right)^2F(s)+C(s-t)^2s^{n-1}\bigg(\frac{|\psi^{i}(z',\varepsilon+h_{1}(z'))|^2}{\varepsilon^{1+\frac{2}{m}}}+\frac{\|\psi^{i}\|_{C^2}^2}{\varepsilon}\bigg),
\end{align}
where $c_{1}$ and $C$ are universal constants, independent of $|\Sigma'|$.

Take $k=\left[\frac{1}{4c_{1}\sqrt[m]{\varepsilon}}\right]+1$ and $t_{i}=\delta+2c_{1}i\varepsilon,\;i=0,1,2,\cdots,k.$ Then, (\ref{AQ3.039}) together with $s=t_{i+1}$ and $t=t_{i}$ leads to
$$F(t_{i})\leq\frac{1}{4}F(t_{i+1})+C(i+1)^{n-1}\varepsilon^{n-\frac{2}{m}}\left[|\psi^{i}(z',\varepsilon+h_{1}(z'))|^{2}+\varepsilon^{\frac{2}{m}}\|\psi^{i}\|^{2}_{C^{2}}\right].$$
It follows from $k$ iterations and (\ref{lem2.2equ}) that for a sufficiently small $\varepsilon>0$,
\begin{align}\label{AQ3.043}
F(t_{0})\leq C\varepsilon^{n-\frac{2}{m}}\left(|\psi^{i}(z',\varepsilon+h_{1}(z'))|^{2}+\varepsilon^{\frac{2}{m}}\|\psi^{i}\|^{2}_{C^{2}}\right).
\end{align}

{\bf Case 2.} If $z'\in B_{R}'\setminus\Sigma'_{\sqrt[m]{\varepsilon}}$ and $0<s<\frac{2d(z')}{3}$, we have $\frac{d^{m}(z')}{C}\leq\delta(x')\leq Cd^{m}(z')$ in $\Omega_{\frac{2d(z')}{3}}(z')$. Like (\ref{AQ3.037}) and (\ref{AQ3.038}), we have
\begin{align*}
\int_{\Omega_{s}(z')}|\mathcal{L}_{\lambda,\mu}\tilde{v}|^{2}\leq C|\psi^{i}(z',\varepsilon+h_{1}(z'))|^{2}\frac{s^{n-1}}{d^{m+2}(z')}+C\|\psi^{i}\|^{2}_{C^{2}}\frac{s^{n-1}}{d^{m}(z')},
\end{align*}
and
$$\int_{\Omega_{s}(z')}|w|^{2}\leq Cd^{2m}(z')\int_{\Omega_{s}(z')}|\nabla w|^{2}.$$
Further, for $0<t<s<\frac{2d(z')}{3}$, estimate (\ref{AQ3.039}) turns into
\begin{align*}
F(t)\leq\left(\frac{c_{2}d^{m}(z')}{s-t}\right)^{2}F(s)+C(s-t)^{2}s^{n-1}\left(\frac{|\psi^{i}(z',\varepsilon+h_{1}(z'))|^{2}}{d^{m+2}(z')}+\frac{\|\psi^{i}\|_{C^{2}}^{2}}{d^{m}(z')}\right),
\end{align*}
where $c_{2}$ and $C$ are universal constants, independent of $|\Sigma'|$.

Similarly as before, choose $k=\left[\frac{1}{4c_{2}d(z')}\right],\;t_{i}=\delta+2c_{2}id^{m}(z'),\,i=0,1,2,\cdots,k$ and set $s=t_{i+1},\;t=t_{i}$. Then, we have
$$F(t_{i})\leq\frac{1}{4}F(t_{i+1})+C(i+1)^{n-1}d^{mn-2}(z')\left(|\psi^{i}(z',\varepsilon+h_{1}(z'))|^{2}+d^{2}(z')\|\psi^{i}\|^{2}_{C^{2}}\right).$$
Likewise, via $k$ iterations, we obtain
\begin{align}\label{AQ3.046}
F(t_{0})\leq Cd^{mn-2}(z')\left(|\psi^{i}(z',\varepsilon+h_{1}(z'))|^{2}+d^{2}(z')\|\psi^{i}\|^{2}_{C^{2}}\right).
\end{align}

Thus, combining with (\ref{AQ3.043}) and (\ref{AQ3.046}), we obtain (\ref{step2}).

\noindent{\bf STEP 3.}
Proof of that for $i=1,2,\cdots,n$,
\begin{align}\label{AQ3.052}
|\nabla w_{i}(x)|\leq\frac{C|\psi^{i}(x',\varepsilon+h_{1}(x'))|}{\sqrt[m]{\varepsilon+d^{m}(x')}}+C\|\psi^{i}\|_{C^{2}(\partial D_{1})},\quad\mathrm{in}\;\,\Omega_{R}.
\end{align}

As in \cite{BLL2015} and\cite{BJL2017}, utilizing the rescaling argument, Sobolev embedding theorem, $W^{2,p}$ estimate and bootstrap argument, we derive
\begin{align*}
\|\nabla w\|_{L^{\infty}(\Omega_{\delta/2}(z'))}\leq\frac{C}{\delta}\left(\delta^{1-\frac{n}{2}}\|\nabla w\|_{L^{2}(\Omega_{\delta}(z'))}+\delta^{2}\|\mathcal{L}_{\lambda,\mu}\tilde{v}\|_{L^{\infty}(\Omega_{\delta}(z'))}\right).
\end{align*}

{\bf Case 1.} For $z'\in\Sigma'_{\sqrt[m]{\varepsilon}}$. Due to (\ref{QQ.103}), (\ref{step2}) and $\varepsilon\leq\delta(z')\leq C\varepsilon$, we obtain
$$\delta\|\mathcal{L}_{\lambda,\mu}\tilde{v}\|_{L^{\infty}(\Omega_{\delta}(z'))}\leq\frac{C|\psi^{i}(z',\varepsilon+h_{1}(z'))|}{\sqrt[m]{\delta}}+C\|\psi^{i}\|_{C^{2}},$$
and
\begin{align*}
\delta^{-\frac{n}{2}}\|\nabla w\|_{L^{2}(\Omega_{\delta}(z'))}\leq\frac{C|\psi^{i}(z',\varepsilon+h_{1}(z'))|}{\sqrt[m]{\delta}}+C\|\psi^{i}\|_{C^{2}}.
\end{align*}
Consequently, for $h(z')<z_{n}<\varepsilon+h_{1}(z')$,
$$|\nabla w(z',z_{n})|\leq\frac{C|\psi^{i}(z',\varepsilon+h_{1}(z'))|}{\sqrt[m]{\delta}}+C\|\psi^{i}\|_{C^{2}}.$$

{\bf Case 2.} For $z'\in B'_{R}\setminus\Sigma'_{\sqrt[m]{\varepsilon}}$. Similar to {\bf Case 1}, based on the fact that $\frac{d^{m}(z')}{C}\leq\delta(x')\leq Cd^{m}(z')$ in $\Omega_{\delta}(z')$, it follows from (\ref{QQ.103}) and (\ref{step2}) that for $h(z')<z_{n}<\varepsilon+h_{1}(z')$,
$$|\nabla w_{1}(z',z_{n})|\leq\frac{C|\psi^{i}(z',\varepsilon+h_{1}(z'))|}{\sqrt[m]{\delta}}+C\|\psi^{i}\|_{C^{2}}.$$

Therefore, estimate (\ref{AQ3.052}) is established. On the other hand, it follows from the standard interior estimates and boundary estimates for elliptic systems (\ref{P2.008}) (see Agmon et al. \cite{ADN1959} and \cite{ADN1964}) that
\begin{align*}
\|\nabla v\|_{L^{\infty}(\Omega\setminus\Omega_{R})}\leq C\|\psi\|_{C^{2}(\partial D_{1})}.
\end{align*}
Thus, Theorem \ref{thm86} is proved.

\noindent{\bf{\large Acknowledgements.}} H. Li was partially supported by NSFC (11571042, 11631002, 11971061) and BJNSF (1202013). Z. Zhao would like to thank Xia Hao for her helpful suggestions and discussions.

\bibliographystyle{plain}

\def\cprime{$'$}

\end{document}